\documentclass[a4paper,11pt]{amsart} 
\usepackage{latexsym,amscd,amssymb,url}
\usepackage{enumitem}
\usepackage{mathtools}
\usepackage{comment}

\usepackage[letterpaper]{geometry}

\usepackage[all,cmtip]{xy}  
\usepackage{graphicx}
\usepackage{xcolor}
\definecolor{dblue}{rgb}{0,0,.6}
\usepackage[colorlinks=true, linkcolor=dblue, citecolor=dblue, filecolor = dblue, menucolor = dblue, urlcolor = dblue]{hyperref}
\usepackage{tikz}
\usetikzlibrary{cd}

\numberwithin{equation}{section}

\newtheorem{theorem}{Theorem}[section]

\theoremstyle{plain} 

\newtheorem{corollary}[theorem]{Corollary}
\newtheorem{definition}[theorem]{Definition}
\newtheorem{lemma}[theorem]{Lemma}
\newtheorem{proposition}[theorem]{Proposition}

\theoremstyle{definition} 

\newtheorem{example}[theorem]{Example}

\newtheorem{remark}[theorem]{Remark}

\newcommand{\del}{\partial}
\newcommand{\Z}{\mathbb Z}

\newcommand{\N}{\mathbb N}

\newcommand{\rk}{\operatorname{rk}}

\newcommand{\Alb}{\operatorname{Alb}}

\newcommand{\CH}{\operatorname{CH}}

\newcommand{\F}{\mathbb F}

\newcommand{\mkg}{m_{k}(g)}
\newcommand{\mgg}{m_{g-1}(g)}

\newcommand{\dashedlongrightarrow}{\xymatrix@1@=15pt{\ar@{-->}[r]&}}
\renewcommand{\longrightarrow}{\xymatrix@1@=15pt{\ar[r]&}}
\renewcommand{\mapsto}{\xymatrix@1@=15pt{\ar@{|->}[r]&}}
\renewcommand{\twoheadrightarrow}{\xymatrix@1@=15pt{\ar@{->>}[r]&}}
\newcommand{\hooklongrightarrow}{\xymatrix@1@=15pt{\ar@{^(->}[r]&}}
\newcommand{\congpf}{\xymatrix@1@=15pt{\ar[r]^-\sim&}}

\begin{document}   
\title[On the degree of subvarieties on abelian varieties]{On the degree of subvarieties on abelian varieties}


\author[Engel]{Philip Engel}
\address{Department of Mathematics, Statistics, 
and Computer Science, University of Illinois 
Chicago (UIC),
851 S Morgan St, Chicago, IL 60607, USA}
\email{pengel@uic.edu} 

\author[Schreieder]{Stefan Schreieder}
\address{Leibniz University Hannover, 
Institute of Algebraic Geometry, 
Welfengarten 1, 30167 Hannover, Germany.}
\email{schreieder@math.uni-hannover.de}

\date{\today}
\subjclass[2020]{Primary 14C25, 14K12; Secondary 14C30, 05B35}

\keywords{abelian varieties, algebraic cycles, minimal cohomology classes, integral Hodge conjecture, regular matroids, graphic matroids, Albanese graphs}


\maketitle
 
\begin{abstract}  
Let $(X,\Theta)$ be a very general principally polarized abelian variety of dimension $g$, and consider the minimal cohomology class $\theta_k=[\Theta]^k/k!$ for $k<g$.
We show that the minimal positive multiple of $\theta_k$ which is algebraic is divisible by all primes $p\leq (k+1)/2$. 
In particular, these minimal multiples grow exponentially with $k$.
Our main result follows from \cite{EGFS} together with a new combinatorial result about $\F_p$-solutions of certain graphic matroids in their own Albanese graphs. 
\end{abstract}

\section{Introduction} 

We work over the field of complex numbers.
Let $(X,\Theta)$ be a principally polarized abelian variety and consider the minimal class
$$
\theta_k=[\Theta]^k/k!\in H^{2k}(X,\Z)
$$
which is an indivisible Hodge class on $X$.
It was recently shown in \cite{EGFS} that the above class is in general not algebraic if $g\geq 4$ and $2\leq k\leq g-1$.

The purpose of this note is to prove the following quantitative result.
 
\begin{theorem} \label{thm:deg(C)}
Let $(X,\Theta)$ be a very general principally polarized abelian variety of dimension $g$.
Let $Z\subset X$ be a closed subvariety of codimension $1\leq k< g$ and write $[Z]=m\cdot \theta_{k}$ with $m\in \N$.
Then 
$$
p\mid m\qquad \text{for all primes $p\leq (k+1)/2$.}
$$
\end{theorem}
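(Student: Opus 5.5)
Our plan is to reduce, as in \cite{EGFS}, to a degeneration argument and then show that the obstruction appearing there is nonzero mod $p$ by a purely combinatorial construction. First we reduce to curves. If $Z$ has codimension $k$ and $[Z]=m\theta_k$, we intersect with a general translate of a cycle representing $\theta_{g-k-1}$-type classes, or use the Fourier/Pontryagin calculus on $H^*(X,\Z)$, to produce an algebraic curve class that is a multiple of $\theta_{g-1}$ by an amount divisible by $m$ up to a controlled factor. The precise reduction in \cite{EGFS} relates algebraicity of $m\theta_k$ on a very general $g$-dimensional ppav to algebraicity of $m\theta_{g'-1}$ on a very general ppav of dimension $g'=k+1$, the minimal curve class. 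So it suffices to show: if $m\theta_{g-1}$ is algebraic on a very general ppav of dimension $g$, then $p\mid m$ for every prime $p\le g/2$.

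Next we degenerate. Following \cite{EGFS}, we let $X$ degenerate to a maximally degenerate ppav in a toroidal compactification attached to a regular matroid $R$ of rank $g$, for instance the graphic matroid of a graph $G$ with $b_1(G)=g$. A family of curves with class $m\theta_{g-1}$ specializes to a $1$-cycle on the central fiber, and tropicalization yields a map $\Gamma\to \mathbb R^g/\Z^g$ from a tropical curve with balanced integral edge directions. The key input we take from \cite{EGFS} is that the tropicalized class must equal $m$ times the tropical minimal class, which is represented by the Albanese (Abel--Jacobi) image of $G$ itself; equivalently $m$ times the matroid $R$ must be realized by the edge vectors of a graph mapping into its Albanese torus. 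Reducing mod $p$, an algebraic curve class with $p\nmid m$ would yield a nonzero ``$\F_p$-solution'' of the matroid $R$ inside the Albanese graph of the curve.

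The main step, and the expected obstacle, is the combinatorial theorem: for every prime $p\le g/2$ exhibit a graphic matroid $R$ of rank $g$ admitting no $\F_p$-solution in any Albanese graph, i.e.\ no graph $\Gamma$ whose cycle space mod $p$ maps onto the configuration of $R$ with the required multiplicities. We would try $G$ built from $p$ parallel-ish blocks, e.g.\ a graph whose cographic structure forces every realizing edge vector to appear with multiplicity divisible by $p$ (such as a chain of $K_4$'s or a "necklace" of length related to $p$, using $g\ge 2p$ to have enough room). The argument would analyze the circuits of $\Gamma$ mapping to circuits of $R$: each circuit of $R$ lifts to a cycle of $\Gamma$, and a counting of the total weights over fundamental cycles mod $p$ should give a contradiction unless $m\equiv 0 \pmod p$. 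Finally, since the statement is for all primes $p\le (k+1)/2$ simultaneously and each gives a separate degeneration, we combine them to conclude $p\mid m$ for each such $p$.
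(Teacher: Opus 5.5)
\textbf{There is a genuine gap.} Your overall architecture is the paper's: reduce to curves, degenerate along a matroidal family, and apply the criterion of \cite{EGFS}. But the combinatorial step, which carries all of the new content, is missing, and the version you state is not the right one.

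First, the criterion (Theorem~\ref{thm:EGFS}) is not that $\underline R$ has ``no $\F_p$-solution in any Albanese graph''. The zero solution always exists, and solutions with zero color profile are harmless. What is required is that every $\F_p$-solution of $\underline R$ in its own mod $p$ Albanese graph $\Alb_p(\underline R)$ with constant color profile $(c,\dots,c)$ has $c=0$. Here $\Alb_p(\underline R)$ is the Cayley graph of $\F_p^S/U_{\F_p}$, not a graph attached to the curve.

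Second, the graphs you propose (chains of $K_4$'s, necklaces) are planar, so their graphic matroids are cographic. Such degenerations are realized by families of Jacobians, on which $\theta_{g-1}$ is algebraic. So the required statement is false for these graphs, and no counting over fundamental cycles can succeed.

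The paper instead uses the non-planar graph $K_{2p+1}$ (rank $2p$), together with a mechanism your proposal lacks:
\begin{itemize}
\item By the adjointness $\langle \Delta_e f,h\rangle=\langle f,\Delta_{-e}h\rangle$, any test functions $g_v\colon W=H^1(G,\F_p)\to\F_p$ with $\Delta_{e_s}(g_b-g_a)$ constant, equal to $\mu_s$, for each edge $s=[a\to b]$ force $\sum_s\mu_s\lambda_s(b_s)=0$.
\item Linear $g_v$ only realize $\mu$ with $\sum_s\mu_s=0$, which merely shows the profile is constant. So higher degree is needed.
\item The key choice is $g_0=\sigma_p(z_1,\dots,z_{2p-1})$. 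It satisfies $\sigma_p(z_1+1,\dots,z_{2p-1}+1)-\sigma_p(z_1,\dots,z_{2p-1})=1$ in $\F_p$, because $\binom{2p-1}{p}\equiv 1$ and $\binom{2p-1-j}{p-j}\equiv 0$ for $1\le j\le p-1$.
\item Together with $g_{2p}=0$ and $g_i=\sum_j x_{ij}c_{ij}$, where the $c_{ij}$ are skew-symmetric and built from a skew-symmetric $p\times p$ matrix with all row sums $1$, this isolates a single edge. By symmetry it therefore kills the whole color profile.
\end{itemize}

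Your reduction to curves is also unsound as stated, although the target $g'=k+1$ is right. The class $\theta_{g-k-1}$ is in general not algebraic. Even intersecting with $[\Theta]^{g-k-1}$ turns $m\theta_k$ into $m\,(g-k-1)!\binom{g-1}{k}\theta_{g-1}$, and the extra factor can be divisible by $p$, which destroys exactly the information you need.

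The paper avoids this as follows. It specializes $X$ to $Y\times E$ with $Y$ very general of dimension $g-1$ and pulls back to $Y$; this preserves both $\theta_k$ and the multiplier $m$. Repeating this reaches $g=k+1$. In the resulting curve case it pushes forward along $Y\times E\to Y$, which again preserves $m$, to descend to $g=2p$, where $M(K_{2p+1})$ applies.
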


Since $\theta_k\theta_{g-k}=\binom{g}{k}$, the above theorem is equivalent to saying that the intersection number $Z\cdot \theta_{g-k}$ is divisible by the product 
of $\binom{g}{k}$ with all primes less than or equal to $\frac{k+1}{2}$.

For positive integers $k,g$ with $k<g$, we define the positive integer
$$
 \mkg 
\coloneq \min\left\{ m\in \Z_{\geq 1}\mid m\cdot \theta_k \textrm{ is algebraic for very general
$(X,\Theta)\in \mathcal A_g$}  \right\}.
$$
In particular, $N_g\coloneq \mgg   $ is the smallest positive multiple of the minimal curve class that is algebraic on a very general 
$(X,\Theta)\in \mathcal A_g$, cf.~\cite{EGFS-A6}.
Since $k!\cdot \theta_k=[\Theta^k]$ is algebraic, an upper bound on $\mkg $ is given by the condition
$$
\mkg  \mid k! .
$$ 
In \cite{EGFS,EGFS-A6} it has been shown that $2\mid \mkg $ for all $g\geq 4$ and $2\leq k\leq g-1$; moreover, $m_3(4)=m_4(5)=2$, $m_5(6)=6$, and $ 6\mid \mkg $ for $5\leq k<g$.
Unboundedness of $\mkg $ remained open.
By linearity, Theorem \ref{thm:deg(C)} also applies to algebraic cycles.
We thus obtain the following corollary, where we recall Chebyshev's theta function:
$$
\vartheta(x)\coloneq \sum_{p\leq x} \log(p), \qquad \text{where the sum runs through all primes $p\leq x$.}
$$

\begin{corollary} \label{cor:thm:deg(C)}
Let $k<g$ be positive integers.
Then  
$$ 
e^{\vartheta\left(\frac{k+1}{2}\right)} \mid \mkg  . 
$$ 
\end{corollary}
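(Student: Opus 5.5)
The corollary follows formally from Theorem \ref{thm:deg(C)} by linearity, so the plan is a short reduction. By definition of $\mkg$, on a very general $(X,\Theta)\in\mathcal A_g$ the class $\mkg\cdot\theta_k$ is algebraic. That is, there is an algebraic cycle
$$
\Gamma=\sum_i a_i Z_i,\qquad a_i\in\Z,
$$
where the $Z_i\subset X$ are closed subvarieties of codimension $k$, such that $[\Gamma]=\mkg\cdot\theta_k$.

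First I will check that each $[Z_i]$ is an integer multiple of $\theta_k$. For $(X,\Theta)$ very general, the Hodge classes in $H^{2k}(X,\mathbb Q)$ are exactly $\mathbb Q\cdot[\Theta]^k$; this is the standard Mumford--Tate argument, since the Mumford--Tate group is the full symplectic group. The class $[Z_i]$ is therefore a rational multiple of $\theta_k$. It is integral, and $\theta_k$ is indivisible in $H^{2k}(X,\Z)$ (this follows from $\theta_k\theta_{g-k}=\binom gk$ together with the known primitivity of $\theta_k$, as recalled in the introduction). Hence $[Z_i]=m_i\theta_k$ with $m_i\in\Z$, and in fact $m_i\geq 1$ because $Z_i\cdot\theta_{g-k}>0$ by ampleness.

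Next, since $1\le k<g$, Theorem \ref{thm:deg(C)} applies to each $Z_i$ and shows that every prime $p\le (k+1)/2$ divides every $m_i$. Then
$$
\mkg=\sum_i a_i m_i
$$
is divisible by each such $p$. These primes are distinct, so their product
$$
\prod_{p\le (k+1)/2}p=e^{\vartheta\left(\frac{k+1}{2}\right)}
$$
divides $\mkg$, which is the claim.

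The only point requiring care is that the components $Z_i$ carry integral multiples of $\theta_k$. This is the very-general Hodge-theoretic input combined with the indivisibility of $\theta_k$, which the paper treats as known. Everything else is elementary number theory.
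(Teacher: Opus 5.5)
Your proposal is correct and matches the paper's argument: the paper also obtains the corollary directly from Theorem~\ref{thm:deg(C)} by linearity, writing an algebraic representative of $\mkg\cdot\theta_k$ as a $\Z$-combination of codimension-$k$ subvarieties and using that each of these has class $m_i\theta_k$ with $m_i$ divisible by every prime $p\le (k+1)/2$. One small point: the indivisibility of $\theta_k$ does not follow from $\theta_k\theta_{g-k}=\binom gk$. It is simply a known fact, and the theorem's own setup already provides $[Z_i]\in\Z\cdot\theta_k$, so this does not affect your argument.
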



By (a version of) the prime number theorem, we have
$ 
\vartheta(x)=x+o(x),
$ 
see e.g.~\cite[Theorem 4.4]{Apostol}.
Using more recent results of Dusart \cite{Dusart}, one can also give effective lower bounds on $\vartheta(x)$ and hence on $\mkg $ as follows (see Corollary \ref{cor:Dusart}):
\begin{align} \label{eq:exponential-1}
\mkg >\exp\!\left(
\frac {k+1}{2} \left(1-\frac{151.3}{\left(\log\left(\frac{k+1}{2}\right)\right)^4}\right) \right) \qquad \text{for all $3\leq k<g$}.
\end{align}
Using a similar estimate of Dusart \cite{Dusart}, we obtain the following simpler (but weaker) bound:
\begin{align} \label{eq:exponential-2}
\mkg >
e^{\frac{k+1}{3}}
\qquad \text{for all $33 \leq k<g$.}
\end{align}

In \cite{debarre-1,debarre-2}, Debarre studied the minimal multiple $e_{g-1}(g)$ of the minimal curve class $\theta_{g-1}$ which is represented by an effective curve for all $(X,\Theta)\in \mathcal A_g$.
In \cite[Section 6]{debarre-2}, Debarre shows that his previous work \cite{debarre-1} together with work of Pirola \cite{pirola} and Pareschi--Popa \cite{pareschi-popa} implies  
$$
e_{g-1}(g)\geq \sqrt{g-1}-\frac{1}{2}.
$$
Clearly, $\mgg    \mid e_{g-1}(g)$.
Therefore, Theorem \ref{thm:deg(C)} and Corollary \ref{cor:thm:deg(C)} 
imply that $e_{g-1}(g)$ is divisible by all primes $p\leq g/2$, 
and so $e_{g-1}(g)$ grows exponentially in $g$:

\begin{corollary} \label{cor:Debarre}
We have 
$$
e_{g-1}(g)\geq \mgg   \geq e^{\vartheta(\frac{g}{2})}=e^{\frac{g}{2}+o(\frac{g}{2})} .
$$
\end{corollary}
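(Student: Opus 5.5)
The plan is to chain three facts: the divisibility $\mgg\mid e_{g-1}(g)$, Corollary \ref{cor:thm:deg(C)} specialised to $k=g-1$, and the prime number theorem. Throughout, $g\geq 2$, since $m_{g-1}(g)$ requires $1\leq k=g-1<g$.

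\emph{First inequality.} The set $\{m\in\Z\mid m\cdot\theta_{g-1}$ is algebraic on a very general $(X,\Theta)\in\mathcal A_g\}$ is a subgroup of $\Z$, because algebraic classes are closed under sums and negatives. It is nonzero since it contains $(g-1)!$, so it is generated by its least positive element, which is $\mgg$. By definition of $e_{g-1}(g)$, the class $e_{g-1}(g)\cdot\theta_{g-1}$ is represented by an effective curve on every $(X,\Theta)\in\mathcal A_g$. In particular it is algebraic on a very general one, so $e_{g-1}(g)$ lies in this subgroup. Hence $\mgg\mid e_{g-1}(g)$, and since both are positive integers, $e_{g-1}(g)\geq\mgg$.

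\emph{Second inequality.} Apply Corollary \ref{cor:thm:deg(C)} with $k=g-1$, which is allowed since $k<g$. Then $(k+1)/2=g/2$, so $e^{\vartheta(g/2)}\mid\mgg$. Here $e^{\vartheta(g/2)}=\prod_{p\leq g/2}p$ is a positive integer, equal to $1$ when $g<4$. A positive integer divisible by another positive integer is at least as large, so $\mgg\geq e^{\vartheta(g/2)}$.

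\emph{Asymptotics.} The equality $e^{\vartheta(g/2)}=e^{\frac g2+o(\frac g2)}$ is the version of the prime number theorem $\vartheta(x)=x+o(x)$ quoted above from \cite[Theorem 4.4]{Apostol}, applied with $x=g/2\to\infty$.

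There is no real obstacle: all the substance lies in Theorem \ref{thm:deg(C)}, through Corollary \ref{cor:thm:deg(C)}. The only point that needs care is justifying that $\mgg$ divides $e_{g-1}(g)$ and does not merely bound it from below. This rests on the subgroup argument and on the fact that the effectivity condition in the definition of $e_{g-1}(g)$ holds in particular on very general $(X,\Theta)$.
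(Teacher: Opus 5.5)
Your proposal is correct and follows the paper's own argument. The paper likewise uses $m_{g-1}(g)\mid e_{g-1}(g)$, applies Corollary \ref{cor:thm:deg(C)} with $k=g-1$ (so that $(k+1)/2=g/2$), and invokes $\vartheta(x)=x+o(x)$; your subgroup argument spells out the divisibility $m_{g-1}(g)\mid e_{g-1}(g)$, which the paper simply states as clear.
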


Our approach relies on \cite{EGFS,EGFS-A6}.
To prove Theorem \ref{thm:deg(C)}, 
we first deal with the case where $Z\subset X$ is a curve; results for higher codimension $k$ cycles follow via suitable degeneration arguments. 
In \cite{EGFS}, divisibility constraints 
(and hence lower bounds)
for algebraic multiples 
of the minimal curve class 
were reduced to a purely combinatorial problem for regular matroids of rank $g$, which we explain next.

Let $\underline R$ be a regular matroid on the ground set $S$ and with integral realization $S\to U^\ast$, where $U$ denotes some free $\Z$-module of rank $g = \rk(\underline R)$.
Dualizing the integral realization, we obtain an embedding $U\hookrightarrow \Z^S$.
Associated to this, we defined in \cite{EGFS} the oriented, $S$-colored mod $p$ Albanese graph
$$
\Alb_p(\underline R)\coloneq \Alb_{p,1}(\underline R) ,
$$
as the Cayley graph of $W\coloneq \F_p^S/U_{\F_p}$ with respect to the generators given by the standard basis of $\F_p^S$.
 In other words, $\Alb_p(\underline R)$ has as vertices the set $W$, and the vertices $w$ and $w+\bar e_s$ are joined by a positively oriented edge of color $s$, where $e_s\in \F_p^S$ denotes the $s$-th basis vector.

\begin{definition} An {\rm $\F_p$-solution of $\underline R$ in $\Alb_p(\underline R)$} is a collection of $1$-chains $b_s\in C_1(\Alb_p(\underline R),\F_p)$ of color $s$, i.e.~a $\F_p$-linear combination of oriented edges of color $s$, such that the following holds.
If $c_s\in \F_p$ are such that $\sum_{s\in S} c_se_s\in U_{\F_{p}}\subset \F_p^S$, then the $1$-chain $\sum_{s\in S} c_sb_s$ is closed:
$\del \sum_{s\in S} c_sb_s=0 $.
\end{definition}

Since $\Alb_p(\underline R)$ is oriented, we can associate to each $1$-chain $b_s$ a number $\lambda_s(b_s)\in \F_p$, given by the sum of the coefficients of $b_s$, when written as a $\F_p$-linear combination of positively oriented edges of color $s$.
The color profile of $b=(b_s)_{s\in S}$ is then the vector $\lambda(b)=(\lambda_s(b_s))\in \F_p^S$.

The main result in \cite{EGFS} reduced Theorem \ref{thm:deg(C)} to the following purely combinatorial statement, which is the technical main result of this paper.

\begin{theorem} \label{thm:K_2p+1}
Let $p$ be a prime number and let $\underline R=M(K_{2p+1})$ be the graphic matroid associated to the complete graph $K_{2p+1}$ with edge set $S$.
Then any $\F_p$-solution $(b_s)_{s\in S}$ of $\underline R$ in the mod $p$ Albanese graph $\Alb_p(\underline R)$ of $\underline R$ has trivial color profile $\lambda((b_s)_{s\in S})=0\in \F_p^S$. 
\end{theorem}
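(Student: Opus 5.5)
The plan is to translate the closedness condition into linear constraints on the $b_s$, and then test those constraints against suitable $0$-cochains on $\Alb_p(\underline R)$ to extract linear relations among the numbers $\lambda_s(b_s)$.

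\emph{Translating the condition.} Write $G=K_{2p+1}$ with vertex set $V$, and fix an orientation of each edge $s\in S$. For the graphic matroid the dual realization $U\hookrightarrow \Z^S$ is the cut lattice, namely the image of the coboundary $\delta\colon \Z^V\to \Z^S$. Hence $W=\F_p^S/\delta(\F_p^V)$. By linearity, the defining condition of an $\F_p$-solution is equivalent to the local conditions
$$\del\Big(\sum_{s\ni v}\epsilon_{v,s}\, b_s\Big)=0 \qquad\text{for every vertex } v\in V,$$
where $\epsilon_{v,s}=\pm1$ according to whether $s$ points away from or into $v$.

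\emph{Extracting relations.} Pair the local conditions with arbitrary functions $h_v\colon W\to \F_p$ and sum over $v$. This gives
$$\sum_{s=(u\to v)}\big\langle b_s,\ (\delta h_u-\delta h_v)|_{\text{color } s}\big\rangle=0 .$$
Suppose the $h_v$ are chosen so that, for every edge $s=(u\to v)$, the function
$$w\mapsto h_u(w+\bar e_s)-h_u(w)-h_v(w+\bar e_s)+h_v(w)$$
is a constant $a_s\in\F_p$. Then the pairing above equals $\sum_s a_s\lambda_s(b_s)$, so we obtain the relation $\sum_s a_s\lambda_s=0$.

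The theorem therefore reduces to showing that the admissible vectors $a=(a_s)_{s\in S}$ span $\F_p^S$. Two tools help here.
\begin{enumerate}
\item[(i)] $\Alb_p$ is functorial, so automorphisms of $K_{2p+1}$ act on the whole situation. It thus suffices to produce admissible vectors $a$ whose $\mathfrak S_{2p+1}$-orbit spans $\F_p^S$. Since $\F_p^S$ has few irreducible constituents as an $\mathfrak S_{2p+1}$-module, a handful of well-chosen $a$ should suffice.
\item[(ii)] Affine-linear $h_v$ (characters of $W$ composed with the identity) only produce $a\in\delta(\F_p^V)$, the trivial relations. Genuinely new relations must come from higher-degree polynomial functions on $W$.
\end{enumerate}

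\emph{Constructing nonlinear $h_v$.} For these I will write $W\cong H^1(G,\F_p)$ in cycle coordinates: a class $w$ is evaluated on cycles of $G$. I will look for $h_v$ that are polynomials in these cycle coordinates. The shape I would try is
$$h_v(w)=\sum_{\sigma}\binom{w(\gamma_\sigma)}{p-1}\cdot(\text{linear term}),$$
where $\gamma_\sigma$ runs over cycles through $v$. The point of this shape is twofold. The second difference along an edge $s$ of a degree-$d$ polynomial has degree $d-2$, and the binomial identities over $\F_p$, such as $\sum_{x\in\F_p}x^{p-1}=-1$, can force the required constancy.

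The count $2p+1=|V|$ should enter exactly here. One wants a family of $p$ edge-disjoint Hamiltonian-type cycles, or a decomposition of $K_{2p+1}$ into $p$ Hamiltonian cycles (Walecki). Along each such cycle the contributions of its $2p+1$ edges, each an $\F_p$-valued coordinate, can be made to telescope so that $\delta h_u-\delta h_v$ becomes constant on every color. This is also where a smaller complete graph would fail, which is consistent with the prime bound $p\le (k+1)/2$.

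\emph{Main obstacle.} I expect the main difficulty to be the explicit construction of the nonlinear functions $h_v$, that is, verifying that the mixed second differences really are constant on every color class and that the resulting vector $a$ is not a coboundary. Once a single nontrivial admissible $a$ is found, say with $a_{s_0}\neq0$ and $a$ not in $\delta(\F_p^V)$, the symmetry argument in (i) should yield $\lambda=0$. If the spanning fails because of an invariant subspace, I would instead rule out the remaining solutions directly: I would use the relation $\lambda\in\delta(\F_p^V)^\perp$, which comes from evaluating along cycles.
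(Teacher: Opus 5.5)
Your framework is the paper's Section~2: pair the closedness conditions with test functions on $W$, and whenever $\Delta_{e_s}(h_u-h_v)$ is a constant $a_s$ on every color you get $\sum_s a_s\lambda_s(b_s)=0$. But the proposal stops where the theorem starts. You never write down a nonlinear family $(h_v)$ and verify constancy, and that construction is the whole content of the result.

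The shape you suggest is unchecked: binomials $\binom{w(\gamma)}{p-1}$ times unspecified linear terms, Walecki decompositions, telescoping, and $\sum_{x\in\F_p}x^{p-1}=-1$. The last ingredient is also the wrong kind of identity, since constancy of $\Delta_{e_s}(h_u-h_v)$ is a pointwise polynomial identity, not a sum over $\F_p$.

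What works in the paper is the following.
\begin{itemize}
\item Take the star spanning tree at $0$, coordinates $x_{ij}$ dual to $e_{ij}=[i\to j]$ for $1\le i<j\le 2p$, and put $z_i=x_{i,2p}$.
\item Since $e_{0,2p}=-\sum_{i=1}^{2p-1}e_{i,2p}$ in $W$, the operator $\Delta_{e_{0,2p}}$ shifts all $2p-1$ variables $z_i$ by $-1$.
\item Then $g_{2p}=0$ and $g_0=\sigma_p(z_1,\dots,z_{2p-1})$ give $\Delta_{e_{0,2p}}(g_{2p}-g_0)=1$. This holds because $\sigma_p(z_1+1,\dots,z_{2p-1}+1)-\sigma_p(z_1,\dots,z_{2p-1})=1$ in $\F_p$, as $\binom{2p-1}{p}\equiv 1$ and $\binom{2p-1-j}{p-j}\equiv 0$ for $1\le j\le p-1$. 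This congruence, not a Hamiltonian decomposition, is where $2p+1$ enters.
\item The remaining edges are handled by $g_i=\sum_j x_{ij}c_{ij}$, where $c_{ij}=-c_{ji}$, $c_{ij}$ is independent of $z_i$ and $z_j$, and $\sum_j c_{ij}=\sigma_{p-1}(z_1,\dots,\widehat{z_i},\dots,z_{2p-1})$.
\item These $c_{ij}$ are built coefficientwise from a skew-symmetric $p\times p$ matrix over $\F_p$ with all row sums $1$, which exists because $-(p-1)=1$.
\end{itemize}
The resulting $a$ is the indicator of the single edge $[0\to 2p]$, and relabelling gives $\lambda=0$.

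Your symmetry step also contains an error that would make it fail even after you found some nonlinear $a$. Claim (ii) is false. Take $h_{v_0}$ to be evaluation on a triangle through $v_0$ and $h_v=0$ for $v\neq v_0$. Then $a$ has entries $1$ and $-1$ at two edges through $v_0$ and $0$ elsewhere. A cut vector of $K_n$ supported on the star of $v_0$ is a multiple of $\delta_{v_0}$, so this $a$ is not a cut vector for $n\ge 4$. In fact linear test functions realize every $a$ with $\sum_s a_s=0$, and this already forces $\lambda$ to be constant.

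Hence your criterion, that one admissible $a$ with $a_{s_0}\neq 0$ and $a\notin\delta(\F_p^V)$ suffices by $\mathfrak S_{2p+1}$-symmetry, is wrong. The sum-zero hyperplane is an invariant proper subspace containing many such vectors. What you need is a single admissible $a$ with $\sum_s a_s\neq 0$, and only nonlinear $h_v$ can provide it.

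Your fallback relation $\lambda\in\delta(\F_p^V)^\perp$ is not a consequence of closedness either. For the one-edge graph $K_2$ one has $W=0$ and every $\lambda_s$ occurs, but $\delta(\F_p^V)^\perp=0$. What closedness does give, by applying $[w]\mapsto w$ to $\del\sum_s c_sb_s=0$, is $(c_s\lambda_s)_s\in U_{\F_p}$ for all $c\in U_{\F_p}$. For $K_{2p+1}$ this again only yields that $\lambda$ is constant.
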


In \cite[Definitions 8.3 and 8.9]{EGFS}, we defined the radical distance $d(\underline R)$ of a regular matroid $\underline R$ to the class of cographic matroids as the product of all primes $p$ such that any $\F_p$-solution of $\underline R$ in $\Alb_p(\underline R)$ with constant color profile has trivial color profile.
Theorem \ref{thm:K_2p+1} implies the following, which solves \cite[Problem 8.13]{EGFS} and partially answers \cite[Problem 8.14]{EGFS}.

\begin{corollary} \label{cor:d(R)}
The radical distance  $d(M(K_{n}))$ of the complete graph on $n$ vertices is divisible 
by all primes $p\leq (n-1)/2$.
In particular, $d(M(K_n))\geq 
e^{\vartheta(\frac{n-1}{2})}$ grows 
at least exponentially in $n$.
\end{corollary}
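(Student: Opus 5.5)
The plan is to deduce the corollary from Theorem \ref{thm:K_2p+1} by a minor-monotonicity argument. By definition, $p \mid d(\underline R)$ exactly when every $\F_p$-solution of $\underline R$ in $\Alb_p(\underline R)$ with constant color profile has trivial color profile. Fix a prime $p\leq (n-1)/2$. Then $2p+1\leq n$, so $K_{2p+1}$ is a subgraph of $K_n$. Equivalently, $\underline R'=M(K_{2p+1})$ is obtained from $\underline R=M(K_n)$ by deleting the set $T=S\setminus S'$, where $S'$ is the edge set of $K_{2p+1}$.

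\textbf{Step 1: the collapsing map.} I will construct a map $\Alb_p(\underline R)\to\Alb_p(\underline R')$ under which solutions push forward to solutions. Under deletion, the lattice $U'\subset \Z^{S'}$ attached to $\underline R'$ is the image of $U\subset\Z^S$ under the coordinate projection $\pi\colon\Z^S\to\Z^{S'}$. Dually, this says that the realization $S'\to U'^\ast$ is the restriction of $S\to U^\ast$ followed by passing to the saturated span. I need to verify this with the paper's conventions, or quote the corresponding minor statement from \cite{EGFS}.

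Granting this, $\pi$ induces a surjection $W=\F_p^S/U_{\F_p}\to W'=\F_p^{S'}/U'_{\F_p}$. This gives a cellular map of Cayley graphs $f\colon\Alb_p(\underline R)\to\Alb_p(\underline R')$ with the following properties:
\begin{itemize}
\item an edge of color $s\in S'$ is sent to an edge of the same color and orientation;
\item an edge of color $t\in T$ is sent to a vertex.
\end{itemize}

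\textbf{Step 2: pushforward of solutions.} Given an $\F_p$-solution $(b_s)_{s\in S}$ of $\underline R$, set $b'_s\coloneq f_\ast b_s$ for $s\in S'$. Since $f$ preserves orientation on edges of color in $S'$, we get $\lambda_s(b'_s)=\lambda_s(b_s)$.

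To check that $(b'_s)_{s\in S'}$ is a solution, let $c'\in U'_{\F_p}$. By Step 1 it lifts to some $c\in U_{\F_p}$ with $\pi(c)=c'$. Then $\sum_{s\in S} c_s b_s$ is closed, so its pushforward is closed. The pushforwards $f_\ast b_t$ for $t\in T$ vanish as $1$-chains, because degenerate edges are zero in cellular chains. Hence
$$
\textstyle\sum_{s\in S'} c'_s b'_s = f_\ast\left(\sum_{s\in S} c_s b_s\right)
$$
is closed, and $(b'_s)_{s\in S'}$ is an $\F_p$-solution of $\underline R'$.

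\textbf{Step 3: conclusion.} If $(b_s)_{s\in S}$ has constant color profile $\lambda\equiv a$, then $(b'_s)_{s\in S'}$ has constant profile $a$ as well. Theorem \ref{thm:K_2p+1} then forces $a=0$, so $\lambda((b_s)_{s\in S})=0$ and $p\mid d(M(K_n))$.

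Since this holds for every prime $p\le (n-1)/2$, the product of all such primes, namely $e^{\vartheta(\frac{n-1}{2})}$, divides $d(M(K_n))$. The exponential growth then follows from $\vartheta(x)=x+o(x)$.

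The only real obstacle is Step 1: matching the paper's dual conventions so that deletion in the graph corresponds to the projection $U\mapsto\pi(U)$, rather than to intersection with a coordinate subspace, which would correspond to contraction. If the conventions turn out to be reversed, I would instead realize $M(K_{2p+1})$ inside $M(K_n)$ as the appropriate minor (contraction) and run the same pushforward argument along the dual projection. Alternatively, I would cite the statement from \cite{EGFS} that $d$ behaves well under taking minors.
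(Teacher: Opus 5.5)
Your proposal is correct and follows the paper's route. The paper combines Theorem \ref{thm:K_2p+1} with the minor-monotonicity result \cite[Proposition 7.2]{EGFS} by citation. You instead prove the deletion case needed here by hand, via the collapsing map $\Alb_p(M(K_n))\to\Alb_p(M(K_{2p+1}))$, and then correctly use the constant-profile hypothesis to carry $a=0$ from $S'$ to all of $S$.

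Your concern about Step 1 is unfounded. With $U=B^1(G,\Z)$, as in the paper's treatment of graphic matroids, restricting coboundaries from $K_n$ to $K_{2p+1}$ maps $B^1(K_n,\F_p)$ onto $B^1(K_{2p+1},\F_p)$, because any function on the vertices of $K_{2p+1}$ extends to $K_n$. So deletion really is projection, and the vertex map is just the restriction $H^1(K_n,\F_p)\to H^1(K_{2p+1},\F_p)$.
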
 

The cases $p=2,3$ of Theorem \ref{thm:K_2p+1} were previously established in \cite{EGFS,EGFS-A6} using Sage and MAGMA code, respectively; any case for $p\geq 5$ is out of reach with that method, due to computational limitations.

In \cite{EGFS}, a similar computation was done at the prime $p=2$ for the complete bipartite graph $K_{3,3}$; the corresponding graphic matroid is a minor of $R_{10}$.
Using previous work of Gwena \cite{gwena} and Voisin \cite{voisin-JEMS}, this led to a proof of stable irrationality of very general cubic threefolds in \cite{EGFS}.
In an appendix to this paper, we generalize Theorem \ref{thm:K_2p+1} to the complete bipartite graph $K_{3,2p-1}$, where $p$ denotes an arbitrary prime; this yields in particular a conceptual proof of the aforementioned computation for $K_{3,3}$ and hence for $R_{10}$.

\begin{theorem} \label{thm:K_3,2p-1}
Let $p$ be a prime number and let $\underline R=M(K_{3,2p-1})$ be the graphic matroid associated to the complete bipartite graph $K_{3,2p-1}$ with edge set $S$.
Then any $\F_p$-solution $b=(b_s)_{s\in S}$ of $\underline R$ in the mod $p$ Albanese graph $\Alb_p(\underline R)$ has trivial color profile $\lambda(b)=0\in \F_p^S$. 
\end{theorem}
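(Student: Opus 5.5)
The plan is to translate the statement into a vanishing statement about certain polynomial "moments" of the chains $b_s$, and then to exploit the symmetry of $K_{3,2p-1}$. The strategy mirrors the one I would use for Theorem \ref{thm:K_2p+1}.

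\textbf{Step 1: concrete description of $\Alb_p(\underline R)$.} Let $G=K_{3,2p-1}$ with vertex set $A\sqcup B$, where $|A|=3$ and $|B|=2p-1$. Fix an orientation of each edge. For the graphic matroid $M(G)$, the dual realization $U\hookrightarrow \Z^S$ is the cut space, that is, the image of the coboundary $\delta\colon \Z^{A\sqcup B}\to \Z^S$. Hence $W=\F_p^S/\delta(\F_p^{A\sqcup B})\cong H^1(G,\F_p)$. The solution condition is only needed on the generating cuts $c=\delta(1_v)$. So the hypothesis says exactly that for each vertex $v$ the star chain $\beta_v:=\sum_{s\ni v}\pm b_s$ is closed.

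\textbf{Step 2: test functions.} For any function $\varphi\colon W\to\F_p$ and any closed $\F_p$-chain $z$ on the Cayley graph, we have
$$\sum_{s}\sum_{w}z_{s,w}\bigl(\varphi(w+\bar e_s)-\varphi(w)\bigr)=0.$$
Taking $\varphi$ linear recovers $\lambda\in U_{\F_p}$, i.e. $\lambda$ is itself a cut. Taking $\varphi$ to be polynomial of degree $d\le p-1$ on $W$ gives identities between "moments" $\mu_s(\psi):=\sum_w (b_s)_w\,\psi(w)$ for polynomials $\psi$ of degree $d-1$. Here the degree-zero moment is $\lambda_s$. I will write $W$ in coordinates via a basis of the cycle space, using the $4$-cycles $a\,b\,a'\,b'$ through pairs of vertices on each side. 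In these coordinates, the discrete derivative in direction $\bar e_s$ is explicit: a function $\varphi$ on $W=H^1$ evaluated at a cocycle class depends only on its pairings with the $4$-cycles.

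\textbf{Step 3: a descending induction on degree.} Applying the identity to each closed star chain $\beta_v$ and to each monomial $\varphi$ of degree $d$ gives linear relations among the moments of degree $d-1$. The aim is to prove by descending induction on $d$, starting from $d=p-1$ or from the top degree where moments are unconstrained, that suitable symmetric combinations of degree-$(d-1)$ moments vanish. The final conclusion is $\lambda_s=0$.

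The role of the bound $|B|=2p-1$ is as follows. A monomial of degree $p-1$ in the $4$-cycle coordinates attached to a vertex $b\in B$ requires roughly $2(p-1)$ further vertices of $B$ to realize without collisions. The key nonvanishing is then that $(p-1)!\ne 0$ in $\F_p$, which comes from taking the $(p-1)$-st discrete derivative of $x^{p-1}$. Using the $S_3\times S_{2p-1}$ symmetry, I would average to reduce to showing that a single edge $s=ab$ has $\lambda_s=0$. This works because $\lambda$ is already known to be a cut $\delta f$, so it suffices to show that $f$ is constant.

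\textbf{Main obstacle.} I expect the hardest part to be Step 3. One must choose the right family of degree-$(p-1)$ test polynomials so that, after summing the star relations over $v\in A$ and $v\in B$, all higher moments cancel and only $(p-1)!\cdot(\lambda_s-\lambda_{s'})$ survives for adjacent edges. This is where exactly $2p-1$ vertices on one side should be needed.

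My first attempt would be the product $\varphi=\prod_{j=1}^{p-1}x_{j}$, where the $x_j$ are the coordinates of $4$-cycles through a fixed pair $a,a'\in A$ and disjoint pairs in $B\setminus\{b\}$. Its iterated difference in the colors of those cycles isolates the edge $ab$.

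If this direct approach gets stuck, a fallback is to reduce to minors. Presumably, as in \cite{EGFS}, solutions restrict to deletions and contractions, and $K_{3,2p-1}$ contracts onto configurations related to $K_{2p+1}$-type pieces. In that case the argument for Theorem \ref{thm:K_2p+1} could be adapted verbatim.
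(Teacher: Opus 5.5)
\textbf{There is a genuine gap.} Your Steps 1--2 set up the right duality: pairing the closed star chains with test functions is exactly Lemma~\ref{lem:test-functions} and Proposition~\ref{prop:test-functions-1}. However, you never construct anything at the one nontrivial point, and the mechanism you sketch for it cannot work.

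Linear test functions already force $\lambda$ to be constant (see the corollary after Proposition~\ref{prop:linear-test-function}). With $K_{3,2p-1}$ oriented from $A$ to $B$, the constant profile $(c,\dots,c)$ is itself the cut $c\sum_{v\in B}\delta_v$. So ``$\lambda=\delta f$, show $f$ is constant'' is just a restatement of the theorem. Relations such as $(p-1)!(\lambda_s-\lambda_{s'})=0$, and indeed any relation $\sum_s\mu_s\lambda_s=0$ with $\sum_s\mu_s=0$ (symmetrized or not), hold for every constant profile. They can never exclude $c\neq 0$. What you need is a family $(g_v)$ with $\sum_s\mu_s\neq 0$, for instance $\mu$ the indicator of a single edge.

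Test polynomials of degree $\le p-1$, as in your Step 2 and your candidate $\prod_{j=1}^{p-1}x_j$, can never provide this. On polynomials of degree $\le p-1$, Taylor's formula gives $\Delta_e=\sum_{k=1}^{p-1}\partial_e^k/k!$; inverting the truncated exponential gives $\partial_e=\sum_{k=1}^{p-1}(-1)^{k+1}\Delta_e^k/k$. Now suppose $h_s=g_b-g_a$ for $s=[a\to b]$ satisfies $\Delta_{e_s}h_s=\mu_s$ constant. Then $\partial_{e_s}h_s=\mu_s$. Comparing constant terms gives $\mu_s=L_b(e_s)-L_a(e_s)$, where $L_v$ is the linear part of $g_v$. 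Hence $\sum_s\mu_s=\sum_v L_v(\delta_v)=0$, because $\delta_v=0$ in $W$.

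Any consequence of your degree-bounded moment relations that involves only $\lambda$ is of this form. So the $(p-1)!\neq 0$ mechanism is structurally the wrong one; for $p=2$ it simply reduces to linear test functions.

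\textbf{What is missing} is a family of test functions of degree exactly $p$, which exploits the failure of Taylor's formula at order $p$. In the paper, let $x_j,y_j$ be the coordinates dual to $e_{1j},e_{2j}$, let $\sigma_d=\sigma_d(y_1,\dots,y_{2p-2})$, and let $h_j=\sigma_{p-1}(y_1,\dots,\widehat{y_j},\dots,y_{2p-2})$. The congruences $\binom{2p-2}{p}\equiv 1$, $\binom{p-1}{1}\equiv -1$ and $\binom{2p-2-j}{d-j}\equiv 0$ in the remaining cases give
$\sigma_{p-1}(y_1+1,\dots,y_{2p-2}+1)=\sigma_{p-1}$ and $\sigma_p(y_1+1,\dots,y_{2p-2}+1)-\sigma_p=1-\sigma_{p-1}$.
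The paper then takes $g_{L_0}=0$, $g_{R_0}=\sigma_p-l\sigma_{p-1}$ with $l=\sum_i a_ix_i$ and $\sum_i a_i=-1$, $g_{L_2}=\sigma_p$, $g_{R_j}=(y_j-x_j)h_j$, and $g_{L_1}=-\sum_j x_jh_j$, using $\sum_j h_j=(p-1)\sigma_{p-1}=-\sigma_{p-1}$. These satisfy $\Delta_{e_{ij}}(g_{R_j}-g_{L_i})=1$ for $(i,j)=(0,0)$ and $=0$ otherwise. Hence $\lambda_{e_{00}}=0$, and all $\lambda_s=0$ by symmetry. This is where $|B|=2p-1$ really enters, not through collision-free monomials.

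\textbf{Your fallback also fails.} By \cite[Proposition 7.2]{EGFS}, the property passes from a minor up to the larger matroid. You would therefore need $M(K_{2p+1})$ to be a minor of $M(K_{3,2p-1})$. This is impossible, since $M(K_{2p+1})$ has $p(2p+1)$ elements while $M(K_{3,2p-1})$ has only $6p-3$.
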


Let us explain the relation between the above theorems, which is given by the main result in \cite{EGFS}.
To this end, recall that to any regular matroid $\underline R$ of rank $g$ and ground set $S$, we can associate a projective family $\mathcal X^\star\to (\Delta^\star)^S$ of principally polarized abelian $g$-folds over an $|S|$-dimensional polydisc, whose monodromy is encoded by an integral realization $S\to U^\ast$, where $U= {\rm gr}^W_0H_1(X_t,\Z)$ for a base point $t$.
By \cite[Proposition 4.10]{EGFS-survey}, we may assume that this family extends to a matroidal family over some quasi-projective base $B^\star$, see \cite[Definition 4.1]{EGFS}.

\begin{theorem}[{\cite[Theorem 8.10]{EGFS}}] \label{thm:EGFS}
Let $\underline R$ be a regular matroid of rank $g$ on the ground set $S$ and let $\mathcal X^\star\to (\Delta^\star)^S$ be a family of $g$-dimensional principally polarized abelian varieties associated to $\underline R$ as above.
Assume that any $\F_p$-solution $b=(b_s)_{s\in S}$ of $\underline R$ in $\Alb_p(\underline R)$ with constant color profile $\lambda(b)=(c,c,\dots ,c)\in \F_p^S$ satisfies $\lambda(b)=0$, i.e.~$c=0$.
Then any curve $C\subset X_t$ on a very general fiber $X_t$ with class $[C]=m\cdot \theta_{g-1}$ satisfies $p\mid m$. 
\end{theorem}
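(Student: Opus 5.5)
This is \cite[Theorem 8.10]{EGFS}. My plan is to argue by contradiction, by degeneration and tropicalization. Suppose $C\subset X_t$ is a curve on a very general fiber with $[C]=m\cdot\theta_{g-1}$ and $p\nmid m$.

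\textbf{Step 1: spread out and degenerate.} Since $t$ is very general, $C$ spreads out over a finite cover of the matroidal base $B^\star$. After base change along $z_s\mapsto z_s^N$, it spreads out over a punctured polydisc $(\Delta^\star)^S$, still with class $m\theta_{g-1}$ in every fiber. I then choose a one-parameter arc $\Delta^\star\to(\Delta^\star)^S$ approaching the origin, along which the family degenerates maximally. For such an arc there is a Mumford-type toric model $\mathcal X\to\Delta$. The components of its central fiber are toric varieties indexed by the vertices of the periodic Delaunay/Voronoi decomposition determined by the realization $S\to U^\ast$. The one-dimensional strata are indexed by edges whose directions are the vectors $s\in S$.

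\textbf{Step 2: pass to a mod $p$ cover and tropicalize.} I pull back along the étale cover of $X_t$ dual to $U\hookrightarrow\Z^S$ reduced mod $p$. This cover is chosen so that its dual graph of one-dimensional strata is precisely the Cayley graph $\Alb_p(\underline R)$ on $W=\F_p^S/U_{\F_p}$. The flat limit of the pulled-back curve is then a $1$-cycle on the central fiber. Recording, for each color $s$, the degrees of this limit on the toric curves of color $s$ (reduced mod $p$) defines a chain $b_s\in C_1(\Alb_p(\underline R),\F_p)$.

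\textbf{Step 3: check that $b$ is an $\F_p$-solution.} The required closedness for $c\in U_{\F_p}$ should come from two inputs. The first is the balancing condition of the limit cycle inside each toric component. The second is monodromy invariance of $[C]$, i.e.\ the class is fixed under the Picard--Lefschetz transformations $T_s$. Varying the arc through different one-parameter degenerations lets the coefficients $c_s$ range over all of $U_{\F_p}$. In each case the combination $\sum_s c_s b_s$ is a tropical $1$-cycle, hence has zero boundary.

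\textbf{Step 4: compute the color profile and conclude.} The total degree $\lambda_s(b_s)$ of color $s$ is the intersection number of $C$ with the class dual to the vanishing cycle $s$. This intersection number is computable from the class of $C$ alone: for $[C]=m\theta_{g-1}$ it equals $m$ times a universal quantity, which is $1$ for the minimal class. Hence $\lambda(b)=(m,\dots,m)$, a nonzero constant vector mod $p$. This contradicts the hypothesis that every solution with constant color profile has trivial profile, so $p\mid m$.

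The main obstacle I expect is Step 3. One must show that a single limiting configuration, extracted compatibly across all the one-parameter degenerations, produces chains $b_s$ that are genuinely well defined and satisfy the closedness condition for every $c\in U_{\F_p}$, not just for one chosen arc. I would first try to handle this by working with a toroidal compactification over the whole polydisc, so that all arcs share one central fiber, and then reading off closedness from the intersection pairing on the limiting mixed Hodge structure.
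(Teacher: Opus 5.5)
The paper gives no proof of this statement; it is imported from \cite[Theorem 8.10]{EGFS}, so your outline has to stand on its own. Its overall shape is what the statement calls for: turn $C$ into an $\F_p$-solution with colour profile $(m,\dots,m)$ and contradict the hypothesis. But the construction of the chains in Step~2 does not work.

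There is in general no étale cover of $X_t$ with deck group $W=\F_p^S/U_{\F_p}\cong\F_p^{|S|-g}$. A connected étale cover of an abelian $g$-fold has Galois group a quotient of $H_1(X_t,\Z)\cong\Z^{2g}$, hence of $p$-rank at most $2g$. For $M(K_{2p+1})$, however, $|S|-g=2p^2-p>4p$ for every $p\geq 3$. There is also no natural map $H_1(X_t,\Z)\to W$: the relevant quotient is ${\rm gr}^W_0=U$, and $U\to\Z^S\to\Z^S/U$ is zero.

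The strata graph of the model in Step~1 (the one whose $1$-strata are coloured by $S$) is not $\Alb_p(\underline R)$ either. Its vertices are the maximal cells of the dicing of $U_{\mathbb R}$ by the hyperplanes $\{s\in\Z\}$, modulo $U$. For $K_{2p+1}$ these cells are alcoves, so the graph is $(2p+1)$-regular, whereas $\Alb_p(\underline R)$ is $2|S|$-regular.

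Worse, Steps~2 and~4 contradict each other. Suppose a $W$-Galois cover acting by translations on the strata graph existed. Then the preimage of $C$, and hence its limit, would be $W$-invariant. Every $f_s$ would be constant, so $\lambda_s(b_s)=|W|\,f_s=0$ in $\F_p$, and you could never obtain $(m,\dots,m)$ with $p\nmid m$.

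The relation between an $S$-coloured graph and $\Alb_p(\underline R)$ is a pushforward, not a pullback. Send a maximal cell $\sigma$ to the class of $(\lfloor s(u)\rfloor)_{s\in S}$, for $u$ in the interior of $\sigma$. This is a colour- and orientation-preserving graph map to $\Alb_p(\underline R)$: crossing a wall in $\{s=n\}$ adds $e_s$, and translating by $u'\in U$ changes the label by $(s(u'))_s\in U$. Pushforward along it preserves closedness and colour degrees.

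Steps~3 and~4, which you flag yourself, contain the actual content of \cite[Theorem 8.10]{EGFS}, and they are only asserted.

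First, the flat limit of $C$ need not lie on the one-dimensional strata; it generally meets open torus orbits. So ``the degree of the limit on the toric curves of colour $s$'' is undefined without a further argument, for instance a tropicalization showing that the limit is carried by an $S$-coloured graph whose colour-$s$ edges point in direction $s\in U^\ast$.

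Second, given such a weighted graph, closedness of $\sum_s c_sb_s$ for all $c\in U_{\F_p}$ is equivalent to the vertexwise balancing relation $\sum_{e\ni v}\pm w_e\, s_e=0$ in $U^\ast_{\F_p}$. To see this, pair the relation with $u\in U_{\F_p}$ and take $c=(s(u))_s$. This must hold for one fixed family $(b_s)$. ``Varying the arc'' cannot supply different $c$, because different arcs produce different central fibres and different chains.

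Third, the identification $\lambda_s(b_s)=m$ is unexplained. You need an actual computation relating the colour degrees of the limit to the class $m\,\theta_{g-1}$, for instance by comparing the tropical class of the limit with $m$ times the class of the principal polarization on the tropical abelian variety. ``The intersection number with the class dual to the vanishing cycle'' does not by itself compute the degrees of the limit.
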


\section{Linear relations of color profiles via test functions}

\subsection{The case of an arbitrary regular matroid}
Let $\underline R$ be a regular matroid on the ground set $S$ and with integral realization $S\to U^\ast$.
Consider the induced embedding $U\hookrightarrow \Z^S$ and consider its mod $p$ reduction: $U_{\F_p}\hookrightarrow \F_p^S$.
Let $W\coloneq \F_p^S/U_{\F_p}$ be the vertex set of $\Alb_p(\underline R)$.
A $1$-chain $b_s\in C_1(\Alb(\underline R),\F_p)$ of color $s\in S$ can be  written as
\begin{align} \label{eq:b_s}
b_s=\sum_{w\in W} f_s(w)[w\to w+e_s],
\end{align}
for some uniquely determined functions $f_s\colon W\to \F_p$, where $[w\to w+e_s]$ denotes the positively oriented edge of color $s$ that joins $w$ and $w+e_s$.
(Here and in what follows, we denote by slight abuse of notation the $s$-th basis vector $e_s\in\F_p^S$ and its image in the quotient $W=\F_p^S/U_{\F_p}$ by the same letter.)
Moreover, $\lambda_s(b_s)=\sum_{w\in W} f_s(w)$.

\begin{definition} \label{def:delta}
For a function $f\colon W\to \F_p$ and an element $e\in W$, we define the {\rm discrete derivative} $\Delta_e(f)\colon W\to \F_p$ by 
$$
\Delta_e(f)(w)\coloneqq f(w+e)-f(w).
$$
\end{definition}

The above operator is clearly linear in $f$. Moreover, we have the following Leibniz rule.

\begin{lemma} \label{lem:Leibniz-rule}
If $f,g\colon W\to \F_p$ are functions on $W$ and $e\in W$, then 
$$
\Delta_{e}(f\cdot g) =\Delta_{e}(f) \cdot g +f\cdot \Delta_{e}(g) +\Delta_{e}(f) \cdot \Delta_{e}(g)
$$
and
$$
\Delta_{e}(f\cdot g) =\Delta_{e}(f) \cdot g + (f\circ t_{e})\cdot \Delta_{e}(g) ,
$$
where $t_{e}\colon W\to W$, $x\mapsto x+e$, is translation by $e$.
\end{lemma}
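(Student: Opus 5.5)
The plan is to evaluate both sides of each identity pointwise at an arbitrary $w\in W$, using only Definition \ref{def:delta} and the fact that multiplication of functions $W\to\F_p$ is pointwise. Write $f(w+e)=f(w)+\Delta_e(f)(w)$ and $g(w+e)=g(w)+\Delta_e(g)(w)$. Then
$$
\Delta_e(f\cdot g)(w)=f(w+e)g(w+e)-f(w)g(w).
$$

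For the first identity, substitute both expansions into the product $f(w+e)g(w+e)$ and expand. This produces four terms:
$$
f(w)g(w)+\Delta_e(f)(w)\,g(w)+f(w)\,\Delta_e(g)(w)+\Delta_e(f)(w)\,\Delta_e(g)(w).
$$
Subtracting $f(w)g(w)$ leaves exactly the claimed sum.

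For the second identity, substitute only $g(w+e)=g(w)+\Delta_e(g)(w)$ and keep $f(w+e)=(f\circ t_e)(w)$ as it is. This gives
$$
f(w+e)g(w+e)-f(w)g(w)=\bigl(f(w+e)-f(w)\bigr)g(w)+f(w+e)\,\Delta_e(g)(w),
$$
which is $\Delta_e(f)(w)\,g(w)+(f\circ t_e)(w)\,\Delta_e(g)(w)$.

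Alternatively, the second identity follows from the first, because $f\cdot\Delta_e(g)+\Delta_e(f)\cdot\Delta_e(g)=(f+\Delta_e f)\cdot\Delta_e(g)$ and $f+\Delta_e f=f\circ t_e$.

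There is no real obstacle here; the only care needed is to track which factor is evaluated at $w$ and which at $w+e$.
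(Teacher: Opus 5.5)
Your proposal is correct and follows essentially the same route as the paper: a pointwise expansion of $f(w+e)g(w+e)-f(w)g(w)$ using Definition~\ref{def:delta}, which yields both identities. Your added remark that the second identity follows from the first via $f+\Delta_e f=f\circ t_e$ is a valid shortcut.
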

\begin{proof}
For $w\in W$, we have
\begin{align*}
\Delta_{e}(f\cdot g)(w)&=f(w+e)g(w+e)-f(w)g(w) \\
&=f(w+e)(g(w+e)-g(w))+g(w)(f(w+e)-f(w)).
\end{align*}
This proves the second identity.
The other identity is proven similarly.
\end{proof}

\begin{lemma} \label{lem:Delta-solutions}
A collection of $1$-chains $(b_s)_{s\in S}$ as in \eqref{eq:b_s} is a solution of $\underline R$ in $\Alb_p(\underline R)$ if and only if for all $c_s$ with $\sum c_se_s\in U_{\F_p}$, we have
$$
\sum_{s \in S} c_s \Delta_{-e_s}(f_s)(w)=0\quad \quad \text{for all $w\in W$.}
$$
\end{lemma}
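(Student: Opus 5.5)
The plan is to compute the boundary of an arbitrary linear combination $\sum_s c_s b_s$ in terms of the functions $f_s$ from \eqref{eq:b_s}, and then to read off the closedness condition vertex by vertex.

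First, fix $s\in S$ and compute $\del b_s$. Each positively oriented edge $[w\to w+e_s]$ has boundary $[w+e_s]-[w]$, so
$$
\del b_s=\sum_{w\in W} f_s(w)\bigl([w+e_s]-[w]\bigr).
$$
We reindex the first sum by substituting $w\mapsto w-e_s$; this is a bijection of $W$, since $W$ is a group. The coefficient of a vertex $[w]$ in $\del b_s$ is then
$$
f_s(w-e_s)-f_s(w)=\Delta_{-e_s}(f_s)(w),
$$
by Definition~\ref{def:delta}.

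Second, we use linearity of $\del$. For any scalars $c_s\in\F_p$, the coefficient of $[w]$ in $\del\sum_s c_sb_s$ is $\sum_s c_s\Delta_{-e_s}(f_s)(w)$. The vertices $[w]$, $w\in W$, form a basis of $C_0(\Alb_p(\underline R),\F_p)$. Hence the $0$-chain $\del\sum_s c_sb_s$ vanishes if and only if this coefficient is $0$ for every $w\in W$.

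Finally, recall that by definition $(b_s)$ is an $\F_p$-solution exactly when $\del\sum_s c_sb_s=0$ for all $(c_s)$ with $\sum_s c_se_s\in U_{\F_p}$. By the previous step, this is equivalent to the stated vanishing of $\sum_s c_s\Delta_{-e_s}(f_s)(w)$ for all such $(c_s)$ and all $w\in W$, which proves the lemma.

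There is no real obstacle in this argument. The only point needing care is the sign convention: because the edges are oriented from $w$ to $w+e_s$, the reindexing produces the derivative $\Delta_{-e_s}$ rather than $\Delta_{e_s}$.
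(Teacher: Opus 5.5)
Your proposal is correct and follows essentially the same route as the paper's proof. Both expand $\del\sum_s c_sb_s=\sum_s\sum_w c_sf_s(w)([w+e_s]-[w])$, reindex by translation so that the coefficient of $[w]$ becomes $\sum_s c_s\bigl(f_s(w-e_s)-f_s(w)\bigr)=\sum_s c_s\Delta_{-e_s}(f_s)(w)$, and read off closedness vertex by vertex.
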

\begin{proof}
It suffices to prove that $\sum_{s \in S} c_sb_s$ is closed if and only if the function $\sum_{s \in S} c_s \Delta_{-e_s}(f_s)$ vanishes on $W$.
To see this, note that
\begin{align*}
\del \sum_{s \in S} c_sb_s &= \sum_{s \in S} \sum_{w\in W} c_sf_s(w) ([w+e_s]-[w]) ,
\end{align*}
where $[w]$ denotes the $0$-chain
on ${\rm Alb}_p(\underline{R})$ associated to the vertex 
$w$.
This vanishes if and only if for all $w\in W$, we have
$$
 \sum_{s \in S} c_s(f_s(w-e_s)-f_s(w))=0 .
$$
The claim in the lemma now follows.
\end{proof}

We aim to relate the vanishing of the function $\sum_{s \in S} c_s \Delta_{-e_s}(f_s)$ to the color profile $\sum_{w\in W} f_s(w)$.
The idea to achieve this is to consider the following pairing.

\begin{definition} \label{def:pairing}
For two functions $f,g\colon W\to \F_p$, we define the pairing
$$
\langle f,g\rangle\coloneq \sum_{w\in W}f(w)g(w)\in \F_p.
$$
\end{definition}

\begin{lemma} \label{lem:pairing-f,g}
For $e\in W$ and two functions $f,g\colon W\to \F_p$, we have
$$
\langle \Delta_e f,g\rangle=\langle f, \Delta_{-e} g\rangle .
$$
\end{lemma}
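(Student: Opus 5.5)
The identity is a discrete integration by parts: expand the left-hand side, then reindex the sum over the finite group $W$ by a translation so that the two sides become visibly equal.

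First, by Definitions \ref{def:delta} and \ref{def:pairing},
$$
\langle \Delta_e f,g\rangle=\sum_{w\in W} f(w+e)g(w)-\sum_{w\in W} f(w)g(w).
$$
Since $W$ is a finite abelian group, the translation $t_e\colon W\to W$, $w\mapsto w+e$, is a bijection. We may therefore substitute $w=v-e$ in the first sum, as $v$ ranges over all of $W$. This gives
$$
\sum_{w\in W} f(w+e)g(w)=\sum_{v\in W} f(v)g(v-e).
$$
Hence
$$
\langle \Delta_e f,g\rangle=\sum_{v\in W} f(v)\bigl(g(v-e)-g(v)\bigr).
$$
By Definition \ref{def:delta} applied to $-e$, the bracket is exactly $\Delta_{-e}(g)(v)$. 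The right-hand side is therefore $\langle f,\Delta_{-e}g\rangle$, which proves the claim.

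There is no real obstacle. The only point to justify is the reindexing step, which uses just that $W$ is a finite group; the sums are finite, so no convergence or boundary terms arise. No earlier lemma, such as the Leibniz rule (Lemma \ref{lem:Leibniz-rule}), is needed.
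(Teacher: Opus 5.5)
Your proof is correct and follows the same route as the paper: expand $\langle \Delta_e f,g\rangle$ and reindex the first sum by the translation $w\mapsto w+e$ of the finite group $W$. You then recognize $g(v-e)-g(v)$ as $\Delta_{-e}g(v)$.
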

\begin{proof}
We have
\begin{align*}
\langle \Delta_e f,g\rangle& = \sum_{w\in W} (f(w+e)-f(w))g(w)\\
&= \sum_{w\in W}  f(w)g(w-e)-\sum_{w \in W} f(w)g(w)\\
&= \sum_{w\in W}  f(w)\Delta_{-e}g(w) .
\end{align*}
This proves the lemma.
\end{proof}

\begin{lemma} \label{lem:test-functions}
Assume that $(b_s)_{s\in S}$ as in \eqref{eq:b_s} is an $\F_p$-solution of $\underline R$ in $\Alb_p(\underline R)$, with $f_s\colon W\to \mathbb{F}_p$ the coefficient function for the $1$-chain $b_s$. 
Then for any (test) function $g\colon W\to \F_p$ and any $c_s\in \F_p$, $s\in S$, with $\sum_{s\in S}c_se_s\in U_{\F_p}$, 
we have
$$
\sum_{s\in S}  c_s \langle f_s ,\Delta_{e_s}g \rangle
=\sum_{s\in S} \sum_{w\in W} f_s(w) c_s \Delta_{e_s}g(w)= 0. 
$$
\end{lemma}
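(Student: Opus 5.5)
The plan is to combine Lemma \ref{lem:Delta-solutions} with the adjointness of Lemma \ref{lem:pairing-f,g}. Fix $g\colon W\to \F_p$ and coefficients $c_s\in\F_p$ with $\sum_s c_se_s\in U_{\F_p}$. Because $(b_s)_{s\in S}$ is an $\F_p$-solution, Lemma \ref{lem:Delta-solutions} gives that the function
$$
h\coloneq \sum_{s\in S} c_s\,\Delta_{-e_s}(f_s)\colon W\to \F_p
$$
vanishes identically on $W$. Consequently $\langle h,g\rangle=0$.

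Next, $\langle h,g\rangle$ is expanded using bilinearity of the pairing from Definition \ref{def:pairing}:
$$
0=\langle h,g\rangle=\sum_{s\in S} c_s\,\langle \Delta_{-e_s} f_s, g\rangle .
$$
To each summand we apply Lemma \ref{lem:pairing-f,g} with $e=-e_s$. This moves the derivative onto the test function: $\langle \Delta_{-e_s} f_s,g\rangle=\langle f_s,\Delta_{e_s} g\rangle$. Substituting gives $\sum_s c_s\langle f_s,\Delta_{e_s}g\rangle=0$. The middle expression in the statement is the same quantity with the pairing written out as $\sum_w f_s(w)\,c_s\,\Delta_{e_s}g(w)$, so that equality holds by definition.

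No step is a real obstacle; this is a formal summation-by-parts argument. The only point needing care is the sign convention. Lemma \ref{lem:Delta-solutions} involves $\Delta_{-e_s}$ applied to $f_s$, while the statement has $\Delta_{+e_s}$ applied to $g$, so Lemma \ref{lem:pairing-f,g} must be invoked with $e=-e_s$, not $e=e_s$.
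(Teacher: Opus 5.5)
Your proposal is correct and matches the paper's proof. Both use Lemma~\ref{lem:Delta-solutions} to get that $\sum_s c_s\Delta_{-e_s}(f_s)$ vanishes, pair it with $g$, expand by bilinearity, and apply Lemma~\ref{lem:pairing-f,g} with $e=-e_s$; your remark that the sign must be handled this way is also right.
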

\begin{proof}
By Lemma \ref{lem:Delta-solutions}, $\sum_{s\in S} c_s \Delta_{-e_s}(f_s)$ vanishes on $W$.
Hence, for any function $g\colon W\to \F_p$, we have
$$
0=\langle \sum_{s\in S} c_s \Delta_{-e_s}(f_s) ,g \rangle  =\sum_{s\in S}  c_s  \langle  \Delta_{-e_s}(f_s) ,g \rangle .
$$
The claimed identity now follows from Lemma \ref{lem:pairing-f,g}.
\end{proof}

From the above lemma, we can deduce the following abstract criterion.

\begin{proposition} \label{prop:test-functions} 
Assume that there is an index set $I$, a family of test functions $g_i\colon W\to \F_p$, $i\in I$, and a family of elements $\sum_{s\in S} c_{si}e_s\in U_{\F_p}$, $i\in I$, such that the function
$$
\sum_{i\in I} c_{si} \Delta_{e_s}g_i\colon W\longrightarrow \F_p
$$
is constant, equal to $\mu_s\in F_p$. 
Then any $\F_p$-solution $b=(b_s)$ of $\underline R$ in $\Alb_p(\underline R)$ satisfies $\sum_{s\in S}\mu_s\lambda_s(b_s)=0$. 
\end{proposition}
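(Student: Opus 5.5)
The plan is to apply Lemma \ref{lem:test-functions} once for each $i\in I$ and add the resulting identities. Fix an $\F_p$-solution $b=(b_s)$ with coefficient functions $f_s$ as in \eqref{eq:b_s}, so that $\lambda_s(b_s)=\sum_{w\in W}f_s(w)=\langle f_s,\mathbf 1\rangle$, where $\mathbf 1$ is the constant function $1$ on $W$.

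For each $i\in I$, the element $\sum_s c_{si}e_s$ lies in $U_{\F_p}$. Applying Lemma \ref{lem:test-functions} with test function $g_i$ and coefficients $c_{si}$ gives
$$\sum_{s\in S} c_{si}\langle f_s,\Delta_{e_s}g_i\rangle=0.$$
We then sum over $i\in I$. If $I$ is infinite, only finitely many distinct pairs $(g_i,(c_{si})_s)$ can occur, since $W$ and $\F_p$ are finite. So we either assume $I$ finite or note that the hypothesis, namely that $\sum_i c_{si}\Delta_{e_s}g_i$ is a well-defined function, implicitly means a finite sum.

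Next, we exchange the order of summation and use bilinearity of the pairing from Definition \ref{def:pairing}. This gives
$$0=\sum_{s\in S}\Big\langle f_s,\sum_{i\in I}c_{si}\Delta_{e_s}g_i\Big\rangle=\sum_{s\in S}\langle f_s,\mu_s\mathbf 1\rangle=\sum_{s\in S}\mu_s\lambda_s(b_s),$$
which is the claim.

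There is no real obstacle here. The only points needing care are the finiteness of $I$ and the bilinearity of $\langle\ ,\ \rangle$ in its second argument. Both are immediate.
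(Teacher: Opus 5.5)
Your proof is correct and is essentially the paper's argument: apply Lemma~\ref{lem:test-functions} for each $i\in I$, sum over $i$, and use bilinearity of the pairing together with $\lambda_s(b_s)=\sum_{w\in W}f_s(w)$. One small point: your aside that only finitely many distinct pairs $(g_i,(c_{si})_s)$ occur does not by itself make the sum over $I$ finite. Your fallback, that finiteness is implicit in the hypothesis, is the right reading, and in the paper's application $I=V$ is finite anyway.
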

\begin{proof}
By Lemma \ref{lem:test-functions}, 
$$
\sum_{s\in S} \sum_{w\in W} f_s(w) c_{si} \Delta_{e_s}g_i(w)= 0 \quad \quad \text{for all $i\in I$.}
$$
We take the sum over $i\in I$ and get
$$
0=\sum_{s\in S} \sum_{w\in W} f_s(w) \sum_{i\in I} c_{si} \Delta_{e_s}g_i(w)=\sum_{s\in S} \sum_{w\in W} f_s(w)\mu_s.
$$
This proves the proposition, because $\sum_{w\in W} f_s(w)=\lambda_s(b_s)$.
\end{proof}

\subsection{The case of graphic matroids}
Let us now specialize to the case where $\underline R=M(G)$ is the graphic matroid associated to a graph $G$ with edge set $S$ and vertex set $V$.
Then 
$$
U_{\F_p}=B^1(G,\F_p)\subset \F_p^S=C^1(G,\F_p)\quad \text{and} \quad W=C^1(G,\F_p)/B^1(G,\F_p)=H^1(G,\F_p).
$$
In particular, there is natural surjection
\begin{align*}
\delta\colon \F_p^{V}&\longrightarrow U_{\F_p}, \\
v&\mapsto \delta_v,
\end{align*}
where $\delta_v$ is the linear functional which maps an oriented edge $e\in C_1(G,\F_p)$ to
$$
\delta_v(e)=\begin{cases}
1\quad & \text{if $e$ points towards $v$};\\
-1 \quad & \text{if $e$ starts at $v$}; \\
0\quad &\text{otherwise}.
\end{cases}
$$
The following elements of $\F_p^S$ thus span the space $U_{\F_p}\subset \F_p^S$:  
$$
U_{\F_p}=\textrm{span}\left\{  \delta_v = \sum_{s\in S}\delta_v(s)e_s \mid v\in V\right\} .
$$ 
Applying Proposition \ref{prop:test-functions} to the index set $I=V$ and to the above elements, we get the following.

\begin{proposition}\label{prop:test-functions-1} 
Assume that there is a family of test functions $g_v\colon W\to \F_p$, $v\in V$, such that for every edge $s=[a\to b]$ of $G$, we have that
$$
\Delta_{e_s}(g_b-g_a) \colon W\longrightarrow \F_p
$$
is constant, equal to $\mu_s\in \F_p$.
Then any $\F_p$-solution $b=(b_s)$ of $\underline R$ in $\Alb_p(\underline R)$ satisfies $\sum_s\mu_s\lambda_s(b_s)=0$. 
\end{proposition}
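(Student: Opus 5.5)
This follows from Proposition \ref{prop:test-functions} applied with index set $I=V$. For $v\in V$ we use the element $\delta_v=\sum_{s\in S}\delta_v(s)e_s\in U_{\F_p}$, so that $c_{sv}=\delta_v(s)$, and we attach the test function $g_v$ to it. What remains is to compute, for each edge $s$, the function $\sum_{v\in V}\delta_v(s)\,\Delta_{e_s}g_v$ and check that it is constant with value $\mu_s$.

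Fix an edge $s=[a\to b]$; we take $G$ loopless, which holds for the complete and complete bipartite graphs of interest. By definition, $\delta_v(s)=1$ if $v=b$ (the edge points towards $b$), $\delta_v(s)=-1$ if $v=a$ (the edge starts at $a$), and $\delta_v(s)=0$ otherwise. Hence
$$
\sum_{v\in V}\delta_v(s)\,\Delta_{e_s}g_v=\Delta_{e_s}g_b-\Delta_{e_s}g_a=\Delta_{e_s}(g_b-g_a),
$$
where the last equality is linearity of $\Delta_{e_s}$ (Definition \ref{def:delta}). By hypothesis this function is constant, equal to $\mu_s$. This is exactly the assumption of Proposition \ref{prop:test-functions} with $c_{sv}=\delta_v(s)$.

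Proposition \ref{prop:test-functions} therefore gives $\sum_s\mu_s\lambda_s(b_s)=0$ for every $\F_p$-solution $b$. The only point needing care is the sign convention for $\delta_v$. Even that does not affect the conclusion: reversing the convention replaces $\mu_s$ by $-\mu_s$ for every $s$, and the vanishing of $\sum_s\mu_s\lambda_s(b_s)$ is unchanged. There is no genuine obstacle.
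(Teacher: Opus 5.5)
Your proposal is correct and is essentially the paper's own proof: you apply Proposition \ref{prop:test-functions} with $I=V$ and $c_{sv}=\delta_v(s)$, and observe that $\sum_{v}\delta_v(s)\Delta_{e_s}g_v=\Delta_{e_s}(g_b-g_a)=\mu_s$. Your loopless restriction costs nothing: for a loop $s$ at $a$, reading $\delta_v$ as a coboundary gives $\delta_v(s)=0$ for all $v$, and $\mu_s=\Delta_{e_s}(g_a-g_a)=0$, so that case holds trivially.
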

\begin{proof}
By Proposition \ref{prop:test-functions} it suffices to show that
$$
\sum_{v\in V}  \delta_{v}(s) \Delta_{e_s}g_v(w)=\mu_s \quad \text{for all $w\in W$.}
$$
If $s=[a\to b]$, then $\delta_{v}(s)=1$ if $v=b$, $\delta_{v}(s)=-1$ if $v=a$, and $\delta_{v}(s)=0$ otherwise.
Hence,  
\begin{align*}
\sum_{v\in V}  \delta_{v}(s) \Delta_{e_s}g_v(w) &=\Delta_{e_s}(g_b-g_a)(w) .
\end{align*}
This is equal to $\mu_s\in \F_p$ for all $w\in W$ if and only if $\Delta_{e_s}(g_b-g_a)$ is a constant function, equal to $\mu_s$.
This concludes the proof. 
\end{proof}

We may apply this proposition
to restrict the possible color profiles
of $\mathbb{F}_p$-solutions of $M(G)$ in
${\rm Alb}_p(M(G))$, for sufficiently
connected graphs $G$:

\begin{proposition}\label{prop:linear-test-function}
Let $G$ be a biconnected
graph, i.e.~$G-v$ is connected for every
vertex $v\in V$.
Then there exists a family of linear test functions
$g_v\colon W\to \mathbb{F}_p$, $v\in V$, for which
$\Delta_{e_s}(g_b-g_a)$ is constant, and 
equal to $\mu_s\in \mathbb{F}_p$ for any tuple
$(\mu_s)_{s\in S}$ satisfying $\sum_{s\in S}\mu_s=0$.
\end{proposition}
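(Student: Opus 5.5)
The plan is to make the test functions $g_v$ linear functionals on $W=H^1(G,\F_p)$, which turns the discrete derivatives into constants. A linear $g\colon H^1(G,\F_p)\to \F_p$ is the same as pairing with a cycle $z\in H_1(G,\F_p)\subset C_1(G,\F_p)$. For such $g$,
$$
\Delta_{e_s}g(w)=g(w+e_s)-g(w)=g(e_s)=z_s,
$$
the coefficient of the edge $s$ in $z$, and this does not depend on $w$. So I would take $g_v=\langle z_v,-\rangle$ for cycles $z_v\in H_1(G,\F_p)$, $v\in V$. For every edge $s=[a\to b]$ the function $\Delta_{e_s}(g_b-g_a)$ is then the constant $(z_b-z_a)_s$. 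The proposition reduces to a statement about cycles: for every $(\mu_s)$ with $\sum_s\mu_s=0$, find cycles $z_v$ with $(z_b-z_a)_s=\mu_s$ for all edges $s=[a\to b]$.

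The map $(z_v)_{v\in V}\mapsto \big((z_b-z_a)_s\big)_{s\in S}$ is $\F_p$-linear. The vectors $e_s-e_t\in\F_p^S$ for $s\neq t$ span the hyperplane $\{\sum_s\mu_s=0\}$. It therefore suffices to realize each $\mu=e_s-e_t$, up to a nonzero scalar, and then take linear combinations of the families $(z_v)$.

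Fix two distinct edges $s=[a\to b]$ and $t$. By biconnectivity (Menger's theorem, or Whitney's characterization of $2$-connected graphs), there is a simple cycle $C$ of $G$ through both $s$ and $t$. Let $C$ also denote the corresponding element of $H_1(G,\F_p)$, with coefficients $\pm1$ on its edges according to orientation and $0$ elsewhere. Deleting $s$ and $t$ from $C$ leaves two vertex-disjoint paths $P_1\ni b$ and $P_2\ni a$. One of these may be a single vertex, which happens if $s$ and $t$ share an endpoint. Define $\varepsilon\colon V\to\F_p$ by $\varepsilon=1$ on the vertices of $P_1$ and $\varepsilon=0$ everywhere else, and set $z_v\coloneq \varepsilon_v\, C$. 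For an edge $r=[x\to y]$ this gives $(z_y-z_x)_r=(\varepsilon_y-\varepsilon_x)C_r$. This is $0$ if $r\notin C$, since then $C_r=0$. It is also $0$ if $r\in C\setminus\{s,t\}$, because both endpoints of $r$ then lie on the same path $P_i$. On the edge $s$ it equals $(1-0)C_s=\pm1\neq 0$. So $\mu$ is supported on $\{s,t\}$ and $\mu_s\neq 0$.

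The remaining point, which I expect to be the main thing to check, is that $\mu_t=-\mu_s$, so that $\mu$ is a nonzero multiple of $e_s-e_t$. I would get this from a general identity. For any cycle $C$ and any function $\varepsilon$ on $V$,
$$
\sum_{r=[x\to y]}(\varepsilon_y-\varepsilon_x)C_r=\langle \delta\varepsilon, C\rangle=\langle \varepsilon,\partial C\rangle=0,
$$
because the pairing of a coboundary with a cycle vanishes. Hence $\sum_r\mu_r=0$ automatically, so $\mu_t=-\mu_s$. Rescaling $\varepsilon$ by $\mu_s^{-1}$ realizes exactly $e_s-e_t$. The same identity explains why the hypothesis $\sum_s\mu_s=0$ in the statement cannot be dropped for linear test functions.
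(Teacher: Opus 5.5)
Your proof is correct. It shares the paper's framework: linear test functions $g_v$ given by cycles in $H_1(G,\F_p)$, so that $\Delta_{e_s}(g_b-g_a)$ is the $s$-coefficient of $\gamma_b-\gamma_a$, followed by realizing differences $e_s-e_t$ and taking linear combinations. The difference lies in which pairs you realize and what you use to do it.

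The paper only treats two edges $[a\to v]$ and $[a'\to v]$ that share a vertex $v$. It sets $\gamma_v=\gamma$ and all other $\gamma_{v'}=0$, and it closes the path $a\to v\to a'$ into a cycle $\gamma$ using a path in $G-v$, which is exactly what the hypothesis provides. In your language this is the special case where $\varepsilon$ is the indicator function of $\{v\}$, i.e.\ one arc of the cycle is the single vertex $v$.

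Your version handles an arbitrary pair $s,t$ at once by cutting a common cycle into two arcs. The cost is that it needs Whitney's theorem that any two edges of a $2$-connected loopless graph lie on a common cycle. That is a standard but less elementary input than the one-line path argument the paper draws directly from the definition.

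In exchange, your spanning step is immediate, since all differences $e_s-e_t$ obviously span $\{\sum_s\mu_s=0\}$. The paper's phrase ``taking linear combinations'' tacitly uses that differences of edges sharing a vertex already span this hyperplane, i.e.\ that $G$ is connected. Your identity $\langle\delta\varepsilon,C\rangle=\langle\varepsilon,\partial C\rangle=0$ also cleanly proves that $\sum_s\mu_s=0$ is necessary for linear test functions, which the paper only mentions in a remark.
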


\begin{proof} 
Since $W=H^1(G,\mathbb{F}_p)$,
a linear function $g_v\colon W\to \F_p$ is specified
by a cycle $\gamma_v\in H_1(G,\mathbb{F}_p)$.
Then $(g_b-g_a)(w) = w(\gamma_{b}-\gamma_a)$
is evaluation on $\gamma_b-\gamma_a$
and so $$\Delta_{e_s}(g_b-g_a) = (w+e_s)(\gamma_b-\gamma_a)-w(\gamma_b-\gamma_a) = e_s(\gamma_b-\gamma_a)$$
is the coefficient of the edge $s$ in the cycle
$\gamma_b-\gamma_a$.

Fix a vertex $v\in V$ and set $\gamma_v=\gamma\in H_1(G,\mathbb{F}_p)$
and $\gamma_{v'}=0$ for all $v'\neq v$. 
 We
may as well reorient the edges so that
all edges connecting to $v\in V$ are incoming edges.
Then, 
$\gamma_b-\gamma_a=0$ unless one of $a$ or $b=v$,
so suppose  $b=v$. 
Then $$\Delta_{e_s}(g_b-g_a) = 
\begin{cases} e_s(\gamma) & \textrm{if }[a\to b]\textrm{ is incoming to }b=v,\\
0 & \textrm{otherwise.}
\end{cases}$$
Let $[a\to v]$ and $[a'\to v]$ be two (incoming) edges
incident to $v$. By hypothesis, deleting the vertex
$G - v$ gives a connected graph,
so we may connect $a'$ to $a$ by a path $\gamma_0$
employing no edges adjacent to $v$. 
Then the concatenation
$\gamma\coloneqq 
[a\to v\to a'\to \gamma_0]\in 
H_1(G,\mathbb{F}_p)$
is a cycle satisfying 
$$e_s(\gamma) = 
\begin{cases} 1 & \textrm{if }
s=[a\to v],\\
-1 & \textrm{if }
s=[a'\to v],\\
0 & \textrm{otherwise.} \\
\end{cases}$$
Taking linear combinations of such
test functions, we deduce the proposition.
\end{proof}

\begin{corollary}
For a biconnected graph $G$,
any $\mathbb{F}_p$-solution of $\underline{R}=M(G)$
in ${\rm Alb}_p(\underline{R})$ has a constant
color profile $(\lambda_s(b_s))_{s\in S} = 
(\lambda, \dots, \lambda)$.
\end{corollary}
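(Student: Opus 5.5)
The plan is to combine Proposition~\ref{prop:linear-test-function} with Proposition~\ref{prop:test-functions-1} and then use elementary linear algebra over $\F_p$.

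First, let $b=(b_s)_{s\in S}$ be an $\F_p$-solution and write $\lambda_s\coloneq\lambda_s(b_s)$. Fix any tuple $(\mu_s)_{s\in S}\in\F_p^S$ with $\sum_{s\in S}\mu_s=0$. Because $G$ is biconnected, Proposition~\ref{prop:linear-test-function} supplies linear test functions $g_v\colon W\to\F_p$, $v\in V$, such that $\Delta_{e_s}(g_b-g_a)$ is the constant $\mu_s$ for every edge $s=[a\to b]$.

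One point needs care here. In the proof of Proposition~\ref{prop:linear-test-function} the edges at the chosen vertex are reoriented. Reversing the orientation of an edge $s$ replaces $e_s$ by $-e_s$. This only changes signs of $\mu_s$, and the hypothesis $\sum\mu_s=0$ is still arranged by the construction, since the basic cycles give $\mu=e_{s}-e_{s'}$ in the original orientation up to sign. In any case, the proof constructs, for every vertex $v$ and every pair of edges $s,s'$ incident to $v$, test functions realizing $\mu=\pm(e_s-e_{s'})$.

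Next, Proposition~\ref{prop:test-functions-1} applied to these test functions gives $\sum_s\mu_s\lambda_s=0$. For the elementary choice above this reads $\lambda_s=\lambda_{s'}$ whenever $s$ and $s'$ share a vertex.

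Finally, to see that the color profile is constant, it suffices that the line graph of $G$ is connected. A biconnected graph is in particular connected, and for a connected graph any two edges are joined by a chain of edges in which consecutive edges share a vertex. Hence $\lambda_s=\lambda_{s'}$ for all $s,s'$, so $\lambda(b)=(\lambda,\dots,\lambda)$. Alternatively, one can use the full strength of Proposition~\ref{prop:linear-test-function}: $\lambda$ is orthogonal to every $\mu$ with $\sum\mu_s=0$, and the orthogonal complement of that hyperplane in $\F_p^S$ is the line spanned by $(1,\dots,1)$.

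The only step needing care is this orientation bookkeeping, namely that the tuples $\mu$ realized are really differences $e_s-e_{s'}$ in a common orientation convention, or are at least enough to span the hyperplane $\sum\mu_s=0$. Everything else is formal.
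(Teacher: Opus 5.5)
Your proposal is correct and is essentially the paper's proof. The paper simply combines Propositions \ref{prop:test-functions-1} and \ref{prop:linear-test-function} and uses that the orthogonal complement of the hyperplane $\sum_s\mu_s=0$ in $\F_p^S$ is spanned by $(1,\dots,1)$, which is your ``alternatively'' route; your main route just unpacks the proof of Proposition \ref{prop:linear-test-function} via connectivity of the line graph.

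The orientation worry is also moot. Reversing an edge $s$ negates both $e_s$ and $g_b-g_a$, so $\mu_s$ does not change sign at all. The basic cycle through $v$ therefore gives exactly $\mu=e_s-e_{s'}$.
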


\begin{proof}
The result follows from Propositions 
\ref{prop:test-functions-1} and \ref{prop:linear-test-function}.
\end{proof}

\section{Construction of test functions for $K_{2p+1}$}

In this section we aim to construct test functions $g_v$, $v\in V$, for the complete graph $K_{2p+1}$ on $2p+1$ vertices, to which we can apply Proposition \ref{prop:test-functions-1}.
 
\subsection{Coordinates of $W$}

Let
$$
  G=K_{2p+1}, \qquad  V=V(G)=\{0,1,\ldots,2p\},
$$
and orient every edge from the smaller endpoint to the larger endpoint.
The edge set of $G$ is denoted as before by $S$.
For $0\leq i <j\leq 2p$, we denote the oriented edge which points from $i$ to $j$ by $[i\to j]\in S$.
Recall that 
$$
  W=H^1(G,\F_p)=C^1(G,\F_p)/B^1(G,\F_p).
$$

We choose the spanning tree of $G$ consisting of the edges $[0\to i]$, $1\le i\le 2p$ that emerge from $0$.
Therefore, the non-tree edges
$$
e_{ij}\coloneq [i\to j],  \qquad  1\le i<j\le 2p,
$$
give a basis of $W=H^1(G,\F_p)$.
It is the linear functional
on $H_1(G,\mathbb{F}_p)$ which assigns to a cycle
the coefficient of the edge $[i\to j]$ in it. 
We denote the dual basis of $W^\vee$ by
$$
x_{ij}\colon W\to \F_p, \qquad  1\le i<j\le 2p.
$$
It is the linear functional on $H^1(G,\mathbb{F}_p)$ which evaluates
a cocycle on the circuit of $G$ completed
by adding the edge $[i\to j]$ to the specified
spanning tree. That is, $x_{ij}$ is evaluation
on the triangular cycle $[0\to i\to j\to 0]\in H_1(G,\mathbb{F}_p)$. 
These functions yield coordinates on $W$. 
Note that $\Delta_{e_{ij}}(x_{kl})=\delta_{ij,kl}$ because the $x_{ij}$ are dual coordinates to the basis $e_{ij}$.   
For $1\le j<i\le 2p$ we write
$$
e_{ij}\coloneq -e_{ji}\quad \text{and} \quad x_{ij}=-x_{ji}.
$$
We further put $e_{ii}=0$ and $x_{ii}=0$.

\begin{lemma} \label{lem:x_0i}
The following relation holds in $W$: 
$$
e_{0i}=\sum_{j=1}^{2p}e_{ij}. 
$$
\end{lemma}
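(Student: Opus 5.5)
The relation $e_{0i}=\sum_{j=1}^{2p}e_{ij}$ takes place in $W=H^1(G,\F_p)=C^1(G,\F_p)/B^1(G,\F_p)$. So we will exhibit the difference $e_{0i}-\sum_{j}e_{ij}$ as a coboundary in $C^1(G,\F_p)$; the natural candidate is $\pm\delta_i$. Equivalently, we can test the relation against the coordinates: since the $x_{kl}$ ($1\le k<l\le 2p$) form a basis of $W^\vee$, it suffices to evaluate both sides on each triangular cycle $[0\to k\to l\to 0]$. We take the first route, since it is the most direct.

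First, compute $\delta_i=\sum_{s\in S}\delta_i(s)e_s$ from the definition. Edges are oriented from smaller to larger endpoint. For $i\geq 1$, the edges incoming to $i$ are $[j\to i]$ with $0\le j<i$, and the outgoing edges are $[i\to j]$ with $j>i$. Hence
\[
\delta_i=e_{0i}+\sum_{1\le j<i}e_{ji}-\sum_{j>i}e_{ij}\in C^1(G,\F_p).
\]
Next, apply the sign conventions $e_{ji}=-e_{ij}$ and $e_{ii}=0$. These rewrite the middle sum as $-\sum_{j<i}e_{ij}$, so
\[
\delta_i=e_{0i}-\sum_{j=1}^{2p}e_{ij}.
\]
Since $\delta_i\in B^1(G,\F_p)=U_{\F_p}$, its image in $W$ vanishes, which gives the claimed relation.

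There is no real obstacle here. The only point requiring care is the sign convention: one must check that $\delta_i$ carries $+1$ on incoming edges, as in the definition of $\delta_v$, and that extending $e_{ij}$ antisymmetrically is compatible with identifying $e_{ji}$ with the reversed edge $-[i\to j]$ in $C^1$. An overall sign error would be harmless anyway, since only membership in $B^1$ matters.

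As a consistency check, we can compare with the coordinate description. The cycle $[0\to k\to l\to 0]$ passes through $[0\to i]$ with coefficient $+1$ if $k=i$ and $-1$ if $l=i$. On the other side, $\sum_j e_{ij}$ evaluated on this cycle picks up the edge $[k\to l]$ with the same signs, $+1$ when $k=i$ and $-1$ when $l=i$. So both sides agree on every basis cycle, as they must.
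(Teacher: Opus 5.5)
Your proposal is correct and follows the paper's proof. Like the paper, you write the coboundary $\delta_i$ as $\sum_{j<i}e_{ji}-\sum_{j>i}e_{ij}$, use $e_{ji}=-e_{ij}$ and $e_{ii}=0$ to rewrite it as $e_{0i}-\sum_{j=1}^{2p}e_{ij}$, and conclude because $\delta_i\in B^1(G,\F_p)$ vanishes in $W$. Your extra check against the triangular cycles $[0\to k\to l\to 0]$ is also correct, though not needed.
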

\begin{proof}
Consider the vertex $v=i$ and the associated coboundary $\delta_i\in B^1(G,\F_p)$.
Then 
$$
\delta_i=\sum_{j=0}^{i-1} e_{ji}-\sum_{j=i+1}^{2p} e_{ij}
$$ 
vanishes in $W$.
Since $ e_{ij}= -e_{ji}$ and $e_{ii}=0$, we get $e_{0i}=\sum_{j=1}^{2p}e_{ij}$, as claimed.
\end{proof}

\subsection{Existence of test functions for $K_{2p+1}$} 

We aim to prove the following.

\begin{theorem} \label{thm:existence-of-test-functions-K_2p+1}
Let $p$ be a prime number and let $G=K_{2p+1}$ be the complete graph on $2p+1$ vertices.
Then there are polynomial test functions $g_i\colon W=H^1(G,\F_p)\to \F_p$, $0\leq i\leq 2p$, which satisfy
$$
\Delta_{e_{ij}}(g_j-g_i)=\begin{cases}
1\quad &\text{if $(i,j)=(0,2p)$;}\\
0\quad &\text{if $0\leq i<j\leq 2p$ with $(i,j)\neq (0,2p)$.}
\end{cases}
$$
\end{theorem}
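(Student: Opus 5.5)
The plan is to reduce the statement to finding polynomials in the coordinates $x_{ij}$ and to use the operator calculus of Definition \ref{def:delta}.

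\emph{Translating the conditions.} Every function $W\to\F_p$ is a polynomial in the $x_{ij}$ with each variable of degree $\le p-1$. Since $\Delta_{e_{ij}}(x_{kl})=\delta_{ij,kl}$ for $1\le i<j\le 2p$, the operator $\Delta_{e_{ij}}$ is the forward difference in the single variable $x_{ij}$. By Lemma \ref{lem:x_0i} we have $e_{0i}=\sum_j e_{ij}$, so $\Delta_{e_{0i}}$ is the difference operator for the simultaneous shift $x_{ij}\mapsto x_{ij}+1$ for all $j\neq i$. Because of the convention $x_{ji}=-x_{ij}$, this shift is a $-1$ in the variables $x_{ji}$ with $j<i$. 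The conditions on $(g_0,\dots,g_{2p})$ then read as follows:
\begin{enumerate}
\item $g_j-g_i$ is independent of the variable $x_{ij}$ for $1\le i<j\le 2p$;
\item $g_i-g_0$ is invariant under the $i$-th ``star shift'' for $1\le i<2p$;
\item for $i=2p$, the star shift increases $g_{2p}-g_0$ by exactly $1$.
\end{enumerate}
I would normalize $g_0=0$. Then I would look for $g_i$ built from binomial-coefficient functions $\binom{x}{n}$, whose differences are $\binom{x}{n-1}$, so that the computations stay in a basis adapted to $\Delta$.

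\emph{Ansatz.} The linear test functions of Proposition \ref{prop:linear-test-function} can only produce $\mu$ with $\sum_s\mu_s=0$. So the $g_i$ must have high degree, and the relevant obstruction is the top-degree term $\prod x^{p-1}$, which sums to a nonzero value over $\F_p$. I would organize the $2p$ nonzero vertices into $p$ pairs, say $\{1,2\},\{3,4\},\dots,\{2p-1,2p\}$. For each $i$, I would take $g_i$ to be a product, over $i$'s $p-1$ partners in a chosen set of ``rows'', of degree-$(p-1)$ factors in suitable combinations of the variables $x_{ik}$. These combinations would be linear forms $y_{i}=\sum_k c_k x_{ik}$ that are invariant under the star shifts at the other vertices. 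The shift at $i$ would act on the product by $\Delta$ of $\binom{y}{p-1}$-type terms, and the exponent $2p-1=\deg(i)$ minus one is exactly what is needed to make the top coefficient survive. The count $2p+1$ vertices enters here: a vertex has $2p$ neighbours, enough to form $p$ shifted factors. The intention is that only the final vertex $2p$ picks up the constant $1$, via the identity $\Delta^{p-1}\binom{x}{p-1}=1$, or equivalently $\sum_{x\in\F_p}x^{p-1}=-1$.

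\emph{Verification and the main obstacle.} Given a candidate family, conditions (1)--(3) become finite identities. I would check them with the Leibniz rule, Lemma \ref{lem:Leibniz-rule}, applied factor by factor. The $S_{2p-1}$-symmetry fixing $0$ and $2p$ reduces the verification to a handful of edge types: $[i\to j]$ with $i,j<2p$; $[i\to 2p]$; $[0\to i]$; and $[0\to 2p]$.

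I expect the main obstacle to be condition (1) for the edges $[i\to j]$ with $1\le i<j\le 2p$. There the difference $g_j-g_i$ must be independent of $x_{ij}$, which forces the $x_{ij}$-dependence of $g_i$ and $g_j$ to match exactly. At the same time, the star shifts must kill $g_i-g_0$ except at $i=2p$. If a closed-form product fails, I would first try to solve the linear system degree by degree. In that approach I would start from the required top coefficient of $g_{2p}$ and correct lower-order terms inductively, using that $\Delta_{e}$ is surjective onto functions of lower degree in the shifted direction. Only at the last step would primality of $p$ and the value $2p+1$ be needed, to guarantee that the top-degree obstruction does not vanish.
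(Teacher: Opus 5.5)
Your translation of the conditions into (1)--(3) is correct, and so is the normalization $g_0=0$. The gap is that the proposal never produces the test functions, and their existence is the entire content of the theorem.

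\textbf{The product ansatz does not work as described.} For $j\neq i$, the star shift $e_{0j}=\sum_k e_{jk}$ moves $x_{ij}$ by $-1$, because it contains $e_{ji}=-e_{ij}$. It fixes every other $x_{ik}$. Hence a linear form $\sum_k c_kx_{ik}$ that is invariant under the star shifts at all other vertices must be zero. Moreover, each condition is a single first difference along one vector, so the iterated identity $\Delta^{p-1}\binom{x}{p-1}=1$ has nowhere to enter. No terms of degree $p-1$ in a single variable are needed either: the test functions can be taken multilinear of degree $p$.

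\textbf{The degree-by-degree fallback is not an argument.} Surjectivity of each individual $\Delta_e$ says nothing about the coupled system over all edges, and existence genuinely depends on the graph. For $K_4$, such test functions exist for no prime $p$. By Proposition \ref{prop:test-functions-1} and edge-transitivity, they would force every $\F_p$-solution of $M(K_4)$ to have trivial color profile. Theorem \ref{thm:EGFS} would then contradict the fact that every principally polarized abelian threefold carries a curve of class $\theta_2$. Your sketch never locates where primality of $p$ and the number $2p+1$ are actually used.

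\textbf{The missing idea.} Put $z_i=x_{i,2p}$ for $1\le i\le 2p-1$. Since $e_{0,2p}=-\sum_i e_{i,2p}$, the star shift at $2p$ moves all $2p-1$ variables $z_i$ by the same amount. The generating function $\prod_i(1+z_it)$ gives
\[
\sigma_p(z_1+1,\dots,z_{2p-1}+1)-\sigma_p(z_1,\dots,z_{2p-1})=\sum_{j=0}^{p-1}\binom{2p-1-j}{p-j}\sigma_j=1\in\F_p ,
\]
since $\binom{2p-1}{p}\equiv 1$ and $\binom{2p-1-j}{p-j}\equiv 0 \pmod p$ for $1\le j\le p-1$. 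This is exactly where $p$ and $2p+1$ enter.

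So take $g_{2p}=0$ and $g_0=\sigma_p(z_1,\dots,z_{2p-1})$; in your normalization this reads $g_0=0$, $g_{2p}=-\sigma_p$. This choice disturbs only the edges $[0\to i]$ with $1\le i\le 2p-1$. There the star shift moves only $z_i$ among the $z$'s, so it changes $\sigma_p$ by $\sigma_{p-1}(z_1,\dots,\widehat{z_i},\dots,z_{2p-1})$.

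\textbf{The compensation.} Set $g_i=\sum_j x_{ij}c_{ij}$ with $c_{ij}\in\F_p[z_1,\dots,z_{2p-1}]$ satisfying three conditions:
\begin{enumerate}
\item $c_{ij}=-c_{ji}$;
\item $c_{ij}$ is independent of $z_i$ and $z_j$;
\item $\sum_j c_{ij}=\sigma_{p-1}(z_1,\dots,\widehat{z_i},\dots,z_{2p-1})$.
\end{enumerate}
Skew-symmetry settles the edges $[i\to j]$ with $j<2p$. Independence of $z_i$ settles the edges $[i\to 2p]$. The row-sum condition settles the edges $[0\to i]$.

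\textbf{Existence of the $c_{ij}$.} Compare coefficients of the monomials $z_J$ with $|J|=p-1$. This reduces the problem to finding a skew-symmetric $p\times p$ matrix over $\F_p$ whose row sums all equal $1$. Such a matrix exists because $-(p-1)=1$ in $\F_p$.
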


Since the edge $[0\to 2p]$, which plays a special role in the above argument, can be replaced up to relabelling by any other edge, the above theorem implies:

\begin{corollary} \label{cor:thm:existence-of-test-functions-K_2p+1}
In the above notation, the image of the map
\begin{align*} 
\operatorname{Maps}(W,\F_p)^V&\longrightarrow \operatorname{Maps}(W,\F_p)^S, \\
(g_i)_{0\leq i\leq 2p}&\mapsto (\Delta_{e_{ij}}(g_j-g_i))_{0\leq i<j\leq 2p}
\end{align*}
contains $\F_p^S$, where $\F_p\subset  \operatorname{Maps}(W,\F_p)$ denotes the subspace of constant functions $W\to \F_p$.
\end{corollary}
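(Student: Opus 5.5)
The plan is to deduce the corollary from Theorem \ref{thm:existence-of-test-functions-K_2p+1} using two facts: the map in question is $\F_p$-linear, and the symmetric group on $V$ acts on everything. Write $\Phi$ for the map $(g_i)\mapsto (\Delta_{e_{ij}}(g_j-g_i))_{i<j}$. It is $\F_p$-linear, because $\Delta_e$ is linear. It therefore suffices to show that, for every edge $[a\to b]\in S$ with $0\le a<b\le 2p$, the image of $\Phi$ contains the tuple $\chi_{ab}$. This tuple is the constant function $1$ in the $[a\to b]$-slot and $0$ in all other slots. A general element $(\mu_s)\in \F_p^S$ is then $\sum_s \mu_s\chi_s$, which lies in the image by linearity.

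First I extend the convention $e_{ij}=-e_{ji}$ to all $i,j\in V$, including $0$; this is the class in $W$ of the oriented edge $[i\to j]$. I then check a symmetry of the defining condition. If $h=g_j-g_i$ satisfies $\Delta_{e_{ij}}h\equiv c$, then
$$\Delta_{e_{ji}}(g_i-g_j)(w)=-(h(w-e_{ij})-h(w))=\Delta_{e_{ij}}h(w-e_{ij})=c.$$
Hence the condition ``$\Delta_{e_{ij}}(g_j-g_i)$ is the constant $c$'' depends only on the unordered edge $\{i,j\}$ and not on its orientation.

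Now fix $a<b$ and choose a permutation $\sigma$ of $V$ with $\sigma(0)=a$ and $\sigma(2p)=b$. Since $\sigma$ is a graph automorphism of $K_{2p+1}$, it induces linear automorphisms of $C^1(G,\F_p)$ preserving $B^1(G,\F_p)$, since $\delta_v\mapsto\delta_{\sigma(v)}$. It therefore induces a linear automorphism $\varphi\colon W\to W$ with $\varphi(e_{ij})=e_{\sigma(i)\sigma(j)}$ for all $i,j$; the signs coming from orientation are absorbed by the convention above. Let $(g_i)$ be the test functions of Theorem \ref{thm:existence-of-test-functions-K_2p+1}, and set $g'_{\sigma(i)}\coloneq g_i\circ\varphi^{-1}$. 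Then a direct computation gives
$$\Delta_{e_{\sigma(i)\sigma(j)}}(g'_{\sigma(j)}-g'_{\sigma(i)})(w)=\Delta_{e_{ij}}(g_j-g_i)(\varphi^{-1}(w)).$$
This is the constant $1$ if $\{i,j\}=\{0,2p\}$ and $0$ otherwise. By the orientation symmetry, $\Phi((g'_v)_v)=\chi_{ab}$, which completes the argument.

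The only step requiring care is the bookkeeping of orientations: $\sigma$ need not preserve the ordering $i<j$, and $\varphi$ must be well defined on the quotient $W$. Both points are handled by the observation that $\sigma$ permutes the coboundaries $\delta_v$ and by the sign-symmetry check above. Beyond this, no new construction is needed.
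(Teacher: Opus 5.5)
Your proposal is correct and follows the paper's argument: the paper deduces the corollary in one line, observing that the edge $[0\to 2p]$ can be replaced by any other edge by relabelling the vertices of $K_{2p+1}$, with linearity of the map giving all of $\F_p^S$. You supply the details the paper leaves implicit — the automorphism of $W$ induced by a vertex permutation (well defined because it sends $\delta_v$ to $\delta_{\sigma(v)}$) and the orientation-reversal check — and both are carried out correctly.
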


Theorem \ref{thm:K_2p+1} follows easily from Corollary \ref{cor:thm:existence-of-test-functions-K_2p+1} and Proposition \ref{prop:test-functions-1}.

\subsection{Proof of Theorem \ref{thm:existence-of-test-functions-K_2p+1}}
The remainder of this section is devoted to the proof of Theorem \ref{thm:existence-of-test-functions-K_2p+1}.
That is, we aim to construct polynomial test functions $g_i\colon W\to \F_p$ such that $\Delta_{e_{ij}}(g_j-g_i) $ is constant, equal to $1$ if $(i,j)=(0,2p)$ and equal to zero for all other indices $0\leq i<j\leq 2p$.
We start the construction by setting 
\begin{align} \label{def:g_2p}
g_{2p}\coloneq 0.
\end{align}

\begin{remark}
It is easy to see that linear functions $g_i$ will not work, so we are looking for polynomials of higher order.
Indeed, if $g_i$ is linear for all $i$, then $\Delta_{e_{ij}}(g_j-g_i) $ is automatically constant, equal to $\mu_{e_{ij}}\in \F_p$, say.
But $\sum_{0\leq i<j\leq 2p}  \mu_{e_{ij}}=0$, where the sum runs through all edges of $G$;
cf.~Proposition \ref{prop:linear-test-function}.
\end{remark}

The edge $e_{0,2p}$ plays a special role in Theorem \ref{thm:existence-of-test-functions-K_2p+1}.
By Lemma \ref{lem:x_0i},
$$
e_{0,2p}=-\sum_{i=1}^{2p-1}e_{i,2p}.
$$
Therefore, the coordinates $x_{i,2p} $ with $i=1,\dots ,2p-1$ play a special role and we introduce the notation
$$
z_i\coloneq x_{i,2p}\qquad \text{with $i\in \{1,\dots ,2p-1\}$}.
$$ 
Using these coordinates, Lemma \ref{lem:x_0i} together with the fact that $g_{2p}=0$ implies:
\begin{align} \label{eq:Delta_0,2p}
\Delta_{e_{0,2p}}(g_{2p}-g_0) 
&=g_0((x_{ij}) ,z_1,\dots ,z_{2p-1})-g_0((x_{ij}) ,z_1-1,z_2-1,\dots ,z_{2p-1}-1) ,
\end{align}
where $(x_{ij})$ stands for the remaining coordinate functions $(x_{ij})_{1\leq i<j\leq 2p-1}$.
We want to find a polynomial function $g_0$ such that the above difference is constant.

\begin{lemma} \label{lem:symmetric-functions}
Let $0\leq d\leq n$, and let $\sigma_d(z_1,\dots,z_{n})\in \F_p[z_1,\dots,z_{n}]$ be the elementary symmetric polynomial of degree $d$ over $\F_p$.
Then
\begin{align} \label{eq:lem:symmetric-functions}
\sigma_d(z_1+1,\dots,z_{n}+1)-\sigma_d(z_1,\dots,z_{n})=\sum_{j=0}^{d-1}\binom{n-j}{d-j}\sigma_j(z_1,\dots,z_{n}).
\end{align}
Here we use the convention $\sigma_0=1$.
In particular, for $d=p$ and $n=2p-1$, one has
$$
\sigma_p(z_1+1,\dots,z_{2p-1}+1)-\sigma_p(z_1,\dots,z_{2p-1})=1.
$$
\end{lemma}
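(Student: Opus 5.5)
The plan is to expand $\sigma_d$ of the shifted variables directly as a sum over subsets; a generating-function argument would serve as a cross-check. Write
$$
\sigma_d(z_1+1,\dots,z_n+1)=\sum_{|A|=d}\prod_{a\in A}(z_a+1)=\sum_{|A|=d}\sum_{B\subseteq A}\prod_{b\in B}z_b .
$$
Exchanging the order of summation, each subset $B$ with $|B|=j\le d$ occurs once for every superset $A\supseteq B$ of size $d$. There are $\binom{n-j}{d-j}$ such supersets. Hence
$$
\sigma_d(z+1)=\sum_{j=0}^{d}\binom{n-j}{d-j}\sigma_j(z).
$$
The $j=d$ term is $\sigma_d(z)$, and subtracting it yields \eqref{eq:lem:symmetric-functions}. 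As a cross-check, the same identity falls out of comparing coefficients of $t^{d}$ in
$$
\prod_{i}\bigl(1+t(z_i+1)\bigr)=\prod_i\bigl((1+t)+tz_i\bigr)=\sum_j\sigma_j(z)\,t^j(1+t)^{n-j}.
$$
The argument works over $\Z$ and therefore over $\F_p$.

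For the special case $d=p$, $n=2p-1$, the coefficients to evaluate are $\binom{2p-1-j}{p-j}$ modulo $p$ for $0\le j\le p-1$. The case $j=0$ gives $\binom{2p-1}{p}=\binom{2p-1}{p-1}$, which is $\equiv 1 \pmod p$. One way to see this is Lucas' theorem: $2p-1$ has base-$p$ digits $(1,p-1)$ and $p-1$ has digits $(0,p-1)$, so the product of digit binomials is $\binom{1}{0}\binom{p-1}{p-1}=1$. Alternatively, $\binom{2p-1}{p-1}=\prod_{k=1}^{p-1}\frac{p+k}{k}\equiv 1$.

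For $1\le j\le p-1$, set $m=2p-1-j$, so that $p\le m\le 2p-2$, and $r=p-j$, so that $1\le r\le p-1$. In base $p$, $m$ has digits $(1,\,m-p)$, and $r$ has top digit $0$ and bottom digit $r$. Since $m-p=p-1-j<p-j=r$, the bottom digit of $r$ exceeds that of $m$, and Lucas gives $\binom{m}{r}\equiv\binom{1}{0}\binom{m-p}{r}=0$. One can also argue without Lucas: $m\ge p>m-r=p-1$, so the factor $p$ appears in $m!$ but not in $r!(m-r)!$. Therefore only the $j=0$ term survives, and with $\sigma_0=1$ the difference is $1$.

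The only real obstacle is this mod-$p$ bookkeeping, namely checking that every coefficient with $j\ge1$ vanishes. The counting of supersets is routine.
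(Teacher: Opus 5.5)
Your proof is correct, and the base-$p$ digit computations in your Lucas argument check out. The difference from the paper lies in packaging rather than substance.

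The paper proves the identity only through the generating function $\prod_i(1+z_it)=\sum_j\sigma_jt^j$. It replaces each $z_i$ by $z_i+1$, factors out $(1+t)^n$, and expands in powers of $t/(1+t)$. Your main argument is instead the direct subset expansion together with the count of $\binom{n-j}{d-j}$ supersets. Your cross-check $\prod_i\bigl((1+t)+tz_i\bigr)=\sum_j\sigma_jt^j(1+t)^{n-j}$ is the paper's computation, done without dividing by $1+t$. The double count explains the coefficient $\binom{n-j}{d-j}$ combinatorially, while the generating function is a more compact algebraic bookkeeping of the same expansion.

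For the special case, the paper uses exactly your Lucas-free argument. For $j\geq 1$ it notes that $p\mid m!$ but $p\nmid r!\,(m-r)!$, and for $j=0$ it computes $\binom{2p-1}{p}=\frac{(2p-1)\cdots(p+1)}{(p-1)!}\equiv 1$. Lucas' theorem explains both cases uniformly, but it is more machinery than this step needs.
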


\begin{proof}
Write $\sigma_j=\sigma_j(z_1,\dots,z_{n})$.
We use the generating function for elementary symmetric polynomials:
$$
\prod_{i=1}^{n}(1+z_i t)=\sum_{j=0}^{n}\sigma_j t^j.
$$
After replacing each $z_i$ by $z_i+1$, we get
$$
\prod_{i=1}^{n}(1+(z_i+1)t)=\prod_{i=1}^{n}(1+t+z_i t).
$$
Factoring $1+t$ from each term gives
\begin{align*}
\prod_{i=1}^{n}(1+(z_i+1)t)
&=(1+t)^{n}\prod_{i=1}^{n}\left(1+\frac{z_i t}{1+t}\right) \\
&=(1+t)^{n}\sum_{j=0}^{n}\sigma_j\left(\frac{t}{1+t}\right)^j \\
&= \sum_{j=0}^{n}\sigma_j t^j(1+t)^{n-j}.
\end{align*} 
Comparing coefficients of $t^d$, we obtain 
$$
\sigma_d(z_1+1,\dots,z_{n}+1)=\sigma_d+\sum_{j=0}^{d-1}\binom{n-j}{d-j}\sigma_j.
$$
This proves \eqref{eq:lem:symmetric-functions}.

To prove the simplified formula  when $d=p$ and $n=2p-1$, it then suffices to show the following identity in $\F_p$: 
$$
\binom{2p-1-j}{p-j}=\begin{cases}
1,&\quad \text{for }j=0,\\
0,&\quad \text{for }1\leq j\leq p-1.
\end{cases}
$$
The case $1\leq j\leq p-1$ is clear, because in that case $p\leq 2p-1-j$ but $p>p-j$ and $p>2p-1-j-(p-j)=p-1$.
Finally, for $j=0$ we have
\begin{align*}
\binom{2p-1}{p} 
 =\frac{(2p-1)(2p-2)\cdots (p+1) }{(p-1)!} 
 =\frac{(p-1)(p-2)\cdots 1 }{(p-1)!}=1 \in \F_p.
\end{align*}  
This proves the lemma.
\end{proof}

Motivated by the above lemma, we set
\begin{align} \label{def:g_0}
g_{0}\coloneq \sigma_p(z_1,\dots,z_{2p-1}).
\end{align}
By \eqref{eq:Delta_0,2p} and Lemma \ref{lem:symmetric-functions}, we then have
$$
\Delta_{e_{0,2p}}(g_{2p}-g_0)=1.
$$

\begin{lemma} \label{lem:Delta-g_i-necessary}
Let $g_i\colon W\to \F_p$ be polynomial functions for $i=0,\dots ,2p$.
Assume that $g_{2p}=0$ and $g_0$ as in \eqref{def:g_0}.
Then  $\Delta_{e_{ij}}(g_j-g_i)$ satisfy the conditions from Theorem \ref{thm:existence-of-test-functions-K_2p+1} if and only if the following holds: 
\begin{align}
\Delta_{e_{0i}}(g_i) &=\sigma_{p-1}(z_1,\dots,\widehat{z_i},\dots,z_{2p-1})\quad &\text{for $1\leq i\leq 2p-1$}; \label{item:lem:Delta-g_i-necessary:1}\\ 
\Delta_{e_{ij}}(g_i) &= \Delta_{e_{ij}}(g_j) \quad &\text{for $1\leq i< j\leq 2p$}.\label{item:lem:Delta-g_i-necessary:2}
\end{align}
\end{lemma}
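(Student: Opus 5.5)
We check the conditions of Theorem \ref{thm:existence-of-test-functions-K_2p+1} edge by edge, splitting the edges of $G$ into three types: the special edge $[0\to 2p]$, the edges $[0\to i]$ with $1\leq i\leq 2p-1$, and the edges $[i\to j]$ with $1\leq i<j\leq 2p$.

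\emph{The edge $[0\to 2p]$.} The condition $\Delta_{e_{0,2p}}(g_{2p}-g_0)=1$ holds automatically. This follows from $g_{2p}=0$, the choice \eqref{def:g_0}, equation \eqref{eq:Delta_0,2p} and Lemma \ref{lem:symmetric-functions}, as already observed before the statement.

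\emph{The edges $[i\to j]$ with $1\leq i<j\leq 2p$.} By linearity of $\Delta_{e_{ij}}$, the condition $\Delta_{e_{ij}}(g_j-g_i)=0$ is literally the same as \eqref{item:lem:Delta-g_i-necessary:2}. For $j=2p$ this reads $\Delta_{e_{i,2p}}(g_i)=0$, since $g_{2p}=0$.

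\emph{The edges $[0\to i]$ with $1\leq i\leq 2p-1$.} Here the condition is $\Delta_{e_{0i}}(g_i)=\Delta_{e_{0i}}(g_0)$, so it is equivalent to \eqref{item:lem:Delta-g_i-necessary:1} once we prove
$$
\Delta_{e_{0i}}(\sigma_p(z_1,\dots,z_{2p-1}))=\sigma_{p-1}(z_1,\dots,\widehat{z_i},\dots,z_{2p-1}).
$$
To prove this, express $e_{0i}$ in the basis $(e_{kl})_{1\leq k<l\leq 2p}$ using Lemma \ref{lem:x_0i}:
$$
e_{0i}=\sum_{j=1}^{2p} e_{ij}.
$$
We only need its effect on the coordinates $z_k=x_{k,2p}$ with $1\leq k\leq 2p-1$, because $g_0$ depends on nothing else. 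By the sign conventions $e_{ij}=-e_{ji}$, the only term contributing to any $z_k$ is $e_{i,2p}$, which has coefficient $1$ and shifts $z_i$ by $1$. Translation by $e_{0i}$ therefore sends $z_i\mapsto z_i+1$, leaves all $z_k$ with $k\neq i$ fixed, and only changes the other coordinates $x_{kl}$ with $l<2p$, which $g_0$ does not see.

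Now write $\sigma_p=z_i\,\sigma_{p-1}(\widehat{z_i})+\sigma_p(\widehat{z_i})$, where $\widehat{z_i}$ means the variable $z_i$ is omitted. Since $\sigma_p$ is affine-linear in $z_i$, shifting $z_i$ by $1$ changes it by exactly $\sigma_{p-1}(z_1,\dots,\widehat{z_i},\dots,z_{2p-1})$. This is the required identity.

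All three cases exhaust the edges, so the conditions of Theorem \ref{thm:existence-of-test-functions-K_2p+1} are equivalent to \eqref{item:lem:Delta-g_i-necessary:1} together with \eqref{item:lem:Delta-g_i-necessary:2}. The only point needing care is the bookkeeping of signs in translating $e_{0i}$ into the $z$-coordinates; the rest is formal.
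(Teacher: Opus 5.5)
Your proposal is correct and follows the paper's proof essentially step for step. You handle the edge $[0\to 2p]$ via \eqref{eq:Delta_0,2p} and Lemma \ref{lem:symmetric-functions}, observe that the edges $[i\to j]$ with $i\geq 1$ are literally condition \eqref{item:lem:Delta-g_i-necessary:2}, and treat the edges $[0\to i]$ by using Lemma \ref{lem:x_0i} to see that translation by $e_{0i}$ shifts $z_i$ by $1$, followed by the decomposition $\sigma_p=z_i\sigma_{p-1}(\widehat{z_i})+\sigma_p(\widehat{z_i})$. Your explicit check that no other $z_k$ is moved is a detail the paper leaves implicit.
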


\begin{proof}
Condition \eqref{item:lem:Delta-g_i-necessary:2} is equivalent to $\Delta_{e_{ij}}(g_j-g_i)=0$ for $1\leq i<j\leq 2p$.
By Lemma \ref{lem:symmetric-functions}, it thus suffices to show that \eqref{item:lem:Delta-g_i-necessary:1} is equivalent to 
$
\Delta_{e_{0i}}(g_i-g_{0})=0
$
for $1\leq i\leq 2p-1$.
The latter is equivalent to 
\begin{align*}
\Delta_{e_{0i}}(g_i)=\Delta_{e_{0i}}(g_0)
&=\Delta_{\sum_{j=1}^{2p} e_{ij}}(g_0)\\
&=\sigma_p(z_1,\dots, z_{i-1},z_i+1,z_{i+1},\dots, z_{2p-1})-\sigma_p(z_1, \dots, z_{2p-1}) ,
\end{align*}
where the first equality follows from Lemma \ref{lem:x_0i}.
We may then write
$$
\sigma_p(z_1, \dots, z_{2p-1})=z_i\sigma_{p-1}(z_1, \dots,\widehat z_i,\dots, z_{2p-1})+\sigma_p(z_1, \dots,\widehat z_i,\dots z_{2p-1}).
$$
Using this, we find
\begin{align}\label{eq:Delta-sigma_p=hat-sigma-p-1}
\sigma_p(z_1,\dots, z_{i-1},z_i+1,z_{i+1},\dots, z_{2p-1})-\sigma_p(z_1, \dots, z_{2p-1}) =
 \sigma_{p-1}(z_1, \dots,\widehat z_i,\dots, z_{2p-1}).
\end{align}
Altogether, the condition $
\Delta_{e_{0i}}(g_{i}-g_0)=0
$
for $1\leq i\leq 2p-1$ is thus equivalent to \eqref{item:lem:Delta-g_i-necessary:1}, as we want.
\end{proof}

For $1\leq i\leq 2p-1$, we now try the following ``Ansatz'':
\begin{align} \label{def:g_i}
g_i\coloneq \sum_{j=1}^{2p-1} x_{ij}c_{ij} ,
\end{align}
where $c_{ij}\in \F_p[z_1,\dots ,z_{2p-1}]$. As before we have $x_{ij}=-x_{ji}$ and  $x_{ii}=0$; we impose accordingly that $c_{ii}=0$.

\begin{lemma} \label{lem:c_ij}
Consider the test functions $g_i\colon W\to \F_p$ given by  \eqref{def:g_2p}, \eqref{def:g_0},  and  \eqref{def:g_i}.
For $e\in W$, let $t_e\colon W\to W$ be the translation by $e$.
Then
\begin{align}
\Delta_{e_{ij}}(g_i) &= c_{ij}\quad \text{and}\quad \Delta_{e_{ij}}(g_j)  = -c_{ji}  \quad &\text{for $1\leq i< j\leq 2p-1$};\label{item:lem:c_ij:2}\\
\Delta_{e_{0i}}(g_i) &=\sum_{j=1}^{2p-1} c_{ij}\circ t_{e_{i,2p}}+x_{ij}\Delta_{e_{i,2p}}c_{ij} \quad &\text{for $1\leq i\leq 2p-1$}; \label{item:lem:c_ij:1}\\ 
\Delta_{e_{i,2p}}(g_i) &= \sum_{l=1}^{2p-1}x_{il}\Delta_{e_{i,2p}}c_{il} \quad &\text{for $1\leq i\leq 2p-1$}.\label{item:lem:c_ij:4} 
\end{align} 
\end{lemma}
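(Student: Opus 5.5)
The plan is to compute each translate $g_i\circ t_e$ directly in the coordinates $x_{kl}$ ($1\le k<l\le 2p$) and then subtract $g_i$. The two inputs are the duality $\Delta_{e_{kl}}(x_{mn})=\delta_{kl,mn}$ and the fact that the $c_{ij}$ are polynomials in the $z$-variables alone.

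First, record how each basis translation acts on coordinates, using only the duality relation. For $1\le k<l\le 2p$, the translation $t_{e_{kl}}$ adds $1$ to $x_{kl}$ and fixes every other coordinate. With the conventions $e_{lk}=-e_{kl}$ and $x_{lk}=-x_{kl}$, this remains true for $k>l$: then $t_{e_{kl}}$ adds $-1$ to $x_{lk}$, which again adds $+1$ to $x_{kl}$. In particular, $t_{e_{kl}}$ with $k,l\le 2p-1$ fixes all $z_m=x_{m,2p}$. Also, $t_{e_{i,2p}}$ adds $1$ to $z_i$ and fixes all $x_{kl}$ with $k,l\le 2p-1$. Hence each $c_{ij}\in\F_p[z_1,\dots,z_{2p-1}]$ is invariant under $t_{e_{kl}}$ for $k,l\le 2p-1$.

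For \eqref{item:lem:c_ij:2}, take $1\le i<j\le 2p-1$. In $g_i=\sum_l x_{il}c_{il}$, only the summand $l=j$ changes under $t_{e_{ij}}$, and the $c_{il}$ are fixed. This gives $\Delta_{e_{ij}}(g_i)=c_{ij}$. In $g_j=\sum_l x_{jl}c_{jl}$, only the summand $x_{ji}c_{ji}=-x_{ij}c_{ji}$ changes, so $\Delta_{e_{ij}}(g_j)=-c_{ji}$.

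For \eqref{item:lem:c_ij:1}, I use Lemma \ref{lem:x_0i} to write
$$
e_{0i}=\sum_{j=1}^{2p-1}e_{ij}+e_{i,2p}.
$$
By the first step, $t_{e_{0i}}$ adds $1$ to each $x_{ij}$ with $j\le 2p-1$, $j\ne i$, and adds $1$ to $z_i$. No other coordinate appearing in $g_i$ moves. Therefore
$$
g_i\circ t_{e_{0i}}=\sum_{j}(x_{ij}+1)\,\bigl(c_{ij}\circ t_{e_{i,2p}}\bigr),
$$
where the $j=i$ term may be included harmlessly since $c_{ii}=0$. Subtracting $g_i=\sum_j x_{ij}c_{ij}$ and writing
$$
x_{ij}\bigl(c_{ij}\circ t_{e_{i,2p}}-c_{ij}\bigr)=x_{ij}\Delta_{e_{i,2p}}c_{ij}
$$
gives the claimed formula; this is just the second Leibniz rule of Lemma \ref{lem:Leibniz-rule} applied summand by summand. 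For \eqref{item:lem:c_ij:4}, the translation $t_{e_{i,2p}}$ fixes every $x_{il}$ with $l\le 2p-1$ and only shifts $z_i$ inside the $c_{il}$. So $\Delta_{e_{i,2p}}(g_i)=\sum_l x_{il}\Delta_{e_{i,2p}}c_{il}$.

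There is no genuine obstacle here; the only delicate point is sign bookkeeping. One must check that $e_{ij}$ with $j<i$ still shifts $x_{ij}$ by $+1$, and confirm that the $z_m$ are the coordinates $x_{m,2p}$, which the translations $e_{kl}$ with $k,l<2p$ leave untouched.
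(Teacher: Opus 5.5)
Your proposal is correct and follows essentially the same route as the paper: you decompose $e_{0i}=\sum_{j=1}^{2p}e_{ij}$ via Lemma~\ref{lem:x_0i} and track how each translation moves the coordinates $x_{ij}$ and $z_m$, which is the paper's Leibniz-rule computation written out at the level of translates. Your explicit checks supply the sign bookkeeping the paper leaves implicit: that $t_{e_{ij}}$ shifts $x_{ij}$ by $+1$ whatever the order of $i,j$, and that the $z_m$ are fixed by $t_{e_{kl}}$ for $k,l\le 2p-1$.
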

\begin{proof}
In all cases, $1\leq i\leq 2p-1$, and so $g_i$ is given by \eqref{def:g_i}.

Let us first prove \eqref{item:lem:c_ij:2}. 
To this end, let  $1\leq i< j\leq 2p-1$.
Using \eqref{def:g_i}, we find
$$
\Delta_{e_{ij}}(g_i) = \sum_{l=1}^{2p-1} \Delta_{e_{ij}}(x_{il}c_{il})=c_{ij} ,
$$
as we want.
Similarly, using $x_{ij}=-x_{ji}$, we get
$$
\Delta_{e_{ij}}(g_j) =  \sum_{l=1}^{2p-1} \Delta_{e_{ij}}(x_{jl}c_{jl})=- \sum_{l=1}^{2p-1} \Delta_{e_{ij}}(x_{lj}c_{jl})=-c_{ji} .
$$
This proves \eqref{item:lem:c_ij:2}. 

Next, let $1\leq i\leq 2p-1$.
We use Lemma \ref{lem:x_0i} to write $e_{0i}=\sum_{j=1}^{2p} e_{ij}$.
Using this and the Leibniz rule from Lemma \ref{lem:Leibniz-rule}, we find
$$
\Delta_{e_{0i}}(g_i) = \sum_{j=1}^{2p-1} \Delta_{\sum_{l=1}^{2p} e_{il}} (c_{ij}x_{ij})=\sum_{j=1}^{2p-1} c_{ij}\circ t_{e_{i,2p}}+x_{ij}\Delta_{e_{i,2p}}c_{ij} .
$$ 
This concludes the proof of \eqref{item:lem:c_ij:1}.
The proof of \eqref{item:lem:c_ij:4} is similar. 
This concludes the proof of the lemma.
\end{proof}

\begin{lemma} \label{lem:c_ij-2}
Assume that there are polynomials $c_{ij}\in \F_p[z_1,\dots ,z_{2p-1}]$ for $1\leq i,j\leq 2p-1$ with the following properties:
\begin{enumerate}
\item $c_{ij}=-c_{ji}$;\label{item:lem:c_ij-2:1}
\item $c_{ij}$ is independent of $z_i$  
(and $z_j$);\label{item:lem:c_ij-2:2}
\item $\sum_{j=1}^{2p-1} c_{ij}= \sigma_{p-1}(z_1,\dots,\widehat{z_i},\dots,z_{2p-1}) $.\label{item:lem:c_ij-2:3}
\end{enumerate}
Then the test functions $g_i\colon W\to \F_p$ given by \eqref{def:g_2p}, \eqref{def:g_0},   and  \eqref{def:g_i} satisfy the conclusion of Theorem \ref{thm:existence-of-test-functions-K_2p+1}.
\end{lemma}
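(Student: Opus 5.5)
By Lemma \ref{lem:Delta-g_i-necessary}, it suffices to verify the two conditions \eqref{item:lem:Delta-g_i-necessary:1} and \eqref{item:lem:Delta-g_i-necessary:2} for the functions $g_i$ built from the $c_{ij}$. Every required derivative has already been computed in Lemma \ref{lem:c_ij}, so the task reduces to checking how the three hypotheses on the $c_{ij}$ simplify those formulas. One preliminary remark is used throughout. Since $e_{i,2p}$ is the basis vector dual to $z_i=x_{i,2p}$, translation by $e_{i,2p}$ shifts $z_i$ by $1$ and leaves all other coordinates fixed. Consequently, for a polynomial $c\in\F_p[z_1,\dots,z_{2p-1}]$ that does not involve $z_i$, we have $c\circ t_{e_{i,2p}}=c$ and $\Delta_{e_{i,2p}}c=0$. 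Likewise, $\Delta_{e_{ij}}c=0$ for $1\le i<j\le 2p-1$, because such $c$ does not depend on the coordinates $x_{ij}$ at all.

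We first treat condition \eqref{item:lem:Delta-g_i-necessary:1}, for $1\le i\le 2p-1$. By hypothesis (2), each $c_{ij}$ is independent of $z_i$. Hence in \eqref{item:lem:c_ij:1} the terms $x_{ij}\Delta_{e_{i,2p}}c_{ij}$ vanish, and $c_{ij}\circ t_{e_{i,2p}}=c_{ij}$. This gives $\Delta_{e_{0i}}(g_i)=\sum_j c_{ij}$. By hypothesis (3), this sum equals $\sigma_{p-1}(z_1,\dots,\widehat{z_i},\dots,z_{2p-1})$, which is exactly \eqref{item:lem:Delta-g_i-necessary:1}.

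Condition \eqref{item:lem:Delta-g_i-necessary:2} splits into two cases.
\begin{itemize}
\item For $1\le i<j\le 2p-1$, formula \eqref{item:lem:c_ij:2} gives $\Delta_{e_{ij}}(g_i)=c_{ij}$ and $\Delta_{e_{ij}}(g_j)=-c_{ji}$. These agree by the antisymmetry in hypothesis (1).
\item For $j=2p$, we have $g_{2p}=0$, so we must show $\Delta_{e_{i,2p}}(g_i)=0$. By \eqref{item:lem:c_ij:4}, this derivative equals $\sum_l x_{il}\Delta_{e_{i,2p}}c_{il}$. Each term vanishes, again because $c_{il}$ is independent of $z_i$ by hypothesis (2).
\end{itemize}

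The only point requiring care is the correctness of the formulas in Lemma \ref{lem:c_ij}, which are taken as given. In particular, \eqref{item:lem:c_ij:2} uses that the $c_{il}$ do not involve any $x$-coordinates, which holds since they lie in $\F_p[z_1,\dots,z_{2p-1}]$. It also uses that $\Delta_{e_{ij}}$ acts trivially on the $z$'s when $i,j<2p$. With these in place, the verification is a direct substitution, and no real obstacle remains.
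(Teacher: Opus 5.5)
Your proof is correct and follows the paper's argument essentially verbatim. Like the paper, you verify the two conditions of Lemma \ref{lem:Delta-g_i-necessary} by substituting hypotheses (1)--(3) into the formulas of Lemma \ref{lem:c_ij}: independence of $z_i$ kills the $\Delta_{e_{i,2p}}c_{ij}$ terms and the shift $t_{e_{i,2p}}$, antisymmetry handles $j<2p$, and $g_{2p}=0$ together with \eqref{item:lem:c_ij:4} handles $j=2p$.
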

\begin{proof}
This is a direct consequence of Lemmas \ref{lem:Delta-g_i-necessary} and \ref{lem:c_ij}. 
Indeed, condition \eqref{item:lem:c_ij-2:2} ensures $\Delta_{e_{i,2p}}c_{ij} =0$ and $ c_{ij}\circ t_{e_{i,2p}}= c_{ij} $.
Hence, equation \eqref{item:lem:c_ij:1} in Lemma \ref{lem:c_ij} yields
$$
\Delta_{e_{0i}}(g_i) =\sum_{j=1}^{2p-1} c_{ij}
$$
which agrees with $\sigma_{p-1}(z_1,\dots,\widehat{z_i},\dots,z_{2p-1})$ by condition \eqref{item:lem:c_ij-2:3}.
Thus, equation \eqref{item:lem:Delta-g_i-necessary:1} in 
Lemma \ref{lem:Delta-g_i-necessary} holds.

Similarly, condition \eqref{item:lem:c_ij-2:1} implies by  \eqref{item:lem:c_ij:2} in Lemma \ref{lem:c_ij} that $\Delta_{e_{ij}}(g_i)=\Delta_{e_{ij}}(g_j)$ and so equation \eqref{item:lem:Delta-g_i-necessary:2} in Lemma \ref{lem:Delta-g_i-necessary} holds true for $j<2p$.
The same holds for $j=2p$, because
$$
\Delta_{e_{i,2p}}(g_i) = \sum_{l=1}^{2p-1}x_{il}\Delta_{e_{i,2p}}(c_{il}) =0
$$
for $1\leq i\leq 2p-1$, since $\Delta_{e_{i,2p}}(c_{il})=0$ as $c_{il}$ is independent of $z_i$. 
By Lemma \ref{lem:Delta-g_i-necessary}, 
the lemma follows.
\end{proof}

The final step in the proof of Theorem \ref{thm:existence-of-test-functions-K_2p+1} is the construction of polynomials  $c_{ij}\in \F_p[z_1,\dots ,z_{2p-1}]$ which satisfy conditions \eqref{item:lem:c_ij-2:1}--\eqref{item:lem:c_ij-2:3} in Lemma \ref{lem:c_ij-2}.

\begin{lemma}\label{lem:existence-cij}
There are polynomials $c_{ij}\in \F_p[z_1,\dots ,z_{2p-1}]$ for $1\leq i,j\leq 2p-1$ that satisfy conditions \eqref{item:lem:c_ij-2:1}--\eqref{item:lem:c_ij-2:3} in Lemma \ref{lem:c_ij-2}.
\end{lemma}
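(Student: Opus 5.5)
The plan is to build the $c_{ij}$ one monomial at a time, so that conditions (1)--(3) of Lemma \ref{lem:c_ij-2} reduce to a flow problem on a complete graph with $p$ vertices. Write $n=2p-1$ and, for a subset $T\subset\{1,\dots,n\}$, put $z^T=\prod_{t\in T}z_t$. Then
$$
\sigma_{p-1}(z_1,\dots,\widehat{z_i},\dots,z_n)=\sum_{|T|=p-1,\ i\notin T} z^T .
$$
I will look for $c_{ij}$ of the form
$$
c_{ij}=\sum_{|T|=p-1} a^T_{ij}\, z^T, \qquad a^T_{ij}\in\F_p .
$$

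Fix $T$ with $|T|=p-1$. It suffices to find scalars $a^T_{ij}$ with three properties:
\begin{enumerate}
\item[(a)] $a^T_{ij}=-a^T_{ji}$;
\item[(b)] $a^T_{ij}=0$ unless both $i,j\notin T$;
\item[(c)] $\sum_j a^T_{ij}=1$ if $i\notin T$, and $\sum_j a^T_{ij}=0$ if $i\in T$.
\end{enumerate}
Given such scalars, the three conditions of Lemma \ref{lem:c_ij-2} follow directly:
\begin{itemize}
\item[(1)] antisymmetry of $c_{ij}$ follows from (a);
\item[(2)] $c_{ij}$ is independent of $z_i$ and $z_j$, because by (b) only monomials $z^T$ with $i,j\notin T$ appear;
\item[(3)] $\sum_j c_{ij}=\sum_{T\not\ni i}z^T=\sigma_{p-1}(z_1,\dots,\widehat{z_i},\dots,z_n)$, by (c).
\end{itemize}
The diagonal condition $c_{ii}=0$ is automatic from antisymmetry.

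By (b), the problem for fixed $T$ lives on the complement $T^c$, which has exactly $n-(p-1)=p$ elements. We need an antisymmetric $\F_p$-valued function on ordered pairs of $T^c$ in which every vertex has row sum $1$. This is a flow on $K_p$ with prescribed divergence $1$ at each vertex. The necessary condition is that the total divergence vanishes, which holds because it equals $p=0$ in $\F_p$. This is the key point, and it is where $n=2p-1$ enters.

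Concretely, let $m=\max T^c$. Set $a^T_{km}=1$ and $a^T_{mk}=-1$ for every $k\in T^c\setminus\{m\}$, and set all other entries to $0$. Every $k\neq m$ in $T^c$ then has row sum $1$. The vertex $m$ has row sum $-(p-1)=1$ in $\F_p$. Vertices in $T$ have row sum $0$, since all their entries vanish.

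Summing over $T$ gives an explicit formula. For $i<j$, $c_{ij}$ is the sum of $z^T$ over all $(p-1)$-subsets $T$ that avoid $i$ and $j$ and contain every index larger than $j$. Antisymmetry then defines $c_{ji}$.

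I do not expect a serious obstacle. The only real insight is spotting the divergence count $|T^c|=p\equiv 0$. After that, the remaining verification is bookkeeping of (a)--(c).
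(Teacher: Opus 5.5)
Your proposal is correct and is essentially the paper's proof: the monomial-by-monomial Ansatz $c_{ij}=\sum_T a^T_{ij}z^T$ reduces everything to a skew-symmetric $p\times p$ matrix on the $p$-element set $T^c$ with all row sums $1$. Both you and the paper realize this matrix as a star centered at one vertex, using $-(p-1)=1$ in $\F_p$; the paper puts the center at the first index, you put it at $\max T^c$. The only slip is your remark that $c_{ii}=0$ is automatic from antisymmetry, which fails for $p=2$, but your construction sets $a^T_{ii}=0$ explicitly, so nothing is affected.
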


\begin{proof}
Let $I\coloneq  \{1,\dots ,2p-1\}$.
For a subset $J\subset I$, we write
$$
z_J\coloneq \prod_{j\in J}z_j
$$
and make the ``Ansatz''
$$
c_{ij}\coloneq \sum_{\substack{J\subset I\setminus\{i,j\}\\ |J|=p-1}} a_{i,j,J}\cdot z_J
$$
with $a_{i,j,J}\in \F_p$ and $a_{i,i,J}=0$.
By construction, this function is independent of $z_i$ and $z_j$.
Moreover, the antisymmetry $c_{ij}=-c_{ji}$ is equivalent to 
\begin{align}\label{item:a_ij:1}
a_{i,j,J}=-a_{j,i,J} .
\end{align}
Finally, condition \eqref{item:lem:c_ij-2:3} in Lemma \ref{lem:c_ij-2} translates for $1\leq i\leq 2p-1$ into the condition
\begin{align*} 
\sum_{j=1}^{2p-1}  \sum_{\substack{J\subset I\setminus\{i,j\}\\ |J|=p-1}} a_{i,j,J} z_J= \sum_{\substack{J\subset I\setminus \{i\}\\ |J|=p-1}} z_J .
\end{align*}
Comparing coefficients of $z_J$, we see that the above is equivalent to the following condition for any subset $J\subset I\setminus\{i\}$ of cardinality $p-1$:
\begin{align}\label{item:a_ij:2}
\sum_{j\in I\setminus (J\cup \{i\})}  a_{i,j,J} =1 .
\end{align}
For any subset  $J\subset I\setminus\{i\}$ of cardinality $p-1$, a matrix of coefficients $a_{i,j,J}$ with $i,j\in I\setminus J$ (which can be chosen uniformly for each $J$)
is constructed in Lemma \ref{lem:existence:a_i,j,J}.
This concludes the proof.
\end{proof}

\begin{lemma} \label{lem:existence:a_i,j,J}
There is a skew-symmetric $p\times p$ matrix $A=(a_{ij})\in \F_p^{p\times p}$ such that for all $1\leq i\leq p$, we have
$$
\sum_{j=1}^p a_{ij}=1\in \F_p.
$$
\end{lemma}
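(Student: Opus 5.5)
We need a skew-symmetric $p\times p$ matrix over $\F_p$ with all row sums equal to $1$. The plan is to use a circulant matrix, which is automatically skew-symmetric if its generating vector is odd, and whose row sums are all equal by construction.

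Index rows and columns by $\Z/p$ and set $a_{ij}=\phi(j-i)$ for a function $\phi\colon\Z/p\to\F_p$. Skew-symmetry $a_{ji}=-a_{ij}$ is equivalent to $\phi(-k)=-\phi(k)$, which forces $\phi(0)=0$ when $p$ is odd. Every row sum equals $\sum_{k\in\Z/p}\phi(k)$. The naive choice $\phi(k)=k$ fails, since then $\sum_k \phi(k)=0$. Indeed, any odd $\phi$ has $\sum_k\phi(k)=0$ because the values at $k$ and $-k$ cancel. So a circulant ansatz alone cannot work, and the row-dependent structure must be broken.

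The next idea is $a_{ij}=\lambda$ for $i<j$ and $a_{ij}=-\lambda$ for $i>j$, with zero diagonal. Then row $i$ has sum $\lambda\bigl((p-i)-(i-1)\bigr)=\lambda(p+1-2i)\equiv\lambda(1-2i)$, which depends on $i$. To fix this, take $a_{ij}=u_j-u_i$ for $i\neq j$ (and $a_{ii}=0$). This is skew-symmetric, and row $i$ sums to $\sum_j u_j - p\,u_i=\sum_j u_j$ in $\F_p$, the same for every row. However, $a_{ij}=u_j-u_i$ for $i\ne j$ also gives $0$ on the diagonal automatically, so we may take $a_{ij}=u_j-u_i$ for all $i,j$. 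We only need $\sum_j u_j=1$, for example $u_1=1$ and $u_j=0$ for $j\ge2$. This gives $a_{i1}=1$ and $a_{1j}=-1$ for $i,j\ne1$, and all other entries $0$.

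To verify: row $1$ sums to $-(p-1)=1-p\equiv1$, and row $i\ge2$ has only the entry $a_{i1}=1$, so it sums to $1$. Skew-symmetry is clear, and the argument works for $p=2$ as well, where the matrix is $\begin{pmatrix}0&-1\\1&0\end{pmatrix}$ and $-1=1$. No step is a real obstacle. The only subtlety is that the row sum of the first row uses $p\equiv0$, which is exactly why the construction works over $\F_p$ and fails over $\Q$: over $\Q$ a skew matrix has total entry sum $0$, whereas here the total is $p\equiv0$ even though each of the $p$ rows sums to $1$.
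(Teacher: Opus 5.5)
Your proof is correct: your choice $u_1=1$, $u_j=0$ for $j\geq 2$ produces exactly the matrix used in the paper (first row $0,-1,\dots,-1$, first column $0,1,\dots,1$, all other entries zero), and you verify it the same way, using $-(p-1)=1$ in $\F_p$. Your family $a_{ij}=u_j-u_i$ with $\sum_j u_j=1$ is a nice explanation of why this works, but the construction itself is the same as the paper's.
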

\begin{proof}
It is enough to take the following matrix:
$$
A=\begin{pmatrix}0&-1&-1&\cdots&-1\\1&0&0&\cdots&0\\1&0&0&\cdots&0\\\vdots&\vdots&\vdots&\ddots&\vdots\\1&0&0&\cdots&0\end{pmatrix} \in \F_p^{p\times p}.
$$
This matrix satisfies the required conditions because $-(p-1)=1\in \F_p$.
\end{proof}

\begin{example}[Explicit test functions for $p=2$]
In the special case $p=2$, the above argument yields the following test functions, where as before,
$z_1=x_{14}$, $z_2=x_{24}$, and $z_3=x_{34}$:
\[
\begin{aligned}
g_0&=z_1z_2+z_1z_3+z_2z_3,\\
g_1&=x_{12}z_3+x_{13}z_2,\\
g_2&=x_{12}z_3+x_{23}z_1,\\
g_3&=x_{13}z_2+x_{23}z_1,\\
g_4&=0.
\end{aligned}
\] 
\end{example}

\begin{example}[Explicit test functions for $p=3$]
Let now $p=3$.
Then the above argument yields the following test functions:
\[
\begin{aligned}
g_0
&=\sigma_3(z_1,z_2,z_3,z_4,z_5)\\
&=
 z_1z_2z_3+z_1z_2z_4+z_1z_2z_5
+z_1z_3z_4+z_1z_3z_5+z_1z_4z_5
+z_2z_3z_4+z_2z_3z_5+z_2z_4z_5
+z_3z_4z_5.
\end{aligned}
\]
%
%
%
%
Define \(c_{ji}=-c_{ij}\), \(c_{ii}=0\), and for \(1\le i<j\le 5\) set
\[
\begin{array}{lll}
c_{12}=-(z_3z_4+z_3z_5+z_4z_5),
&
c_{13}=-(z_2z_4+z_2z_5+z_4z_5),
&
c_{14}=-(z_2z_3+z_2z_5+z_3z_5),
\\[3pt]
c_{15}=-(z_2z_3+z_2z_4+z_3z_4),
&
c_{23}=-(z_1z_4+z_1z_5),
&
c_{24}=-(z_1z_3+z_1z_5),
\\[3pt]
c_{25}=-(z_1z_3+z_1z_4),
&
c_{34}=-z_1z_2,
&
c_{35}=-z_1z_2,
\\[3pt]
c_{45}=0.
\end{array}
\]
Then
\[
g_i=\sum_{j=1}^5 x_{ij}c_{ij}
\qquad (1\le i\le 5).
\]
Explicitly, this gives
\[
\begin{aligned}
g_1={}&
-x_{12}(z_3z_4+z_3z_5+z_4z_5)
-x_{13}(z_2z_4+z_2z_5+z_4z_5)\\
&-x_{14}(z_2z_3+z_2z_5+z_3z_5)
-x_{15}(z_2z_3+z_2z_4+z_3z_4),
\\[4pt]
g_2={}&
-x_{12}(z_3z_4+z_3z_5+z_4z_5)
-x_{23}(z_1z_4+z_1z_5)\\
&-x_{24}(z_1z_3+z_1z_5)
-x_{25}(z_1z_3+z_1z_4),
\\[4pt]
g_3={}&
-x_{13}(z_2z_4+z_2z_5+z_4z_5)
-x_{23}(z_1z_4+z_1z_5)\\
&-x_{34}z_1z_2
-x_{35}z_1z_2,
\\[4pt]
g_4={}&
-x_{14}(z_2z_3+z_2z_5+z_3z_5)
-x_{24}(z_1z_3+z_1z_5)
-x_{34}z_1z_2,
\\[4pt]
g_5={}&
-x_{15}(z_2z_3+z_2z_4+z_3z_4)
-x_{25}(z_1z_3+z_1z_4)
-x_{35}z_1z_2,
\\[4pt]
g_6={}&0.
\end{aligned}
\] 
\end{example}

\section{Proof of main results}

\begin{proof}[Proof of Theorem \ref{thm:K_2p+1}]
This follows easily from Corollary \ref{cor:thm:existence-of-test-functions-K_2p+1} and Proposition \ref{prop:test-functions-1}.
\end{proof}

\begin{proof}[Proof of Corollary \ref{cor:d(R)}]
By Theorem \ref{thm:K_2p+1}, the color profile of any $\F_p$-solution of $M(K_{2p+1})$ in its own mod $p$ Albanese graph has trivial color profile.
By \cite[Proposition 7.2]{EGFS}, the same holds for any regular matroid that contains the graphic matroid $M(K_{2p+1})$ as a minor,
hence applies to $M(K_n)$ for all $n\geq 2p+1$.
So $p\mid d(M(K_n))$ for all primes 
$p$ with $p\leq (n-1)/2$. 
This concludes the proof.
\end{proof}

\begin{proof}[Proof of Theorem \ref{thm:deg(C)}] 
Let $k<g$ be positive integers and let $p\leq (k+1)/2$ be a prime.
Let $Z\subset X$ be a codimension $k$ subvariety of a very general principally polarized abelian variety $(X,\Theta_X)$ of dimension $g$.
Since $(X,\Theta_X)$ is very general, its Mumford--Tate group is maximal and 
the only Hodge classes in $H^{2k}(X,\Z)$ are multiples of the minimal class $\theta_k$.
Hence, $[Z]=m\cdot \theta_k$ and we aim to show that $p\mid m$.
Note that showing this for all subvarieties $Z\subset X$ of codimension $k$ is equivalent to showing it for all cycles in the Chow group $\CH^k(X)$ of codimension $k$-cycles, and we will use this in our inductive argument below. 
\smallskip

\textbf{Step 1.} We first deal with the special case where $g=2p$ and $k=g-1$, i.e.~$Z$ is a curve and $(X,\Theta_X)$ has dimension $2p$.

 We consider the matroidal family of principally polarized abelian varieties of dimension $2p$ that is associated to the graphic matroid $M(K_{2p+1})$ of the complete graph $K_{2p+1}$.
 A spread out argument shows that the very general fiber of this family carries a curve of class $m\cdot \theta_{2p-1}$.
Theorem \ref{thm:K_2p+1} together with the main result in \cite{EGFS} (cf.~Theorem \ref{thm:EGFS}) then implies $p\mid m$, as we want.  \smallskip

\textbf{Step 2.} We deal with the case where $k=g-1$ and $g$ is arbitrary (with $p\leq g/2$).

We argue by induction on $g-2p$.
If $g-2p=0$, then the result follows from Step 1.
If $g-2p\geq 1$, then we specialize $(X,\Theta_X)$ to the product $(Y,\Theta_Y)\times (E,\Theta_E)$ where $(Y,\Theta_Y)\in \mathcal A_{g-1}$ is very general.
Let $f\colon X\to Y$ be the projection.
Then
$$
[f_\ast Z]=m\cdot f_\ast [\Theta_X]^{g-1}/(g-1)!=m[\Theta_Y]^{g-2}/(g-2)! .
$$
Applying the induction hypothesis to the cycle $f_\ast Z$ on $Y$, we get that $p\mid m$, as we want.   \smallskip

\textbf{Step 3.} We finally deal with the general case where $k<g$ and $p\leq (k+1)/2$ is a prime.

We argue by induction on $g-k$.
If $g-k=1$, then $k=g-1$ and we conclude via Step 2.
If $g-k>1$, then we specialize $(X,\Theta)$ to a product $(Y,\Theta_Y)\times (E,\Theta_E)$ with $(Y,\Theta_Y)\in \mathcal A_{g-1}$ very general. 
Let $\iota \colon Y\to X$ be the inclusion of $Y$ as an abelian subvariety.
Then
$$
[\iota^\ast Z]=m\cdot \iota^\ast [\Theta_X]^k/k!=m\cdot [\Theta_Y]^k/k! .
$$
Applying the induction hypothesis to the cycle $\iota^\ast Z$ on $Y$, we then find that $p\mid m$, as we want. 
This concludes the proof of the theorem.
\end{proof}

\begin{proof}[Proof of Corollary \ref{cor:thm:deg(C)}]
The conclusion $\prod_{p\,\leq\, (k+1)/2}p =
  e^{\vartheta\left(\frac{k+1}{2}\right)} \mid \mkg $ follows directly from Theorem \ref{thm:deg(C)}, giving the corollary.
\end{proof}

The following estimate follows from work of Dusart \cite{Dusart}.

\begin{corollary}\label{cor:Dusart}  We have
$$
\mkg >\exp\!\left( \frac {k+1}{2} \left(1-\frac{151.3}{\left(\log\left(\frac{k+1}{2}\right)\right)^4}\right) \right) \qquad \text{for all $3\leq k<g$} 
$$
and
$$
\mkg > e^{(k+1)/3} \qquad \text{for all \ $33\leq k <g $}.
$$
\end{corollary}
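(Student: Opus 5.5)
The plan is to combine Corollary \ref{cor:thm:deg(C)} with Dusart's explicit lower bounds for Chebyshev's function $\vartheta$. By Corollary \ref{cor:thm:deg(C)}, $\mkg$ is a positive multiple of $e^{\vartheta(x)}$ with $x=\frac{k+1}{2}$. Hence $\mkg\geq e^{\vartheta(x)}$, and both claims reduce to explicit lower bounds on $\vartheta(x)$:
\[
\vartheta(x)>x\Bigl(1-\tfrac{151.3}{(\log x)^4}\Bigr)
\quad\text{and}\quad
\vartheta(x)>\tfrac{2}{3}x .
\]
The second bound gives $e^{\vartheta(x)}>e^{(k+1)/3}$. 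Strict inequality for $\mkg$ then follows because either the bound on $\vartheta$ is strict, or $\mkg$ is a multiple of $e^{\vartheta(x)}$.

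\textbf{First estimate.} I would invoke Dusart's theorem \cite{Dusart} in the form
\[
|\vartheta(x)-x|<\frac{151.3\,x}{(\log x)^4}\qquad\text{for all } x\geq 2,
\]
which is the form stated there, valid for $x\geq 2$. This covers all $k\geq 3$.

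The case $k=3$, i.e.\ $x=2$, needs a separate look, since the hypothesis is $3\leq k$. There $\log 2<1$, so the right-hand side
\[
x\Bigl(1-\tfrac{151.3}{(\log x)^4}\Bigr)
\]
is hugely negative. The claimed bound is then trivially true because $\mkg\geq 1>e^{\text{negative}}$. In fact the bound is vacuous (the exponent is negative) for all $x$ up to roughly $e^{3.5}$, and for those $k$ I would simply use $\mkg\geq 1$. For larger $x$, Dusart's inequality applies directly.

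\textbf{Second estimate.} I need $\vartheta(x)>\frac{2}{3}x$ for $x\geq 17$ (i.e.\ $k\geq 33$). Dusart \cite{Dusart} gives bounds of the form
\[
\vartheta(x)>x\Bigl(1-\tfrac{c}{\log x}\Bigr)
\]
for $x$ beyond some threshold. For example, $\vartheta(x)\geq x-\frac{x}{2\log x}$ holds for $x\geq 3594641$, and cruder Rosser--Schoenfeld-type bounds hold from small $x$ on. Such a bound exceeds $\frac{2}{3}x$ once $\log x$ is moderately large. The finite range in between, from $x=17$ up to the threshold, I would handle by direct numerical verification of $\vartheta(x)>\frac{2}{3}x$, or by citing the classical statement that $\vartheta(x)>x(1-1/\log x)$ for $x\geq 41$. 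In that case I would check $17\leq x\leq 41$ by hand. At $x=17$ we have
\[
\vartheta(17)=\log(510510)\approx 13.14>11.33,
\]
and since $\vartheta$ is a step function, it suffices to check just before each prime jump.

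\textbf{Main obstacle.} The only delicate point is locating the precise form of Dusart's explicit bounds and the thresholds from which they hold, together with the finite check on the small range. The threshold $k\geq 33$ presumably comes from exactly such a comparison, and it is the step I expect to need the most care.
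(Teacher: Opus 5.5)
Your first estimate is the paper's argument: $\mkg\geq e^{\vartheta(x)}$ with $x=\frac{k+1}{2}$, combined with Dusart's bound $|\vartheta(x)-x|<151.3\,x/(\log x)^4$ for $x\geq 2$. Your remark that the bound is vacuous for $x\lesssim 33$ is harmless but not needed.

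For the second estimate you take a different but legitimate route. The paper uses Dusart's bound $|\vartheta(x)-x|<3.965\,x/(\log x)^2$ for $x\geq 2$. This gives $\vartheta(x)>\frac23 x$ once $x\geq 31.5$, i.e.\ $k\geq 62$, and the paper tabulates $33\leq k\leq 61$. You use the Rosser--Schoenfeld bound $\vartheta(x)>x(1-1/\log x)$ for $x\geq 41$, which beats $\frac23 x$ because $\log 41>3$. The cost is a slightly longer finite range, $17\leq x<41$ (i.e.\ $33\leq k\leq 80$), but that range still involves only a handful of prime jumps.

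There is, however, a concrete error in how you set up the finite check. The inequality you say you need, $\vartheta(x)>\frac23 x$ for all real $x\geq 17$, is false. On $[23,29)$ one has $\vartheta(x)=\log(223092870)\approx 19.22$, while $\frac23 x\to\frac{58}{3}\approx 19.33$ as $x\to 29^-$. So the inequality fails on roughly $(28.84,29)$. Checking ``just before each prime jump'' over the reals therefore breaks at the jump at $29$.

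The fix is to use that $x=\frac{k+1}{2}\in\frac12\Z$. The last relevant point before $29$ is $x=28.5$ ($k=56$), where $\frac23 x=19<19.22$, i.e.\ $223092870>e^{19}$. This is exactly the row $45\leq k\leq 56$ of the paper's table.

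At the other jumps in $[17,41]$ there is ample room even over the reals:
\begin{itemize}
\item $\vartheta(17)\approx 13.14>\frac{38}{3}$;
\item $\vartheta(19)\approx 16.09>\frac{46}{3}$;
\item $\vartheta(29)\approx 22.59>\frac{62}{3}$;
\item $\vartheta(31)\approx 26.02>\frac{74}{3}$;
\item $\vartheta(37)\approx 29.64>\frac{82}{3}$.
\end{itemize}
With the restriction to half-integers, your argument goes through.
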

\begin{proof} 
By Corollary \ref{cor:thm:deg(C)},
$$
\prod_{p\leq (k+1)/2}p \mid \mkg . 
$$
Hence
\begin{align}\label{log-bound}
\log  \mkg  
\geq \sum_{p\leq (k+1)/2}\log p=\vartheta\!\left(\frac{k+1}{2}\right).
\end{align}
By Dusart's explicit estimate (see \cite[Theorem~4.2]{Dusart}, using the row with exponent $4$),
$$
\left|\vartheta(x)-x\right|<\frac{151.3x}{(\log x)^4} \qquad \text{for $x\geq 2$},
$$  
Substituting $x=(k+1)/2$, plugging into (\ref{log-bound}), and exponentiating proves the first inequality.

To prove the second estimate, apply \cite[Theorem~4.2]{Dusart} (using the row with exponent $2$), to get 
$$
\left|\vartheta(x)-x\right| < \frac{3.965 x}{(\log x)^2} \qquad \text{for all }\ x\geq 2.
$$
Thus
$$
\vartheta(x)>
x\left(1-\frac{3.965}{(\log x)^2}\right).
$$
For $k\geq 62$, we have $x\geq 31.5$, and hence
$$
\frac{3.965}{(\log x)^2} \leq \frac{3.965}{(\log 31.5)^2} \approx 0.333126 < \frac{1}{3}.
$$
Consequently,
$$
\log \left( \mkg  \right) \geq \vartheta(x) > \frac{2x}{3} = \frac{k+1}{3}.
$$
Exponentiating proves $ \mkg  >e^{(k+1)/3}$ for $k\geq 62$.
To bring this down to $k\geq 33$, we use an explicit computation in the range $33\leq k\leq 61$ that is illustrated in the following table:
\[
\begin{array}{c|c|c}
k & \displaystyle\prod_{p\leq (k+1)/2}p
  & \text{upper bound for } e^{(k+1)/3} \\ \hline
33\leq k\leq 36
  & 510\,510
  & e^{37/3}\approx 227\,143 \\[2pt]
37\leq k\leq 44
  & 9\,699\,690
  & e^{15}\approx 3\,269\,017 \\[2pt]
45\leq k\leq 56
  & 223\,092\,870
  & e^{19}\approx 178\,482\,301 \\[2pt]
57\leq k\leq 60
  & 6\,469\,693\,230
  & e^{61/3}\approx 677\,102\,575 \\[2pt]
k=61
  & 200\,560\,490\,130
  & e^{62/3}\approx 944\,972\,767 .
\end{array}
\]
This concludes the proof of the second inequality in the corollary. 
\end{proof}

\begin{remark}
We cannot extend the inequality $e^{(k+1)/3}\leq e^{\vartheta\left(\frac{k+1}{2}\right)}$ below $k=33$.
Indeed, for $k=32$, the relevant product of primes is only $2\cdot 3\cdot 5\cdot 7\cdot 11\cdot 13=30030$, whereas $e^{11}\approx 59874>30030$.
\end{remark}

\begin{proof}[Proof of Theorem \ref{thm:K_3,2p-1}]
This follows from Proposition \ref{prop:test-functions-1} applied to the test functions for the complete bipartite graph $K_{3,2p-1}$ that we construct in Theorem \ref{thm:existence-of-test-functions-K_3,2p-1} below (cf.~Corollary \ref{cor:thm:existence-of-test-functions-K_3,2p-1}).
\end{proof}

\appendix

\section{Construction of test functions for $K_{3,2p-1}$}
Let
$$
  G=K_{3,2p-1}, \qquad  V=V(G)=\{L_0,L_1,L_2\}\cup \{R_0,R_1,\ldots,R_{2p-2}\},
$$
be a bipartite graph with edge set 
$$
S\coloneq \{e_{ij}=[L_i\to R_j]\mid 0\leq i\leq 2,\ 0\leq j\leq 2p-2\},
$$
where we orient all edges from left to right. 
We choose the spanning tree
$$
T=\{e_{0i}\mid 0\leq i\leq 2p-2\}\cup \{e_{10},e_{20}\} .
$$
We identify (oriented) edges with their images in $H^1(G,\F_p)$. 
A basis of $W\coloneq H^1(G,\F_p)$ is given by
$$
e_{1i},e_{2i}\qquad 1\leq i\leq 2p-2 .
$$

\begin{lemma} \label{lem:e_ij-2}
The following relations hold in $W$:
\begin{align*}
e_{0i}=-e_{1i}-e_{2i}, \qquad 
e_{10}= -\sum_{i=1}^{2p-2}e_{1i}, \qquad 
e_{20}= -\sum_{i=1}^{2p-2}e_{2i} .
\end{align*}
\end{lemma}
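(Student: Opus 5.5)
The plan is to argue exactly as in the proof of Lemma~\ref{lem:x_0i}: every coboundary $\delta_v\in B^1(G,\F_p)$, $v\in V$, vanishes in $W=C^1(G,\F_p)/B^1(G,\F_p)$. We then read off each claimed relation from the coboundary at a suitable vertex. With our orientation (all edges from $L_i$ to $R_j$), the definition of $\delta_v$ gives
$$
\delta_{R_j}=\sum_{i=0}^{2}e_{ij},\qquad \delta_{L_i}=-\sum_{j=0}^{2p-2}e_{ij}.
$$

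The key steps, in order, are as follows.
\begin{enumerate}
\item For $1\leq i\leq 2p-2$, the vanishing of $\delta_{R_i}$ in $W$ reads $e_{0i}+e_{1i}+e_{2i}=0$. This is the first relation, $e_{0i}=-e_{1i}-e_{2i}$.
\item The vanishing of $\delta_{L_1}$ gives $\sum_{j=0}^{2p-2}e_{1j}=0$, hence $e_{10}=-\sum_{i=1}^{2p-2}e_{1i}$.
\item In the same way, $\delta_{L_2}$ gives $e_{20}=-\sum_{i=1}^{2p-2}e_{2i}$.
\end{enumerate}

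The remaining coboundaries, $\delta_{L_0}$ and $\delta_{R_0}$, are not needed. They are consistent with the above, since $\sum_v\delta_v=0$.

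I expect no real obstacle. The only care needed is the sign convention in $\delta_v$ (incoming edges count $+1$, outgoing $-1$), and that convention affects only an overall sign of each relation, which is irrelevant to its vanishing. It may also be worth remarking that these relations confirm that the $e_{1i},e_{2i}$ with $1\leq i\leq 2p-2$ span $W$: every tree edge is thereby expressed in terms of non-tree edges. This is consistent with $\dim W=3(2p-1)-(2p+2)+1=2(2p-2)$.
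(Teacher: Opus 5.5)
Your proposal is correct and is exactly the coboundary computation the paper has in mind; the paper's proof is just the remark that the relations follow from a simple coboundary computation, and your use of $\delta_{R_i}$, $\delta_{L_1}$, $\delta_{L_2}$ with the stated sign convention supplies precisely that. One small aside: your closing remark that every tree edge is thereby expressed in terms of non-tree edges overlooks the tree edge $e_{00}$, which requires $\delta_{R_0}$ as well (giving $e_{00}=-e_{10}-e_{20}=\sum_{i=1}^{2p-2}(e_{1i}+e_{2i})$, the relation behind the paper's formula for $\Delta_{e_{00}}$), but this does not affect the lemma itself.
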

\begin{proof}
This follows from a simple coboundary computation.
\end{proof}

Motivated by the above lemma, we denote the coordinates of $W$ which correspond to the basis $e_{1i},e_{2i}$, $1\leq i\leq 2p-2 $, by $x_i,y_i$ with $1\leq i\leq 2p-2$.
Here $x_i$ is dual to $e_{1i}$ while $y_i$ is dual to $e_{2i}$. 
By the above lemma, we then have the following.

\begin{equation}
\label{eq:Delta-actions}
\begin{aligned}
\Delta_{e_{10}}:\quad
&\begin{cases} x_i \mapsto -1,\\ y_i \mapsto 0, \end{cases}
&& 1\leq i\leq 2p-2, \\[1em]
\Delta_{e_{20}}:\quad
&\begin{cases} x_i \mapsto 0,\\ y_i \mapsto -1, \end{cases}
&& 1\leq i\leq 2p-2, \\[1em]
\Delta_{e_{00}}:\quad
&\begin{cases} x_i \mapsto 1,\\ y_i \mapsto 1, \end{cases}
&& 1\leq i\leq 2p-2, \\[1em]
\Delta_{e_{0i}}:\quad
&\begin{cases} x_j \mapsto -1, & j=i,\\ x_j \mapsto 0,   & j\neq i,\\ y_j \mapsto -1, & j=i,\\ y_j \mapsto 0,   & j\neq i, \end{cases}
&& 1\leq i,j\leq 2p-2.
\end{aligned}
\end{equation}
Note that \eqref{eq:Delta-actions} together with the Leibniz rule from Lemma \ref{lem:Leibniz-rule} completely determines the action of $\Delta_{e_{ij}}$ on polynomial functions on $W$.

\subsection{Existence of test functions for $K_{3,2p-1}$} 

We aim to prove the following.

\begin{theorem} \label{thm:existence-of-test-functions-K_3,2p-1}
Let $p$ be a prime number and let $G=K_{3,2p-1}$ be the complete bipartite graph with parts of cardinalities $3$ and $2p-1$.
Then there are polynomial test functions $g_{L_i},g_{R_j}\colon W=H^1(G,\F_p)\to \F_p$ which satisfy
$$
\Delta_{e_{ij}}(g_{R_j}-g_{L_i})=\begin{cases}
1\quad &\text{if $(i,j)=(0,0)$;}\\
0\quad &\text{if $0\leq i \leq 2$ and $0\leq j\leq 2p-2$ with $(i,j)\neq (0,0)$.}
\end{cases}
$$
\end{theorem}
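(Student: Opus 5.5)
The plan is to imitate the construction for $K_{2p+1}$ in Section~3. We normalize one test function to be zero and take one test function to be an elementary symmetric polynomial, which produces the constant $1$ on the special edge $e_{00}$ via Lemma~\ref{lem:symmetric-functions}. The remaining test functions are then chosen with an ansatz that is linear in some coordinates, with coefficients independent of the shifted variables, as in Lemmas~\ref{lem:c_ij} and~\ref{lem:c_ij-2}. Concretely, we set $g_{R_0}\coloneq 0$. The conditions at the vertex $R_0$ then read
$$
\Delta_{e_{00}}(g_{L_0})=-1,\qquad \Delta_{e_{10}}(g_{L_1})=0,\qquad \Delta_{e_{20}}(g_{L_2})=0 .
$$
For $1\leq j\leq 2p-2$ and $0\leq i\leq 2$, the remaining conditions are $\Delta_{e_{ij}}(g_{R_j})=\Delta_{e_{ij}}(g_{L_i})$. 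All of these are computed from \eqref{eq:Delta-actions} and the Leibniz rule (Lemma~\ref{lem:Leibniz-rule}).

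First I choose $g_{L_0}$. By \eqref{eq:Delta-actions}, $\Delta_{e_{00}}$ shifts all $4p-4$ coordinates $x_i,y_i$ by $+1$. Lemma~\ref{lem:symmetric-functions} applies to any number $n$ of variables, so $\sigma_d$ of any $n$ of these coordinates changes by $\sum_{j<d}\binom{n-j}{d-j}\sigma_j$. I will look for $n,d$ and a choice of variables (for instance $2p-1$ of them, such as $x_1,\dots,x_{2p-2},y_1$, with $d=p$) for which this difference is the constant $1$, so that $g_{L_0}=-\sigma_p(\dots)$ works. The constraint $\Delta_{e_{0j}}(g_{L_0})=\Delta_{e_{0j}}(g_{R_j})$ then dictates what $g_{R_j}$ must absorb. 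The analogue of \eqref{eq:Delta-sigma_p=hat-sigma-p-1} shows that shifting the single pair $x_j,y_j$ changes $\sigma_p$ by explicit terms of the form $\sigma_{p-1}$ of the other variables, plus a correction when both $x_j$ and $y_j$ occur. This choice of variables must be made with care, because an asymmetric choice such as $y_1$ alone breaks the uniformity in $j$.

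Next I make the ansatz
$$
g_{R_j}=x_j\,a_j+y_j\,b_j,\qquad g_{L_1}=\sum_j x_j\,\alpha_j,\qquad g_{L_2}=\sum_j y_j\,\beta_j,
$$
with polynomial coefficients independent of the variables that the relevant shifts move. The conditions then become purely algebraic identities among the coefficients, exactly as in Lemma~\ref{lem:c_ij-2}. The condition $\Delta_{e_{1j}}(g_{R_j})=\Delta_{e_{1j}}(g_{L_1})$ becomes $a_j=\alpha_j$, and similarly $b_j=\beta_j$. The condition at $L_0$ becomes $-(a_j+b_j)=\Delta_{e_{0j}}g_{L_0}$, which is a $\sigma_{p-1}$-type polynomial. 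Finally, $\Delta_{e_{10}}(g_{L_1})=0$ requires $\sum_j\alpha_j=0$ once the $\alpha_j$ are independent of $x$, and likewise $\sum_j\beta_j=0$. This is a linear system over $\F_p$ on monomial coefficients, and I would solve it by a combinatorial matrix choice in the spirit of Lemma~\ref{lem:existence:a_i,j,J}. The key arithmetic input should again be $p-1\equiv -1$, i.e.\ $-(p-1)=1$ in $\F_p$, which explains the appearance of $2p-1$ right-hand vertices.

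The main obstacle is the compatibility between the choice of $g_{L_0}$ and the zero-sum conditions $\sum_j\alpha_j=\sum_j\beta_j=0$. The constant $1$ on $e_{00}$ forces a $\sigma_p$ whose local derivatives must be split into $x$-parts and $y$-parts $a_j,b_j$ that each sum to zero over $j$. I would first test this in the case $p=2$, i.e.\ $K_{3,3}$, where everything is quadratic and the required splitting can be found by hand. I would then extract the general pattern, expecting the $\alpha_j,\beta_j$ to be sums of squarefree monomials of degree $p-1$ whose coefficients form a skew-type matrix as in Lemma~\ref{lem:existence:a_i,j,J}.
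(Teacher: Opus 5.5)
Your proposal never writes down the test functions. Worse, the ansatz you commit to cannot produce them for any prime $p$. The ``main obstacle'' you identify is an impossibility, not a matter of choosing the right variables in $\sigma_p$ or the right splitting.

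Your own reductions show the following: $a_j=\alpha_j$ depends only on the $y_k$ with $k\neq j$; $b_j=\beta_j$ depends only on the $x_k$ with $k\neq j$; and $\sum_j\alpha_j=\sum_j\beta_j=0$. Hence $\phi_j:=\Delta_{e_{0j}}(g_{L_0})=-(\alpha_j+\beta_j)$ is invariant under translation by $e_{0j}$, and $\sum_{j\geq 1}\phi_j=0$. Meanwhile, the coboundary of $L_0$ gives $e_{00}=-\sum_{j\geq 1}e_{0j}$ in $W$.

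Already for $p=2$, the case you plan to do by hand, this is fatal. The relation $\alpha_1(y_2)+\alpha_2(y_1)=0$ forces $\alpha_1=-\alpha_2$ to be constant, and likewise for $\beta$. So $\phi_1=\kappa$ and $\phi_2=-\kappa$ are constants, hence $g_{L_0}(w-e_{01}-e_{02})=g_{L_0}(w)$, i.e.\ $\Delta_{e_{00}}(g_{L_0})=0\neq -1$.

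For general $p$, use the coordinates $u_k=x_k-y_k$ and $y_k$. Translation by $e_{0j}$ is $y_j\mapsto y_j-1$, and translation by $e_{00}$ is $y\mapsto y+(1,\dots,1)$; both fix $u$.
\begin{itemize}
\item Invariance of $\phi_j$ makes $g_{L_0}=\sum_J c_J(u)\,y_J$ multi-affine in $y_1,\dots,y_{2p-2}$.
\item The condition $\sum_j\phi_j=0$ becomes $\sum_{j\notin K}c_{K\cup\{j\}}=0$ for every $K\subset\{1,\dots,2p-2\}$.
\item $\Delta_{e_{00}}(g_{L_0})$ evaluated at $y=0$ equals $\sum_{J\neq\emptyset}c_J$.
\end{itemize}
Summing the relations over all $K$ with $|K|=m-1$ gives $m\sum_{|J|=m}c_J=0$. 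This disposes of every size $m\neq p$, since $m\leq 2p-2$. For $m=p$, weight the relations with $|K|=p-1$ by $-H_{|K\cap A|}$, where $H_s=\sum_{i=1}^{s}i^{-1}\in\F_p$ and $A$ is a fixed $(p-1)$-subset. Every $p$-set $J$ then receives total weight $1$, so $\sum_{|J|=p}c_J=0$. Hence $\Delta_{e_{00}}(g_{L_0})$ vanishes at $y=0$ and can never equal the constant $-1$.

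The missing idea is a special function that is genuinely nonlinear along the $e_{0j}$-directions, whose $e_{00}$-derivative is made constant by a correction linear in the other set of variables. The paper normalizes $g_{L_0}=0$ and works with $\sigma_d=\sigma_d(y_1,\dots,y_{2p-2})$ in only $2p-2$ variables. There Lemma~\ref{lem:symmetric-functions-2} gives $\Delta_{e_{00}}\sigma_p=1-\sigma_{p-1}$ and $\Delta_{e_{00}}\sigma_{p-1}=0$. Then $g_{R_0}=\sigma_p-l\,\sigma_{p-1}$, with $l=\sum_i a_ix_i$ and $\sum_i a_i=-1$, satisfies $\Delta_{e_{00}}(g_{R_0})=1$.

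The remaining functions are $g_{L_2}=\sigma_p$, $g_{L_1}=-\sum_j x_jh_j$ and $g_{R_j}=(y_j-x_j)h_j$, where $h_j=\sigma_{p-1}(y_1,\dots,\widehat{y_j},\dots,y_{2p-2})$. The edges at $L_0$ work because $y_j-x_j$ and $h_j$ are $e_{0j}$-invariant. The edge $[L_1\to R_0]$ reduces to $\sum_j h_j=(p-1)\sigma_{p-1}=-\sigma_{p-1}$, which is where $p-1\equiv -1$ enters.

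In your normalization $g_{R_0}=0$, this solution becomes $g_{L_0}=-\sigma_p+l\,\sigma_{p-1}$. Its derivative along $e_{0j}$ contains the term $-a_j(x_j+y_j)\,\sigma_{p-2}(y_1,\dots,\widehat{y_j},\dots,y_{2p-2})$, which depends on $x_j$ whenever $a_j\neq 0$. So your ansatz rules it out.
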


Since the edge $e_{00}=[L_0\to R_0]$, which plays a special role in the above argument, can be replaced up to relabelling by any other edge, the above theorem implies:

\begin{corollary} \label{cor:thm:existence-of-test-functions-K_3,2p-1}
In the above notation, the image of the map
$$
\operatorname{Maps}(W,\F_p)^V\longrightarrow \operatorname{Maps}(W,\F_p)^S,\qquad ((g_{L_i})_{0\leq i\leq 2}, (g_{R_j})_{0\leq j\leq 2p-2}) \mapsto (\Delta_{e_{ij}}(g_{R_j}-g_{L_i}))_{0\leq i\leq 2,\ 0\leq j\leq 2p-2}
$$
contains $\F_p^S$, where $\F_p\subset  \operatorname{Maps}(W,\F_p)$ denotes the constant functions $W\to \F_p$.
\end{corollary}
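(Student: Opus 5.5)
The plan is to combine the $\F_p$-linearity of the map with the symmetry of $K_{3,2p-1}$. The map $(g_v)_{v\in V}\mapsto (\Delta_{e_{ij}}(g_{R_j}-g_{L_i}))_{i,j}$ is $\F_p$-linear, since $\Delta_e$ is linear. It therefore suffices to show that for every edge $s_0=e_{kl}\in S$, the indicator tuple $\mathbf 1_{s_0}$ lies in the image. This is the tuple whose entry at $s_0$ is the constant function $1$ and whose other entries are the constant function $0$. Every element of $\F_p^S$ is an $\F_p$-linear combination of these indicators. Theorem \ref{thm:existence-of-test-functions-K_3,2p-1} gives exactly this for $s_0=e_{00}$.

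To move $e_{00}$ to an arbitrary edge $e_{kl}$, I would use the graph automorphism $\sigma$ of $K_{3,2p-1}$ that transposes $L_0\leftrightarrow L_k$ and $R_0\leftrightarrow R_l$ and fixes all other vertices. The key point is that $\sigma$ preserves the bipartition, so it sends each oriented edge $[L_i\to R_j]$ to the oriented edge $[L_{\sigma(i)}\to R_{\sigma(j)}]$ with the same left-to-right orientation. Hence no signs appear. The permutation $\sigma$ then induces a linear automorphism of $C^1(G,\F_p)=\F_p^S$ with $e_s\mapsto e_{\sigma(s)}$. It sends $\delta_v$ to $\delta_{\sigma(v)}$, so it preserves $B^1(G,\F_p)$. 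It therefore descends to a linear automorphism $\varphi$ of $W=H^1(G,\F_p)$ satisfying $\varphi(e_s)=e_{\sigma(s)}$.

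Given the test functions $(g_v)$ from Theorem \ref{thm:existence-of-test-functions-K_3,2p-1}, I would define new functions by $g'_{v}\coloneq g_{\sigma^{-1}(v)}\circ\varphi^{-1}$. For an edge $s=[a\to b]$, linearity of $\varphi^{-1}$ gives
$$
\Delta_{e_{\sigma(s)}}(g'_{\sigma(b)}-g'_{\sigma(a)})(w)=\Delta_{e_s}(g_b-g_a)(\varphi^{-1}w),
$$
because $\varphi^{-1}(w+e_{\sigma(s)})=\varphi^{-1}(w)+e_s$. Constant functions stay constant under precomposition with $\varphi^{-1}$. So the tuple associated to $(g'_v)$ is the indicator of $\sigma(e_{00})=e_{kl}$. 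The $g'_v$ are again polynomial, since $\varphi$ is linear, although the corollary does not require this. Summing suitable multiples of these tuples yields all of $\F_p^S$.

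I expect no real obstacle here. The only point that needs care is checking that the relabelling respects the orientations, and hence the signs in $\delta_v$ and in $\Delta$. This holds because the chosen automorphism never swaps the two sides of the bipartition.
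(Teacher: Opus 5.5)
Your proposal is correct and is essentially the paper's argument: the paper derives the corollary from Theorem \ref{thm:existence-of-test-functions-K_3,2p-1} by observing that $e_{00}$ can be replaced by any other edge up to relabelling, and then uses linearity of the map. You make that relabelling explicit by transporting the test functions along the orientation-preserving automorphism $\sigma$ and the induced automorphism $\varphi$ of $W$, which is exactly the check the paper leaves implicit.
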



\subsection{Proof of Theorem \ref{thm:existence-of-test-functions-K_3,2p-1}}
The remainder of this section is devoted to the proof of Theorem \ref{thm:existence-of-test-functions-K_3,2p-1}.


The following consequence of Lemma \ref{lem:symmetric-functions} will be useful.

\begin{lemma} \label{lem:symmetric-functions-2}
The following identities hold in the polynomial ring $\F_p[y_1,\ldots,y_{2p-2}]$:
$$
\sigma_{p-1}(y_1+1,\ldots,y_{2p-2}+1)=\sigma_{p-1}(y_1,\ldots,y_{2p-2}),
$$
and
$$
\sigma_p(y_1+1,\ldots,y_{2p-2}+1)-\sigma_p(y_1,\ldots,y_{2p-2})=1-\sigma_{p-1}(y_1,\ldots,y_{2p-2}).
$$
\end{lemma}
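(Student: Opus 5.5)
We apply Lemma \ref{lem:symmetric-functions} with $n=2p-2$, once with $d=p-1$ and once with $d=p$. In each case the problem reduces to computing the binomial coefficients $\binom{2p-2-j}{d-j}$ modulo $p$ for $0\leq j\leq d-1$, and the plan is to use Lucas' theorem or simple divisibility arguments, in the same way as in the proof of Lemma \ref{lem:symmetric-functions}.

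\emph{First identity ($d=p-1$).} Lemma \ref{lem:symmetric-functions} gives
$$\sigma_{p-1}(y_1+1,\dots,y_{2p-2}+1)-\sigma_{p-1}(y_1,\dots,y_{2p-2})=\sum_{j=0}^{p-2}\binom{2p-2-j}{p-1-j}\sigma_j .$$
For $0\leq j\leq p-2$ the top entry $2p-2-j$ lies in $[p,2p-2]$. The lower entry is $p-1-j$ and the complementary entry is $(2p-2-j)-(p-1-j)=p-1$; both are less than $p$. So the numerator $(2p-2-j)!$ is divisible by $p$ exactly once, while the denominator $(p-1-j)!\,(p-1)!$ is prime to $p$. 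Hence every coefficient vanishes in $\F_p$, and the first identity follows.

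\emph{Second identity ($d=p$).} Here
$$\sigma_p(y+1)-\sigma_p(y)=\sum_{j=0}^{p-1}\binom{2p-2-j}{p-j}\sigma_j .$$
The complementary entry is now $(2p-2-j)-(p-j)=p-2$. For $1\leq j\leq p-2$ the top entry $2p-2-j$ is still at least $p$, while $p-j<p$ and $p-2<p$, so the same argument makes these coefficients vanish mod $p$.

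Two boundary terms remain. For $j=p-1$ the coefficient is $\binom{p-1}{1}=p-1=-1$, which produces the term $-\sigma_{p-1}$. For $j=0$ the coefficient is $\binom{2p-2}{p}$. By Lucas' theorem, writing $2p-2=1\cdot p+(p-2)$ and $p=1\cdot p+0$, this is $\binom11\binom{p-2}{0}=1$. Alternatively, it equals $\frac{(2p-2)\cdots(p+1)}{(p-2)!}\equiv\frac{(p-2)!}{(p-2)!}=1$. Together these give $1-\sigma_{p-1}$.

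\emph{Degenerate cases.} When $p=2$ we have $n=2p-2=p$, so $\sigma_p$ is still defined and the formula is unaffected; the middle range $1\leq j\leq p-2$ is then empty. The only point needing care is this bookkeeping of the boundary cases $j=0$ and $j=p-1$.
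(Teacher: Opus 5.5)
Your proposal is correct and follows essentially the same route as the paper. Both apply Lemma~\ref{lem:symmetric-functions} with $n=2p-2$ and $d=p-1,p$, show that the intermediate binomial coefficients vanish mod $p$, and evaluate the boundary terms $\binom{2p-2}{p}\equiv 1$ and $\binom{p-1}{1}\equiv -1$ by the same factorial computation. Your appeal to Lucas' theorem is just an alternative check of the $j=0$ term.
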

\begin{proof}
This follows directly from Lemma \ref{lem:symmetric-functions} and the fact that the binomial coefficient
$$
\binom{2p-2-j}{d-j}
$$
has the following divisibility properties:

If $d=p-1$, then it is divisible by $p$ for all $j=0,\dots, p-2$.

If $d=p$,  then it is divisible by $p$ for all $j=1,\dots, p-2$.
Moreover,
$$
\binom{2p-2-0}{p-0}\equiv \frac{(2p-2)(2p-3)\dots (p+1)}{(p-2)!}\equiv 1 \mod p
$$
and
$$
\binom{2p-2-(p-1)}{p-(p-1)}=\binom{p-1}{1} \equiv -1 \mod p.
$$
This concludes the proof.
\end{proof}

We will write from now on
$$
\sigma_d\coloneq \sigma_{d}(y_1,\dots ,y_{2p-2})\in \F_p[y_1,\dots ,y_{2p-2}] .
$$

Motivated  by Lemma \ref{lem:symmetric-functions-2} and the proof of Theorem \ref{thm:K_2p+1}, we make the following ``Ansatz'':
\begin{align}\label{def:gL0,gR0}
g_{L_0}\coloneq 0\qquad \text{and}\qquad g_{R_0}\coloneq \sigma_p-l\cdot \sigma_{p-1}
\end{align}
for some linear polynomial 
$$
l=\sum_{i=1}^{2p-2} a_ix_i\in \F_p[x_1,\dots ,x_{2p-2}].
$$
By Lemma \ref{lem:symmetric-functions-2}, $\Delta_{e_{00}}(\sigma_{p-1})=0$.
By the Leibniz rule (see Lemma \ref{lem:Leibniz-rule}), we thus find
$$
\Delta_{e_{00}}(l\cdot \sigma_{p-1})=\Delta_{e_{00}}(l)\cdot \sigma_{p-1}.
$$
By Lemma \ref{lem:symmetric-functions-2}, $\Delta_{e_{00}}(\sigma_p)=1-\sigma_{p-1}$ and so we get
\begin{align} \label{eq:Delta-gR0:1}
\Delta_{e_{00}}(g_{R_0}-g_{L_0})=1-\sigma_{p-1}-\Delta_{e_{00}}(l)\sigma_{p-1} ,
\end{align}
where $\Delta_{e_{00}}(l)= \sum_{i=1}^{2p-2} a_i$.
The constants $a_i\in \F_p$ in the Ansatz \eqref{def:gL0,gR0} should therefore satisfy the following linear condition:
\begin{align} \label{eq:condition-ai}
 \sum_{i=1}^{2p-2} a_i=-1.
\end{align}
We thus arrive at the following

\begin{lemma} \label{lem:Delta-gR0}
Let $g_{L_0},g_{R_0}$ be as in \eqref{def:gL0,gR0} and $a_i$ as in \eqref{eq:condition-ai}. 
Then the following holds:
\begin{enumerate}
\item $\Delta_{e_{00}}(g_{R_0}-g_{L_0})= 1 $;\label{item:lem:Delta-gR0:1}
\item $\Delta_{e_{10}}(g_{R_0})=-\sigma_{p-1}(y_1,\dots ,y_{2p-2})$;\label{item:lem:Delta-gR0:2}
\item $\Delta_{e_{20}}(g_{R_0})=\sigma_{p-1}(y_1-1,\dots ,y_{2p-2}-1)-1$;\label{item:lem:Delta-gR0:3}
\end{enumerate} 
\end{lemma}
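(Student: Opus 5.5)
Item (1) is essentially already established before the statement: by \eqref{eq:Delta-gR0:1}, $\Delta_{e_{00}}(g_{R_0}-g_{L_0})=1-\sigma_{p-1}-\big(\sum_i a_i\big)\sigma_{p-1}$, and substituting \eqref{eq:condition-ai} cancels the two $\sigma_{p-1}$ terms, leaving $1$. For completeness I would recall why \eqref{eq:Delta-gR0:1} holds. By \eqref{eq:Delta-actions}, $\Delta_{e_{00}}$ is translation by $+1$ in every coordinate $x_i$ and $y_i$. Lemma \ref{lem:symmetric-functions-2} then gives $\Delta_{e_{00}}\sigma_p=1-\sigma_{p-1}$ and $\Delta_{e_{00}}\sigma_{p-1}=0$. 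Finally, the first form of the Leibniz rule (Lemma \ref{lem:Leibniz-rule}) gives $\Delta_{e_{00}}(l\sigma_{p-1})=(\sum_i a_i)\sigma_{p-1}$.

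For (2) I would use \eqref{eq:Delta-actions} again: $\Delta_{e_{10}}$ is translation by $-1$ in every $x_i$ and leaves the $y_i$ fixed. Hence $\sigma_p$ and $\sigma_{p-1}$, which depend only on the $y$'s, are invariant under this translation, and $\Delta_{e_{10}}(l)=-\sum_i a_i=1$. The Leibniz rule therefore gives $\Delta_{e_{10}}(g_{R_0})=-\Delta_{e_{10}}(l)\,\sigma_{p-1}=-\sigma_{p-1}$.

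For (3), $\Delta_{e_{20}}$ is translation by $-1$ in every $y_i$ and fixes the $x_i$, so $l$ is invariant. This gives
\[
\Delta_{e_{20}}(g_{R_0})=\big(\sigma_p(y-1)-\sigma_p(y)\big)-l\cdot\big(\sigma_{p-1}(y-1)-\sigma_{p-1}(y)\big),
\]
where $y-1$ abbreviates $(y_1-1,\dots,y_{2p-2}-1)$. Applying Lemma \ref{lem:symmetric-functions-2} at the shifted point $y-1$ gives $\sigma_{p-1}(y)=\sigma_{p-1}(y-1)$, which kills the $l$-term. The same lemma also gives $\sigma_p(y)-\sigma_p(y-1)=1-\sigma_{p-1}(y-1)$. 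Hence $\Delta_{e_{20}}(g_{R_0})=\sigma_{p-1}(y-1)-1$, which is the claim.

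There is no real obstacle here. The only point needing care is evaluating Lemma \ref{lem:symmetric-functions-2} at the shifted argument $y-1$ in (3) and keeping the signs of the translations correct. As a consistency check, the $\sigma_{p-1}$-invariance shows the answer in (3) also equals $\sigma_{p-1}(y)-1$.
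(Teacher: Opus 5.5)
Your proof is correct and follows the paper's argument: item (1) comes from \eqref{eq:Delta-gR0:1} together with \eqref{eq:condition-ai}, and items (2) and (3) come from the translation description \eqref{eq:Delta-actions}, the Leibniz rule, and Lemma \ref{lem:symmetric-functions-2}. In (3) you also check explicitly that the $l$-term vanishes because $\sigma_{p-1}(y-1)=\sigma_{p-1}(y)$, a step the paper uses without stating it.
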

\begin{proof}
We use the description of the operators $\Delta_{e_{i0}}$ on the variables $x_i,y_i$ given in \eqref{eq:Delta-actions}.

The first item follows from \eqref{eq:Delta-gR0:1}.


We have
$
\Delta_{e_{10}}(\sigma_d)=0
$
for $d=p-1,p$, because $\sigma_d$ does not depend on $x_i$.
Hence,
$$
\Delta_{e_{10}}(\sigma_p-l\cdot \sigma_{p-1})=-(\sum_{i=1}^{2p-2}-a_i)\cdot \sigma_{p-1}=-\sigma_{p-1}.
$$
This proves item \eqref{item:lem:Delta-gR0:2}.

Finally, we have $\Delta_{e_{20}}l=0$ and
\begin{align*}
\Delta_{e_{20}} \sigma_p&=\sigma_p(y_1-1,\dots ,y_{2p-2}-1)-\sigma_p(y_1,\dots ,y_{2p-2}) \\
&= -1+\sigma_{p-1}(y_1-1,\dots ,y_{2p-2}-1),
\end{align*}
where the last equality follows from Lemma \ref{lem:symmetric-functions-2}.
This proves item \eqref{item:lem:Delta-gR0:3}.
\end{proof}

Lemma \ref{lem:Delta-gR0} yields the conditions
\begin{align} \label{eq:condition-Deta_ei0-gLi}
\Delta_{e_{10}}(g_{L_1})=-\sigma_{p-1}(y_1,\dots ,y_{2p-2})\qquad \text{and}\qquad \Delta_{e_{20}}(g_{L_2})=\sigma_{p-1}(y_1-1,\dots ,y_{2p-2}-1)-1.
\end{align}
As we have seen above, the second condition is satisfied by 
\begin{align} \label{def:gL2}
g_{L_2}\coloneq \sigma_{p}(y_1,\dots ,y_{2p-2}) .
\end{align}

\begin{lemma} \label{lem:gLi-gRj-conditions}
Let $g_{L_i},g_{R_j}\colon W\to \F_p$ with $0\leq i\leq 2$ and $0\leq j\leq 2p-2$ be polynomial test functions, satisfying \eqref{def:gL0,gR0} and \eqref{def:gL2}.
Then the conclusion of Theorem \ref{thm:existence-of-test-functions-K_3,2p-1} holds true if and only if the following hold:
\begin{enumerate}
\item $\Delta_{e_{0j}} (g_{R_j})=0 $ for all $1\leq j\leq 2p-2$;\label{item:lem:gLi-gRj-conditions:1}
\item $\Delta_{e_{1j}} (g_{R_j})=\Delta_{e_{1j}} (g_{L_1}) $ for all $1\leq j\leq 2p-2$;\label{item:lem:gLi-gRj-conditions:2}
\item $\Delta_{e_{2j}} (g_{R_j})=\sigma_{p-1}(y_1,\dots , \widehat y_j ,\dots ,y_{2p-2}) $ for all $1\leq j\leq 2p-2$;\label{item:lem:gLi-gRj-conditions:3}
\item $\Delta_{e_{10}}(g_{L_1})=-\sigma_{p-1}(y_1,\dots ,y_{2p-2})$. \label{item:lem:gLi-gRj-conditions:4}
\end{enumerate}
\end{lemma}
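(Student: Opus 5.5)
The plan is to enumerate the $3(2p-1)$ edge conditions $\Delta_{e_{ij}}(g_{R_j}-g_{L_i})=\delta_{(i,j),(0,0)}$ of Theorem~\ref{thm:existence-of-test-functions-K_3,2p-1} and rewrite each one, using the choices $g_{L_0}=0$, $g_{R_0}=\sigma_p-l\sigma_{p-1}$ from \eqref{def:gL0,gR0} (with \eqref{eq:condition-ai}) and $g_{L_2}=\sigma_p$ from \eqref{def:gL2}. We sort the edges by whether $j=0$ or $j\geq 1$ and by $i\in\{0,1,2\}$, and compare the resulting conditions with items (1)--(4).

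\emph{Edges at $R_0$.} For $(i,j)=(0,0)$ the condition holds automatically by Lemma~\ref{lem:Delta-gR0}\eqref{item:lem:Delta-gR0:1}. For $(1,0)$ we need $\Delta_{e_{10}}(g_{L_1})=\Delta_{e_{10}}(g_{R_0})=-\sigma_{p-1}$ by Lemma~\ref{lem:Delta-gR0}\eqref{item:lem:Delta-gR0:2}, which is exactly item (4). For $(2,0)$ we need $\Delta_{e_{20}}(g_{L_2})=\Delta_{e_{20}}(g_{R_0})$. By Lemma~\ref{lem:Delta-gR0}\eqref{item:lem:Delta-gR0:3} the right side is $\sigma_{p-1}(y-1)-1$. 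The left side is $\sigma_p(y-1)-\sigma_p(y)$, and this equals the same expression by Lemma~\ref{lem:symmetric-functions-2} applied at the point $y-1$. So this condition holds automatically.

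\emph{Edges at $R_j$ with $j\geq1$.} For $i=0$, since $g_{L_0}=0$ the condition is literally item (1). For $i=1$ the condition $\Delta_{e_{1j}}(g_{R_j}-g_{L_1})=0$ is literally item (2). For $i=2$ we need $\Delta_{e_{2j}}(g_{R_j})=\Delta_{e_{2j}}(\sigma_p)$. By \eqref{eq:Delta-actions}, $\Delta_{e_{2j}}$ acts on the $y$-variables by shifting $y_j$ to $y_j+1$ and fixing the others. Splitting $\sigma_p=y_j\,\sigma_{p-1}(\ldots,\widehat{y_j},\ldots)+\sigma_p(\ldots,\widehat{y_j},\ldots)$, exactly as in \eqref{eq:Delta-sigma_p=hat-sigma-p-1}, gives $\Delta_{e_{2j}}(\sigma_p)=\sigma_{p-1}(y_1,\ldots,\widehat{y_j},\ldots,y_{2p-2})$. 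So this condition is equivalent to item (3).

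These cases exhaust all edges, and each condition is either automatic or equivalent to one of (1)--(4), so both directions of the ``if and only if'' follow. The only step that is not pure bookkeeping is the $(2,0)$ edge. It requires keeping the sign convention straight: $\Delta_{e_{20}}$ shifts $y_i\mapsto y_i-1$, so Lemma~\ref{lem:symmetric-functions-2} has to be applied at the shifted point rather than at $y$. I expect this to be the only place where care is needed.
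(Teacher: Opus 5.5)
Your proposal is correct and is essentially the paper's proof. Both arguments sort the $3(2p-1)$ edge conditions by endpoint and note that the $(0,0)$ and $(2,0)$ conditions hold automatically, by Lemma~\ref{lem:Delta-gR0} and Lemma~\ref{lem:symmetric-functions-2} (applied at the shifted point, as you say). The remaining conditions are then identified with items (1)--(4), the only real computation being $\Delta_{e_{2j}}(\sigma_p)=\sigma_{p-1}(y_1,\dots,\widehat y_j,\dots,y_{2p-2})$.

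Two small remarks. The action of $\Delta_{e_{2j}}$ for $j\geq 1$ comes from $y_j$ being the coordinate dual to $e_{2j}$, not from \eqref{eq:Delta-actions}. You are also right to state \eqref{eq:condition-ai} explicitly: the lemma needs it through Lemma~\ref{lem:Delta-gR0}, although its statement leaves it implicit.
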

\begin{proof}
We first show that the conditions are necessary.
Item \eqref{item:lem:gLi-gRj-conditions:1} follows from $g_{L_0}=0$ and item \eqref{item:lem:gLi-gRj-conditions:2} is clear.
Item \eqref{item:lem:gLi-gRj-conditions:3} follows from the condition
$$
0=\Delta_{e_{2j}} (g_{R_j}-g_{L_2})=\Delta_{e_{2j}} (g_{R_j})-\Delta_{e_{2j}} (\sigma_p(y_1,\dots ,y_{2p-2}))
$$
together with (an obvious variant of) equation \eqref{eq:Delta-sigma_p=hat-sigma-p-1}, which proves
\begin{align*}
\Delta_{e_{2j}} (\sigma_p(y_1,\dots ,y_{2p-2})) 
&=\sigma_{p}(y_1,\dots , y_j+1,\dots ,y_{2p-2})-\sigma_{p}(y_1,\dots ,y_{2p-2}) \\
&=\sigma_{p-1}(y_1,\dots , \widehat y_j ,\dots ,y_{2p-2}) .
\end{align*}
Item \eqref{item:lem:gLi-gRj-conditions:4} follows from \eqref{eq:condition-Deta_ei0-gLi}.

For the converse, assume that item \eqref{item:lem:gLi-gRj-conditions:1}--\eqref{item:lem:gLi-gRj-conditions:4} hold true.
Then $\Delta_{e_{00}}(g_{R_0}-g_{L_0})= 1 $ by Lemma \ref{lem:Delta-gR0}.
Moreover, for $1\leq j\leq 2p-2$, we have $\Delta_{e_{0j}}(g_{R_j}-g_{L_0})=\Delta_{e_{0j}}(g_{R_j})=0 $ by item \eqref{item:lem:gLi-gRj-conditions:1}.
Next, for $1\leq j\leq 2p-2$ and $i=1,2$, we have $\Delta_{e_{ij}}(g_{R_j}-g_{L_i})=0 $ by item \eqref{item:lem:gLi-gRj-conditions:2} and \eqref{item:lem:gLi-gRj-conditions:3}, together with the above computation of $\Delta_{e_{2j}} (g_{L_2})$, where  $g_{L_2}=\sigma_p$ is given by \eqref{def:gL2}.
Finally, $\Delta_{e_{i0}}(g_{R_0}-g_{L_i)})=0$ for $i=1,2$ by \eqref{eq:condition-Deta_ei0-gLi}, \eqref{def:gL2} and item \eqref{item:lem:gLi-gRj-conditions:4}.
This concludes the proof of the lemma.
\end{proof}

For $1\leq j\leq 2p-2$, we define
$$
h_j\coloneq \sigma_{p-1}(y_1,\dots,\widehat y_j,\dots ,y_{2p-2})\in \F_p[y_1,\dots ,y_{2p-2}] .
$$
Item \eqref{item:lem:gLi-gRj-conditions:3} of Lemma \ref{lem:gLi-gRj-conditions} then reads as follows:
$$
\Delta_{e_{2j}} (g_{R_j})=h_j\qquad \text{for all $1\leq j\leq 2p-2$.}
$$
Moreover, $\Delta_{e_{2j}}(h_j)=0$ because $h_j$ does not depend on $y_j$.

Theorem \ref{thm:existence-of-test-functions-K_3,2p-1} is then a consequence of the following:

\begin{lemma}
Let $g_{L_i},g_{R_j}\colon W\to \F_p$ with $0\leq i\leq 2$ and $0\leq j\leq 2p-2$ be the polynomial test functions given by \eqref{def:gL0,gR0}, \eqref{def:gL2},
\begin{align} \label{def:gRj}
g_{R_j}\coloneq (y_j-x_j)h_j\qquad \text{for all $1\leq j\leq 2p-2$,}
\end{align}
and
\begin{align} \label{def:gL1}
g_{L_1}\coloneq -\sum_{j=1}^{2p-2}x_jh_j.
\end{align}
Then the conclusion of Theorem \ref{thm:existence-of-test-functions-K_3,2p-1} holds true.
\end{lemma}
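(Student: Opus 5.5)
The plan is to reduce everything to Lemma \ref{lem:gLi-gRj-conditions}. Since $g_{L_0}$, $g_{R_0}$ are given by \eqref{def:gL0,gR0} with constants $a_i$ satisfying \eqref{eq:condition-ai} (for instance $a_1=-1$ and $a_i=0$ otherwise), and $g_{L_2}$ is given by \eqref{def:gL2}, it suffices to verify items \eqref{item:lem:gLi-gRj-conditions:1}--\eqref{item:lem:gLi-gRj-conditions:4} of that lemma for the choices \eqref{def:gRj} and \eqref{def:gL1}. Throughout I will use the explicit actions \eqref{eq:Delta-actions}: each $\Delta_e$ is the difference operator of a translation in the coordinates $x_i,y_i$. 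I will also use the fact that $h_j$ depends only on the variables $y_k$ with $k\neq j$.

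Items \eqref{item:lem:gLi-gRj-conditions:1}--\eqref{item:lem:gLi-gRj-conditions:3} are direct translation checks.

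For \eqref{item:lem:gLi-gRj-conditions:1}: translation by $e_{0j}$ replaces $x_j,y_j$ by $x_j-1,y_j-1$ and fixes all other coordinates. Hence $y_j-x_j$ is invariant. Also $h_j$ is invariant, since it does not involve $y_j$. So $\Delta_{e_{0j}}(g_{R_j})=0$.

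For \eqref{item:lem:gLi-gRj-conditions:2}: translation by $e_{1j}$ shifts only $x_j$ by $+1$. Since $h_j$ involves no $x$-variables, $\Delta_{e_{1j}}(g_{R_j})=-h_j$. On the other side, in $g_{L_1}=-\sum_k x_kh_k$ only the $k=j$ summand changes, which gives $\Delta_{e_{1j}}(g_{L_1})=-h_j$ as well.

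For \eqref{item:lem:gLi-gRj-conditions:3}: translation by $e_{2j}$ shifts only $y_j$ by $+1$. Since $h_j$ is independent of $y_j$, the Leibniz rule (Lemma \ref{lem:Leibniz-rule}) gives $\Delta_{e_{2j}}(g_{R_j})=\Delta_{e_{2j}}(y_j-x_j)\cdot h_j=h_j$, which is the required identity.

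The only step with real content is item \eqref{item:lem:gLi-gRj-conditions:4}. Translation by $e_{10}$ shifts every $x_i$ by $-1$ and fixes the $y$'s, so
$$
\Delta_{e_{10}}(g_{L_1})=-\sum_{j}(-1)h_j=\sum_{j=1}^{2p-2}\sigma_{p-1}(y_1,\dots,\widehat y_j,\dots,y_{2p-2}).
$$
Now each monomial $y_J$ with $|J|=p-1$ appears in $h_j$ exactly when $j\notin J$. That happens for $(2p-2)-(p-1)=p-1$ values of $j$, so the sum equals $(p-1)\sigma_{p-1}=-\sigma_{p-1}$ in $\F_p$. This is exactly item \eqref{item:lem:gLi-gRj-conditions:4}.

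All four conditions of Lemma \ref{lem:gLi-gRj-conditions} then hold, and that lemma yields the conclusion of Theorem \ref{thm:existence-of-test-functions-K_3,2p-1}. I expect the counting identity $\sum_j h_j=(p-1)\sigma_{p-1}$ in item \eqref{item:lem:gLi-gRj-conditions:4} to be the only point needing care, since it is where the characteristic $p$ enters.
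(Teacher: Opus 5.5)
Your proposal is correct and follows the same route as the paper: you reduce to Lemma \ref{lem:gLi-gRj-conditions} and check its four items using the translation actions \eqref{eq:Delta-actions} (equivalently Lemma \ref{lem:e_ij-2}), the Leibniz rule, and the count $\sum_j h_j=(p-1)\sigma_{p-1}=-\sigma_{p-1}$ in $\F_p$. Your remark that the $a_i$ must satisfy \eqref{eq:condition-ai} (e.g.\ $a_1=-1$ and $a_i=0$ otherwise) is a clarification the paper leaves implicit, and it is harmless.
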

\begin{proof}
It suffices to check items \eqref{item:lem:gLi-gRj-conditions:1}--\eqref{item:lem:gLi-gRj-conditions:4} in Lemma \ref{lem:gLi-gRj-conditions}.
By Lemma \ref{lem:e_ij-2}, we have $e_{0j}=-e_{1j}-e_{2j}$.
Using this, we see that $\Delta_{e_{0j}}(h_j)=0 $ and $\Delta_{e_{0j}}(y_j-x_j)=0 $.
Hence, the Leibniz rule (see Lemma \ref{lem:Leibniz-rule}) implies
$\Delta_{e_{0j}}(g_{R_j})=0$, which gives item \eqref{item:lem:gLi-gRj-conditions:1} in Lemma \ref{lem:gLi-gRj-conditions}. 

Item \eqref{item:lem:gLi-gRj-conditions:2} follows from the fact that the terms of $g_{R_j}$ and $g_{L_1}$ that involve $x_j$ agree.  

Using the Leibniz rule and the fact that $\Delta_{e_{2j}}h_j=0$, we find
$$
\Delta_{e_{2j}}g_{R_j}=\Delta_{e_{2j}}(y_j-x_j)\cdot h_j=h_j.
$$
This proves item \eqref{item:lem:gLi-gRj-conditions:3}.

Finally, to prove item \eqref{item:lem:gLi-gRj-conditions:4}, we use Lemma \ref{lem:e_ij-2}, which gives $e_{10}=-\sum_{j=1}^{2p-2}e_{1j}$.
Using this, we get
$$
\Delta_{e_{10}}(g_{L_1})=-\sum_{j=1}^{2p-2}\Delta_{e_{10}}( x_jh_j) = \sum_{j=1}^{2p-2} h_j  .
$$
The lemma follows now from the fact that 
$$
 \sum_{j=1}^{2p-2} h_j = \sum_{j=1}^{2p-2} \sigma_{p-1}(y_1,\dots,\widehat y_j,\dots ,y_{2p-2})=-\sigma_{p-1}(y_1, \dots ,y_{2p-2}).
$$
Indeed, for a subset $J\subset \{1,2,\dots ,2p-2\}$ of cardinality $|J|=p-1$, the monomial $\prod_{j\in J} y_j$ appears with the following coefficient in the above left hand side:
$$
\# (\{1,2,\dots ,2p-2\}\setminus J)=p-1\equiv -1\mod p.
$$
This concludes the proof of the lemma.
\end{proof}

\section*{Acknowledgments} 
Thanks for comments to O.~de Gaay Fortman.

ChatGPT was used for exploratory discussion, to aid in the construction of test functions, and for accelerating parts of the drafting process.  
No statement or proof was accepted on the basis of AI output; all mathematical content was independently verified by the authors, who take full responsibility for the manuscript.

This project has received funding from the European Research Council (ERC) under the European Union’s Horizon 2020 research and innovation programme under grant agreement No 948066 (ERC-StG RationAlgic).
The research was partly conducted in the framework of the DFG-funded research training group RTG 2965: From Geometry to Numbers, Project number 512730679.

\end{document}